\documentclass[11pt]{amsart}

\usepackage{ytableau}

\usepackage{amsmath, amsxtra, amsthm, amssymb,mathtools,bm,amsfonts}
\usepackage{mathrsfs}
\usepackage[normalem]{ulem}
\usepackage[mathscr]{euscript}
\usepackage{graphicx}
\usepackage{float}
\usepackage{url}
\usepackage{color}
\usepackage{bbm}
\usepackage{tikz-cd}
\usepackage{tikz}

\usepackage[T1]{fontenc}

\usepackage{enumitem}
\usepackage{comment}
\usepackage[pdftex,hidelinks]{hyperref}
\hypersetup{
    colorlinks,
    citecolor=magenta,
    filecolor=magenta,
    linkcolor=blue,
    urlcolor=black
}
\usepackage{cleveref}

\usepackage{xcolor}

\renewcommand{\emptyset}{\varnothing}

\theoremstyle{definition}
\newtheorem{thm}{Theorem}[section]
\newtheorem{cor}[thm]{Corollary}
\newtheorem{lem}[thm]{Lemma}

\newtheorem{prop}[thm]{Proposition}
\newtheorem{defn}[thm]{Definition}

\newtheorem{eg}[thm]{Example}
\newtheorem{rem}[thm]{Remark}

\newtheorem{question}[thm]{Question}
\newtheorem{maintheorem}{Theorem}
\newtheorem{fact}[thm]{Fact}

\numberwithin{equation}{section}

\newcommand{\sym}[1]{\operatorname{Sym}_{#1}} 

\newcommand{\supp}{\operatorname{supp}} 

\newcommand{\suchthat}{\;|\;}

\newcommand{\schub}[1]{\mathfrak{S}_{#1}} 

\newcommand{\des}[1]{\operatorname{Des}(#1)} 

\newcommand{\idem}{\operatorname{id}} 

\newcommand{\tope}[1]{\mathsf{T}_{#1}}
\newcommand{\topeik}[2]{\mathsf{T}_{#1,#2}} 
\newcommand{\rope}[1]{\mathsf{R}_{#1}} 
\newcommand{\ropeik}[2]{\mathsf{R}_{#1,#2}} 

\newcommand{\ltfor}[2][] 
{
{\ifx&#1&%
  {\mathsf{LTFor}_{#2}}
\else
  {\mathsf{LTFor}_{#2}^{#1}}
\fi}
}

\newcommand{\rtfor}[2][] 
{
{\ifx&#1&%
  {\mathsf{RTFor}_{>#2}}
\else
  {\mathsf{RTFor}_{>#2}^{#1}}
\fi}
}

\newcommand{\suppfor}[2][] 
{
{\ifx&#1&%
  {\mathsf{Forest}_{#2}}
\else
  {\mathsf{For}_{#2}^{#1}}
\fi}
}

\newcommand{\fl}[1]{\mathrm{Fl}_{#1}}
\newcommand{\GL}{\mathrm{GL}}

\newcommand{\qscoinv}[2][]{
{\ifx&#1&%
  {\operatorname{QSCoinv}_{#2}}
\else
  {{}^{#1}\!\operatorname{QSCoinv}_{#2}}
\fi}
}

\definecolor{ao}{rgb}{0.0, 0.5, 0.0}

\newcommand{\ev}{\operatorname{ev}}

\renewcommand\emph[1]{\textcolor{blue}{\textit{#1}}} 

\newcommand{\DD}{\operatorname{DD}}

\title[Long range divided differences, clusters, and Graham-positivity]{Long range divided differences, clusters, and Graham-positivity}

\author{Hunter Spink}
\address{Department of Mathematics,
University of Toronto, Toronto, ON M5S 2E4, Canada}
\email{\href{mailto:hunter.spink@utoronto.ca}{hunter.spink@utoronto.ca}}

\author{Vasu Tewari}
\address{Department of Mathematical and Computational Sciences, University of Toronto Mississauga, Mississauga, ON L5L 1C6, Canada}
\email{\href{mailto:vasu.tewari@utoronto.ca}{vasu.tewari@utoronto.ca}}

\thanks{
HS and VT acknowledge the support of the NSERC, respectively [RGPIN-2024-04181] and [RGPIN-2024-05433].}

\newcommand{\xl}{{\bm{x}}}
\newcommand{\tl}{{\bm{t}}}

\newcommand{\gl}[1]{\mathrm{GL}_{#1}}

\begin{document}

\begin{abstract}
We study torus-orbit closures in the type $A$ complete flag variety naturally associated to cones in the positive cluster fan, together with their left $S_n$-translates. The torus-equivariant degree maps can be computed via composites of long-range divided difference operations encoded by noncrossing alternating forests, and we give combinatorial algorithms to expand the torus-equivariant homology classes into Graham-positive combinations of Schubert cycles. As applications we obtain combinatorial Graham-positive Schubert cycle expansions for all torus-invariant curves (generalizing the AJS--Billey formula for torus-fixed points), generic torus-orbit closures, and  Richardson varieties for Bruhat intervals $[w,wc']$ where $c'\le s_{n-1}s_{n-2}\cdots s_1$. Projecting to Grassmannians we also obtain Graham-positive Grassmannian Schubert cycle decompositions of torus-orbit closures associated to lattice path matroids on permuted ground sets.
\end{abstract}

\maketitle

\section{Introduction}

Let $B\subset \gl{n}$ denote the upper triangular matrices, and let $(\mathbb{C}^*)^n\cong T\subset B$ be the diagonal matrices. 
In this paper we establish combinatorial Graham-positivity results \cite{Gra01} for new classes of toric subvarieties of the complete flag variety $\fl{n}\coloneqq \gl{n}/B$ by exploiting the clusters of roots of Fomin--Zelevinsky \cite{FZ03,FZ03b} which generate the cones of the type $A$ cluster fan. These include all $T$-invariant curves in $\fl{n}$, generalizing the classical AJS--Billey expansion \cite{AJS,Bil99} for $T$-invariant points, as well as generic $T$-orbit closures. As a corollary we obtain combinatorial Graham-positivity for new families of toric subvarieties of Grassmannians.

Our results crucially hinge on a pair of miraculous identities (Theorem~\ref{thm:Magic}) for type $A$ long-range divided differences inspired by combinatorial Graham-positivity results for the quasisymmetric flag variety \cite{BGNST1,BGNST2}, which we recurse in a tightly controlled manner. 
We leave as an open question how to isolate the manifestly Graham-positive output of our recursion through non-recursive combinatorial formulas.

\subsection{Combinatorial Graham positivity}
Recall that the Bruhat decomposition decomposes the flag variety into Schubert cells $\mathring{X}^w\coloneqq BwB/B\cong \mathbb{A}^{\ell(w)}$ indexed by $w\in S_n$. The closures are the Schubert cycles $X^w\coloneqq \overline{BwB/B}$, and they give a free $H^\bullet_T(pt)\cong \mathbb{Z}[\tl_n]$-basis for the $T$-equivariant Borel--Moore homology group $H_\bullet^T(\fl{n})$. Graham \cite[Theorem 3.2]{Gra01} showed every $T$-invariant subvariety $X\subset \gl{n}/B$ has its fundamental class in $H_\bullet^T(\fl{n})$ expand, for geometric reasons, \emph{Graham-positively} (under the standard Schubert calculus conventions) as
$$[X]= \sum_{w\in S_n}
f^X_w(\tl_n)[X^w]\text{ with }f^X_w(\tl_n)\in \mathbb{N}[t_2-t_1,\ldots,t_n-t_{n-1}].$$
Even for the simplest $T$-invariant varieties in $\fl{n}$ a manifestly nonnegative combinatorial formula $f^X_w(\tl_n)$ is challenging to determine.
To place our results in context, we recall two cases, one where combinatorial Graham-positivity is known and the other where it is not, illustrating this difficulty.

\begin{enumerate}
    \item 
    The $T$-fixed points of $\fl{n}$ are the permutation matrices, indexed by $u \in S_n$. For the corresponding zero-dimensional subvariety $X = \{u\}\subset\fl{n}$, the coefficient $f^X_w(\tl_n)$ equals the double Schubert polynomial evaluation $\schub{w}(\tl_{u};\tl_n)$ where $\tl_u\coloneqq (t_{u(1)},\ldots,t_{u(n)})$. 
    Graham-positivity in this case is the content of the AJS--Billey formula \cite{AJS, Bil99}, whose standard proof requires intricate combinatorics of the nil-Hecke ring (see also~\cite{GK21}).
    
    \item Let $w_{\circ}$ be the longest element, and let $X_u=w_{\circ}X^u$ denote the opposite Schubert cycle. For $u\le v$ in the Bruhat order, the Richardson variety $X=X^v_u=X^v\cap X_u$ has $f^X_w(\tl_n)=a^v_{w,u}(\tl)$, the structure constants of double Schubert polynomial multiplication,
$$\schub{w}(\xl;\tl)\,\schub{u}(\xl;\tl)=\sum a^v_{w,u}(\tl)\,\schub{v}(\xl;\tl),$$
which is famously open to show combinatorial nonnegativity even after setting $\tl\mapsto 0$, giving the structure constants of ordinary Schubert polynomial multiplication. 
Special cases of positivity, both ordinary and Graham, are known; see~\cite{CFMY26, GZ26, Hua23,  KT03, KY04, KZthree, KN01, PW24, Sot96} for a non-exhaustive sample.
To emphasize a contrast with our results, we note that these special cases give a single coefficient $a^v_{w,u}$ for many triples $(u,v,w)$, rather than \textit{all} coefficients of one Richardson variety $X^v_u$. Our results are of the latter kind.
\end{enumerate}

A useful variant of the Graham-positivity question involves the coproduct map on homology, which is dual to the product on cohomology. Concretely, consider the diagonal map $\Delta:\fl{n}\hookrightarrow \fl{n}^2$. Then there is a pushforward map on $T$-equivariant Borel--Moore homology
$$\Delta_*:H_\bullet^T(\fl{n})\to H_\bullet^T(\fl{n}^2)\cong H_\bullet^T(\fl{n})\otimes_{\mathbb{Z}[\tl_n]}H_\bullet^T(\fl{n}).$$
Combining \cite[Theorem 3.2]{Gra01} with the Graham-positivity of the coefficients $a^v_{w,u}(\tl)$ above, for any $T$-invariant subvariety $X\subset \fl{n}$ there is a Graham-positive expansion into the product Schubert cycle basis
$$\Delta_*[X]=\sum f^X_{w,w'}(\tl_n)[X^w]\otimes [X^{w'}]\text{ with }f^X_{w,w'}(\tl_n)\in \mathbb{N}[t_2-t_1,\ldots,t_n-t_{n-1}].$$
Similarly to above, combinatorializing these coefficients is challenging even in the simplest cases. For $X=X^v$, there is an equality $f^X_{w,u}(\tl_n)=a^v_{w,u}(\tl)$ with the generalized Littlewood--Richardson coefficient.
\subsection{Combinatorial Graham positivity associated to clusters}
The \emph{positive cluster fan} is a fan subdividing the positive root cone $\mathbb{R}_{\ge 0}\Phi^+$ whose cones are those collections of positive roots which are contained inside some Fomin--Zelevinsky cluster. 
Identifying a positive root $e_i-e_j$ with the pair $(i,j)$, a cone in this fan is exactly the data of the spreads of a \emph{noncrossing alternating forest (NAF)} $F$ as defined by Gelfand--Graev--Postnikov \cite{GGP97}. See the left panel of Figure~\ref{fig:ncaf_lrdd}. \begin{figure}[!ht]
    \centering
    \includegraphics[scale=0.8]{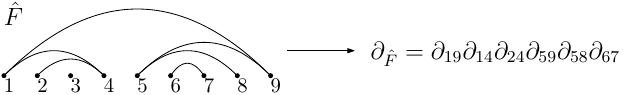}
    \caption{A noncrossing alternating forest $F$ (left) and the associated composite of long-range divided differences $\partial_{F}$ (right).}
    \label{fig:ncaf_lrdd}
\end{figure}

On the other hand, there is an action of $S_n$ on the ring $\mathbb{Z}[\tl_n][\xl_n]$ by permuting the $x$-variables $f(\xl_n;\tl_n)\mapsto f(\xl_w;\tl_n)$ where $\xl_w\coloneqq (x_{w(1)},\ldots,x_{w(n)})$. For a positive root $\alpha=e_i-e_j\in \Phi^+$, the associated \emph{long-range divided difference operation} is defined by $\partial_\alpha=\frac{f-(i,j)\cdot f}{x_i-x_j}$. For $j=i+1$ this is the divided difference operation $\partial_i$ used in the recursive definition of double Schubert polynomials. The primary combinatorial object of study for us is the product $$\partial_{F}\coloneqq \prod_{e_i-e_j\in F}\partial_{ij}$$
taken so that if $[i,j]\subset [k,\ell]$ then $\partial_{ij}$ appears after $\partial_{k\ell}$, see the right panel of Figure~\ref{fig:ncaf_lrdd}. 
 (This product is well defined: arcs of a NAF that are incomparable under containment have disjoint supports, so the corresponding operators commute.)
\begin{maintheorem}
\label{thm:mainA}
\leavevmode
\begin{enumerate}
    \item The expression $(\partial_{F}\schub{w})(\tl_{\sigma};\tl_n)$ is combinatorially Graham-positive.
    \item There is an expansion
    $$\partial_F(fg)(\tl_{\sigma};\tl_n)=\sum_{F',F''}f^{F,F',F''}_{\sigma,\sigma',\sigma''}(\tl_n)(\partial_{F'}f)(\tl_{\sigma'};\tl_n)(\partial_{F''}g)(\tl_{\sigma''};\tl_n)$$
    with each $f^{F,F',F''}_{\sigma,\sigma',\sigma''}(\tl_n)$ combinatorially Graham-positive.
\end{enumerate}
\end{maintheorem}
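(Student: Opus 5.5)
The plan is a recursion on the noncrossing alternating forest $F$: peel off one outermost arc at a time, using the two identities of Theorem~\ref{thm:Magic} as the recursive step. The base case $F=\emptyset$ of part (1) is the AJS--Billey formula for $\schub{w}(\tl_\sigma;\tl_n)$. The base case of part (2) is the Leibniz rule for evaluation: $(fg)(\tl_\sigma;\tl_n)=f(\tl_\sigma;\tl_n)\,g(\tl_\sigma;\tl_n)$, with coefficient $1$.

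For the inductive step, order the arcs of $F$ so that containing arcs act first. Let $(k,\ell)$ be an outermost arc and $F_0=F\setminus\{(k,\ell)\}$. Write
$$\partial_F=\partial_{F_0'}\,\partial_{F_0''}\,\partial_{k\ell},$$
where $F_0''$ consists of the arcs nested inside $[k,\ell]$ and $F_0'$ of the arcs disjoint from it. These commute by the noncrossing property. The key point is to rewrite the evaluation of $\partial_{k\ell}h$ at $\tl_\sigma$ in terms of evaluations of iterated operators on smaller forests at other permutations.

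Concretely, Theorem~\ref{thm:Magic} should give two things.
\begin{itemize}
\item An expansion of $\partial_{k\ell}\schub{w}$, or of $\partial_{k\ell}$ composed with evaluation, as a positive combination of terms $\partial_{F_1}\schub{w'}$ evaluated at $\tl_{\sigma'}$. Here the coefficients lie in $\mathbb{N}[t_{i+1}-t_i]$ and $F_1$ has strictly fewer arcs, or is "smaller" in a suitable order. I expect long-range differences to factor through chains of simple reflections, as in the quasisymmetric-flag arguments of \cite{BGNST1,BGNST2}.
\item A twisted Leibniz rule
$$\partial_{k\ell}(fg)=\sum (\partial_{F'}f)^{\tau'}(\partial_{F''}g)^{\tau''}\cdot(\text{positive coefficient}).$$
The classical rule $\partial_{ij}(fg)=(\partial_{ij}f)g+((i,j)f)\,\partial_{ij}g$ is the prototype, but its second term is not of the required form after evaluation. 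The magic identities must repair this by redistributing the permutation twist $(i,j)$ into the $\sigma'$ and $\sigma''$ labels.
\end{itemize}

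Given these, I induct on the number of arcs, or on a weight combining arcs and the length of $\sigma$. Apply the identity to the outer arc. Note that the remaining operators $\partial_{F_0'}\partial_{F_0''}$ act on the new summands, and the resulting composite is again $\partial_{F^*}$ for a NAF $F^*$. This closure check is a combinatorial lemma: noncrossing alternation must be preserved when arcs are added inside $[k,\ell]$ or relabeled by the twist. Then apply induction.

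For part (2), the step coefficients multiply, since each is Graham-positive. Nested evaluations at different permutations combine because $\partial_{F'}$ acts only on $f$.

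The main obstacle is the step justifying that the recursion terminates while staying inside the class of NAFs and permutations. The identities of Theorem~\ref{thm:Magic} may produce terms with the same number of arcs but a different $\sigma$, or arcs whose supports cross the remaining ones. I would first try a lexicographic potential: number of arcs, then total span of arcs, then Bruhat length of $\sigma$ relative to the arc endpoints. I would check that every term produced by Theorem~\ref{thm:Magic} strictly decreases it. Failing that, I would restrict to applying the identity only to innermost arcs, after commuting them to act first on $f$. This is legitimate since the nested operators act after outer ones, so rewriting must proceed from the inside of the product outward in evaluation order.
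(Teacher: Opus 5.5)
There is a genuine gap in part (1). The recursive step is the entire content of that part, and you only assert it (``Theorem~\ref{thm:Magic} should give two things''). The concrete choices you do make break down:

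\begin{enumerate}
\item The order is backwards. Nested arcs stand to the right of the arcs containing them, so an outermost $\partial_{k\ell}$ is applied last. It does not commute with nested arcs sharing an endpoint: $\partial_{13}\partial_{12}=s_1\partial_2\partial_1$, whereas $\partial_{12}\partial_{13}=\partial_2\partial_1-\partial_1\partial_2-\alpha_2\,\partial_1\partial_2\partial_1$, which is not even Graham-positive.
\item With the correct order, applying Theorem~\ref{thm:Magic} to the outer arc inserts $\alpha,\beta$ to the left of the nested arcs. Your closure lemma is then false as stated. For $F=\{(1,4),(1,2),(3,4)\}$, splitting $(1,4)$ at $j=3$ with $\alpha=(1,3)$ produces $\partial_{14}\partial_{13}\partial_{12}\partial_{34}$, whose arcs contain a cycle.
\item Your potential cannot decrease. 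The last term of each identity has one more arc than its input.
\item You never say how the sign of the coefficient is controlled. The two identities carry $+\beta$ and $-\beta$. The coefficient must be pushed to the far left of the word, past the remaining divided differences and the permutation, before you can choose between them. Commuting a root past a $\partial_\delta$ generally creates Leibniz terms $-\langle\delta,\beta\rangle$ of either sign.
\end{enumerate}

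The missing idea is where and how to apply the identity. The paper tracks terms $u'\partial^{\omega'}_{G'}\partial_{w'}$, where $G'$ is a noncrossing forest with valid ordering and $\partial_{w'}$ is a trailing reduced word. It applies Theorem~\ref{thm:Magic} only to the last arc $\gamma'$ of $\omega'$, the one acting first on the function. This arc is minimal and has a free endpoint, and the split $\gamma'=\alpha'+\beta'$ is chosen so that $\beta'$ contains that free endpoint. Then $\beta'\perp G'\setminus\{\gamma'\}$ by \Cref{prop:closure}. The coefficient therefore passes the remaining operators unchanged and becomes $(\beta')^{u'}$ after passing $u'$. One uses whichever identity makes $\pm(\beta')^{u'}$ a positive root; this is exactly why the two identities carry opposite signs. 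The rest of the argument runs as follows:
\begin{itemize}
\item the $\tau$'s are moved left by \eqref{eqn:conjrelabel};
\item \Cref{prop:closure} guarantees $C_1,\dots,C_4$ are again validly ordered noncrossing forests;
\item simple last arcs are absorbed into $\partial_{w'}$, and non-reduced words vanish;
\item termination uses the lexicographic pair $(\tbinom{n}{2}-\ell(w'),\operatorname{ht}(\gamma'))$, not the number of arcs;
\item AJS--Billey (\Cref{th:ajs-billey}) is applied to each final $u'$.
\end{itemize}
Your innermost-arc fallback points in this direction, but without these ingredients it is not yet an argument.

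For part (2) you misdiagnose the difficulty. The second Leibniz term is already of the required form, since $\ev_\sigma\tau_\gamma=\ev_{\sigma\tau_\gamma}$. No identity from Theorem~\ref{thm:Magic} and no positivity input is needed. Multiplicativity of $\Delta$ together with \eqref{eqn:longrangeleibniz} gives $\Delta(\partial_F)=\sum_S L_S\otimes R_S$ with $R_S=\partial_{F|_S}$. In $L_S$ the interleaved $\tau$'s are pulled to the left via \eqref{eqn:conjrelabel}, and the relabeled arcs form a noncrossing forest with valid ordering. \Cref{thm:noncrossingequivtoNAF} then rewrites $L_S=u\,\partial_{F'}$, and every coefficient is $1$.
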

\begin{cor} $\partial_F(\schub{w_1}\cdots \schub{w_k})(\tl_{\sigma};\tl_n)$ is combinatorially Graham-positive for any $w_1,\ldots,w_k\in S_n$.
\end{cor}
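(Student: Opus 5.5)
Induct on $k$, using Theorem~\ref{thm:mainA}(2) to peel off one factor at a time and Theorem~\ref{thm:mainA}(1) at the leaves. The case $k=1$ is exactly Theorem~\ref{thm:mainA}(1).

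For $k\ge 2$, set $f=\schub{w_1}$ and $g=\schub{w_2}\cdots\schub{w_k}$. Theorem~\ref{thm:mainA}(2) gives
$$\partial_F(\schub{w_1}\cdots\schub{w_k})(\tl_\sigma;\tl_n)=\sum_{F',F''}f^{F,F',F''}_{\sigma,\sigma',\sigma''}(\tl_n)\,(\partial_{F'}\schub{w_1})(\tl_{\sigma'};\tl_n)\,(\partial_{F''}(\schub{w_2}\cdots\schub{w_k}))(\tl_{\sigma''};\tl_n).$$
The terms are controlled as follows:
\begin{itemize}
\item each coefficient $f^{F,F',F''}_{\sigma,\sigma',\sigma''}$ is combinatorially Graham-positive by Theorem~\ref{thm:mainA}(2);
\item the second factor is combinatorially Graham-positive by Theorem~\ref{thm:mainA}(1), applied with the NAF $F'$ and permutation $\sigma'$;
\item the third factor is combinatorially Graham-positive by the inductive hypothesis, applied with the NAF $F''$ and permutation $\sigma''$ to the $k-1$ Schubert polynomials $\schub{w_2},\ldots,\schub{w_k}$.
\end{itemize}
For the inductive hypothesis to apply, the statement must be quantified over all NAFs and all $\sigma\in S_n$, not just the original $F$ and $\sigma$. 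The induction is therefore set up in that generality from the start.

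It remains to note that $\mathbb{N}[t_2-t_1,\ldots,t_n-t_{n-1}]$ is closed under sums and products. A product or sum of combinatorially Graham-positive expressions is again combinatorially Graham-positive, since expanding the products of the manifestly positive expressions term by term gives a manifestly positive formula. This completes the induction.

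The only point needing care is bookkeeping rather than mathematics. The indexing sets of $(F',F'')$ and $(\sigma',\sigma'')$ in Theorem~\ref{thm:mainA}(2) must be finite and explicit, so that the iterated expansion is a finite, combinatorially described sum, namely a sum over chains of such splittings. Assuming this is part of the content of Theorem~\ref{thm:mainA}(2), no further obstacle arises.
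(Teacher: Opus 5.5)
Your proof is correct and is essentially the paper's argument. The paper proves the $k=2$ case (Corollary~\ref{cor:productpositive}) by expanding $\Delta(\sigma\partial_F)$ via Theorem~\ref{thm:coproduct} and applying Corollary~\ref{cor:partialFevalpos} to each factor, and your induction over all NAFs and all $\sigma$ is the iteration of that step; your bookkeeping concern is settled there too, since the sum runs over subsets $S\subset F$, every coefficient equals $1$, and both output forests $G^{(1)}_{F,S}$, $G^{(2)}_{F,S}$ are NAFs.
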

Our second main theorem interprets this result geometrically. Consider the \emph{cluster chart} $U_{F}^-\subset U^-$ within the lower unitriangular matrices as defined in \cite{BGNST2}, the coordinate subgroup of $U^-$ indexed by the negations of the roots in $F$.
For the NAF in \Cref{fig:ncaf_lrdd} we have
$$U_F^-\coloneqq \begin{bmatrix}
1&0&0&0&0&0&0&0&0\\
0&1&0&0&0&0&0&0&0\\
0&0&1&0&0&0&0&0&0\\
\ast&\ast&0&1&0&0&0&0&0\\
0&0&0&0&1&0&0&0&0\\
0&0&0&0&\ast&1&0&0&0\\
0&0&0&0&0&0&1&0&0\\
0&0&0&0&\ast&0&0&1&0\\
\ast&0&0&0&\ast&0&0&0&1\\\end{bmatrix}B\subset \GL_9/B.$$
Because $T$ acts on the coordinates in $U_F^-$ by the negatives of the roots in $F$ and $F$ consists of linearly independent roots, $T$ acts transitively away from the coordinate hyperplanes of $U_F^-$ which makes the closure $Y(F)\subset \fl{n}$ a toric variety.
\begin{maintheorem}
\label{thm:mainB}
    For a NAF $F\subset \Phi^+$ let $Y(F)$ denote the closure of $U_F^-$. \begin{enumerate}
        \item The Schubert cycle expansion of $[\sigma\cdot Y(F)]$ is given by $$[\sigma\cdot Y(F)]=\sum_{w\in S_n}(\partial_F\schub{w})(\tl_{\sigma};\tl_n)[X^w]\in H_\bullet^T(\fl{n}).$$
        Hence the coefficients in this expansion are combinatorially Graham-positive.
        \item Let $\Delta:\fl{n}\hookrightarrow \fl{n}^2$ be the diagonal inclusion. 
        Then  there is an expansion
        $$\Delta_\bullet[\sigma\cdot Y(F)]=\sum_{F',F''}f^{F,F',F''}_{\sigma,\sigma',\sigma''}(\tl_n)[\sigma'\cdot Y(F')]\otimes[\sigma''\cdot Y(F'')]\in H^\bullet_T(\fl{n}^2)$$
        where $f^{F,F',F''}_{\sigma,\sigma',\sigma''}(\tl_n)$ are as in \Cref{thm:mainA}, and are therefore combinatorially Graham-positive.
    \end{enumerate} 
\end{maintheorem}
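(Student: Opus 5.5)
The plan is to reduce both parts of \Cref{thm:mainB} to \Cref{thm:mainA} through a localization/equivariant-degree description of the Schubert coefficients. Part (1) is the geometric heart; part (2) then follows formally.

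\textbf{The coefficient functional.} For a $T$-invariant subvariety $X\subset\fl{n}$, the coefficient of $[X^w]$ in $[X]$ equals the equivariant degree $\int_X \schub{w}(\xl;\tl)$. Here $\schub{w}(\xl;\tl)$ is the double Schubert polynomial representing the opposite Schubert class, which is dual to $[X^w]$ under the equivariant Poincaré pairing. Two consequences follow.

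\begin{itemize}
\item Left translation by $\sigma$ acts on equivariant cohomology by permuting the fixed-point labels. So $\int_{\sigma Y(F)} f$ is obtained from $\int_{Y(F)}$ by the substitution that turns $\tl$-evaluations at $u$ into evaluations at $\sigma u$.
\item It therefore suffices to prove the identity of $\mathbb{Z}[\tl_n]$-linear functionals
$$\int_{\sigma\cdot Y(F)} f(\xl;\tl)=(\partial_F f)(\tl_\sigma;\tl_n)\qquad\text{for all } f\in\mathbb{Z}[\tl_n][\xl_n].$$
\end{itemize}

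\textbf{Proving the identity by induction on $|F|$.}

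\emph{Base case.} When $F=\emptyset$, $Y(F)$ is the point $B/B$. The fixed point $\sigma$ restricts $x_i\mapsto t_{\sigma(i)}$, which gives exactly $f(\tl_\sigma;\tl_n)$.

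\emph{Inductive step.} Let $e_i-e_j$ be an outermost arc of $F$, so that $\partial_{ij}$ is applied first, and set $F_0=F\setminus\{e_i-e_j\}$. I would exhibit $Y(F)$ as the image of a $\mathbb{P}^1$-fibration, or a $\mathbb{P}^1$-bundle-like correspondence, over $Y(F_0)$, birational onto its image. The two torus-fixed sections would correspond to $\sigma$ and $\sigma\cdot(i,j)$.

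The equivariant degree along a $T$-invariant $\mathbb{P}^1$ with tangent weight $\pm(t_{\sigma(i)}-t_{\sigma(j)})$ is, by Atiyah--Bott,
$$\frac{g(\sigma)-g(\sigma(i,j))}{t_{\sigma(i)}-t_{\sigma(j)}}.$$
This is precisely $\partial_{ij}$ evaluated after the substitution $\xl\mapsto\tl_\sigma$.

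\emph{Main obstacle.} The hard step is making the fibration rigorous. The arc $(i,j)$ is long range, not a simple transposition, so there is no minimal parabolic $P_i/B$ to use directly. Here I would invoke the miraculous identities \Cref{thm:Magic}, presumably the algebraic shadow of a factorization $U_F^-=U_{\alpha}^-\cdot U_{F_0}^-$ in which conjugation by the root subgroup for $-(e_i-e_j)$ preserves $U_{F_0}^-$. The noncrossing and alternating conditions ensure that the nested arcs commute appropriately.

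As a fallback, I would compute both sides by full localization: $\int_{Y(F)}f=\sum_u f(\tl_u)/e_u$ over the fixed points of the smooth toric chart closure. I would then check that the iterated long-range differences reproduce these Atiyah--Bott sums, with \Cref{thm:Magic} supplying the needed telescoping.

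\textbf{Part (2).} Apply the identity to the product $fg$ with $f=\schub{w}$ and $g=\schub{w'}$, which are the pairing duals of $[X^w]\otimes[X^{w'}]$ in $\fl{n}^2$. Then $\int_{\Delta_*[\sigma Y(F)]} f\otimes g=\partial_F(fg)(\tl_\sigma;\tl_n)$. Expanding this by \Cref{thm:mainA}(2) and applying part (1) to each factor $(\partial_{F'}f)(\tl_{\sigma'};\tl_n)=\int_{\sigma'Y(F')}f$ gives the claimed decomposition. This holds because classes in $H^T_\bullet(\fl{n}^2)$ are determined by their pairings against the basis $\schub{w}\otimes\schub{w'}$.

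Graham-positivity of the coefficients in both parts is then inherited directly from \Cref{thm:mainA}.
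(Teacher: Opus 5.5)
Your reduction of part (1) to the functional identity $\int^T_{\sigma Y(F)}f=(\partial_Ff)(\tl_\sigma;\tl_n)$ is exactly the paper's \Cref{thm:degofsigmaYF}. Your base case is correct. Your derivation of part (2) from part (1) and \Cref{thm:mainA}(2), by pairing $\Delta_*[\sigma Y(F)]$ against $\schub{w}\otimes\schub{w'}$, is the dual form of the paper's coproduct argument. All of that is fine.

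The gap is the inductive step of part (1). It carries the whole geometric content, and you leave it unproved.
\begin{itemize}
\item \Cref{thm:Magic} is irrelevant here. Those identities rewrite $\partial_{\alpha+\beta}$ to obtain positivity and say nothing about the geometry of $Y(F)$.
\item The condition you guess, that conjugation by $U_{-\gamma}$ preserves $U^-_{F_0}$, is automatic: root subgroups of non-bumping arcs commute. It is not what is needed.
\item Your picture is oriented the wrong way. A $\mathbb{P}^1$-bundle over (the image of) $Y(F_0)$ with two fixed sections is the BGG situation. It produces $\int_{Y(F_0)}\partial f$, i.e.\ an operator appended on the right of $\partial_{F_0}$ (an innermost arc), and such bundles exist only for simple roots. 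Your formula $(g(\sigma)-g(\sigma\tau_\gamma))/(t_{\sigma(i)}-t_{\sigma(j)})$ with $g(u)=\int^T_{uY(F_0)}f$ instead needs a left sweep, i.e.\ a $Y(F_0)$-bundle over $\mathbb{P}^1$.
\item The localization fallback is not a proof either. Smoothness of $Y(F)$, its fixed points and its tangent weights are all unestablished.
\end{itemize}

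The paper avoids long-range arcs geometrically altogether.
\begin{itemize}
\item It reduces to $\sigma=\idem$ via \eqref{eqn:Snhomology}.
\item Any vertex $i\notin\supp F$ is deleted using the Bergeron--Sottile embedding $\Psi_i^-$, with $\int^T_{\Psi_i^-Z}f=\int^{T'}_Z\rope{i}^-f$.
\item Once every vertex is used, a minimal arc is forced to be simple, $\gamma=e_i-e_{i+1}$. Then $Y(F)=\pi_i^*\pi_{i*}Y(F\setminus\gamma)$, and BGG gives $\int^T_{Y(F)}f=\int^T_{Y(F\setminus\gamma)}\partial_if$.
\end{itemize}

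Your outermost-arc route can also be completed, but it needs a fact you did not isolate.
\begin{itemize}
\item Let $\gamma=(i,j)$ be outermost and $g\in U^-_{F_0}$. Non-bumping gives $(g-I)^2=0$, so $g^{-1}(I+cE_{ij})g=I+c\,(g^{-1}e_i)(e_j^{\top}g)$.
\item The rank-one correction is supported in rows $\{i\}\cup\{b:(i,b)\in F_0\}$ and columns $\{j\}\cup\{a:(a,j)\in F_0\}$. Outermostness gives $i<a$ and $b<j$. Since $(i,b)$ and $(a,j)$ neither cross nor bump, $b<a$.
\item So the correction is strictly upper triangular, and $U_\gamma$ fixes $U^-_{F_0}B/B$, hence $Y(F_0)$, pointwise.
\item Take $P_\gamma=\langle T,U_{\pm\gamma}\rangle\supset B_\gamma=TU_\gamma$. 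The map $P_\gamma\times^{B_\gamma}Y(F_0)\to Y(F)$ is birational, being injective over $U_{-\gamma}\times U^-_{F_0}$.
\item Localize on the base $P_\gamma/B_\gamma\cong\mathbb{P}^1$, whose fibres over the two fixed points map to $Y(F_0)$ and $\tau_\gamma Y(F_0)$. This gives $\int^T_{Y(F)}f=\bigl(\int^T_{Y(F_0)}f-\int^T_{\tau_\gamma Y(F_0)}f\bigr)/(t_i-t_j)$.
\item Combined with induction over all translates and \eqref{eqn:Snhomology}, this is exactly your recursion.
\end{itemize}
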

\begin{cor}
    Under the inclusion $\Delta^{(k)}:\fl{n}\hookrightarrow \fl{n}^k$, the pushforward $\Delta^{(k)}_\bullet[\sigma\cdot Y(F)]$ decomposes as a combinatorially Graham-positive linear combination of the basis of product cycles $[X^{w_1}]\otimes \cdots \otimes [X^{w_k}]$ for $H^\bullet_T(\fl{n}^k)$. 
\end{cor}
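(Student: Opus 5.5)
We derive the corollary from Theorem~\ref{thm:mainB}(2) by induction on $k$. The base case $k=1$ is Theorem~\ref{thm:mainB}(1), which expands $[\sigma\cdot Y(F)]$ in Schubert cycles with coefficients $(\partial_F\schub{w})(\tl_\sigma;\tl_n)$; these are combinatorially Graham-positive by Theorem~\ref{thm:mainA}(1).

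For $k\ge 2$, factor the diagonal as
\[
\Delta^{(k)}=(\Delta\times \idem^{k-2})\circ\Delta^{(k-1)},
\]
where $\Delta\times\idem^{k-2}$ doubles the first factor of $\fl{n}^{k-1}$. Equivalently, factor it as $(\idem\times\Delta^{(k-1)})\circ\Delta$.

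We use the second factorization. Theorem~\ref{thm:mainB}(2) gives
\[
\Delta_\bullet[\sigma\cdot Y(F)]=\sum f^{F,F',F''}_{\sigma,\sigma',\sigma''}(\tl_n)\,[\sigma'\cdot Y(F')]\otimes[\sigma''\cdot Y(F'')].
\]
Pushforward along $\idem\times\Delta^{(k-1)}$ is $H^\bullet_T(pt)$-linear. It acts on an exterior product $[A]\otimes[B]$ as $[A]\otimes\Delta^{(k-1)}_\bullet[B]$, by functoriality of pushforward for products of proper maps and the Künneth isomorphism over $\mathbb{Z}[\tl_n]$ (Schubert cycles are a free basis).

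By induction, $\Delta^{(k-1)}_\bullet[\sigma''\cdot Y(F'')]$ is a combinatorially Graham-positive combination of $[X^{w_2}]\otimes\cdots\otimes[X^{w_k}]$. By the base case, $[\sigma'\cdot Y(F')]$ is a combinatorially Graham-positive combination of $[X^{w_1}]$.

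Substituting both expansions, the coefficient of $[X^{w_1}]\otimes\cdots\otimes[X^{w_k}]$ is a finite sum of products of three factors:
\begin{itemize}
\item the combinatorially Graham-positive $f^{F,F',F''}_{\sigma,\sigma',\sigma''}$,
\item the coefficient $(\partial_{F'}\schub{w_1})(\tl_{\sigma'};\tl_n)$,
\item the inductive coefficients for $F''$.
\end{itemize}
The semiring $\mathbb{N}[t_2-t_1,\ldots,t_n-t_{n-1}]$ is closed under sums and products, and the combinatorial witnesses (sets of objects weighted by monomials in simple roots) compose by taking products and disjoint unions. So the coefficient is combinatorially Graham-positive.

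The only step needing care is checking that $(\idem\times\Delta^{(k-1)})_\bullet$ respects the tensor decomposition $H^T_\bullet(\fl{n}^k)\cong H^T_\bullet(\fl{n})\otimes_{\mathbb{Z}[\tl_n]}H^T_\bullet(\fl{n}^{k-1})$. This follows because cross products of classes of $T$-invariant subvarieties are classes of their products, and pushforward commutes with cross product. Alternatively, the corollary can be read off algebraically: by Theorem~\ref{thm:mainA}(2), iterated expansion of $\partial_F(\schub{w_1}\cdots\schub{w_k})$ mirrors the same recursion, so both routes give the same positive formula.
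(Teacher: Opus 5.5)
Your argument is correct and is essentially the paper's implicit proof. The paper iterates the coproduct expansion of Theorem~\ref{thm:mainB}(2), which is exactly your factorization $\Delta^{(k)}=(\idem\times\Delta^{(k-1)})\circ\Delta$, expands each tensor factor via Theorem~\ref{thm:mainB}(1), and by Theorem~\ref{thm:coproduct} all the intermediate coefficients are in fact $1$. Equivalently, by duality the coefficient of $[X^{w_1}]\otimes\cdots\otimes[X^{w_k}]$ is $\partial_F(\schub{w_1}\cdots\schub{w_k})(\tl_{\sigma};\tl_n)$, which is the corollary stated after Theorem~\ref{thm:mainA}.
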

The second part of the above geometric result implies that the $\mathbb{Z}[\tl_n]$-submodule
$$H_\bullet^T(\fl{n})_{NAF}\coloneqq \operatorname{Span}_{\mathbb{Z}[\tl_n]}\{[\sigma\cdot Y(F)]\suchthat \sigma\in S_n\text{ and }F \text{ a NAF}\}\subset H_\bullet^T(\fl{n})$$ is in fact a sub-coalgebra of $H_\bullet^T(\fl{n})$, and it  exhibits a surprising Graham-positivity property with respect to this generating set. We believe this coalgebra is worth further study, and leave as an open question the determination of a natural $\mathbb{Z}[\tl_n]$-basis (\Cref{question:coalgebra}).

\subsection{Applications}
We now come to applications of our main results.
\begin{maintheorem}
\label{thm:main1}The following have combinatorially Graham-positive expansions in $\fl{n}$
\begin{enumerate}[label=(\arabic*)]
    \item \label{it1} All $T$-invariant curves. These are the unique curves $w\cdot \mathcal{C}_{ij}$ connecting $w$ to $w(i,j)$.
    \item \label{it3} The Coxeter Richardson varieties $X^{wc'}_w$ for $c'=s_{i_1}s_{i_{2}}\cdots s_{i_k}$  with $i_1>i_2>\cdots>i_k$ and $\ell(wc')=\ell(w)+\ell(c')$, together with their $S_n$-translates $uX^{wc'}_w$.
    \item \label{it4} Generic $T$-orbit closures $\overline{T\cdot M}$.
\end{enumerate}
\end{maintheorem}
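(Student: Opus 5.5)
The strategy is to reduce each of the three families to the classes $[\sigma\cdot Y(F)]$ of \Cref{thm:mainB}. By \Cref{thm:mainB}(1), these have combinatorially Graham-positive Schubert expansions with coefficients $(\partial_F\schub{w})(\tl_\sigma;\tl_n)$, and these coefficients are positive by \Cref{thm:mainA}(1). Two mechanisms are available. Either the variety in question is literally a translate $\sigma\cdot Y(F)$ for a suitable NAF $F$. Or its class is a Graham-positive $\mathbb{Z}[\tl_n]$-combination of such translates. The second situation can arise via an intersection or pullback, using the diagonal coproduct in \Cref{thm:mainB}(2). In either case positivity is inherited.

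\textbf{Curves.} The $T$-invariant curve through $w$ and $w(i,j)$ is $w\cdot \overline{U_{-\alpha}B/B}$ with $\alpha=e_i-e_j$. The single arc $\{(i,j)\}$ is a NAF, and $U^-_{\{(i,j)\}}$ is the root subgroup $U_{-\alpha}$. Hence $w\cdot\mathcal{C}_{ij}=w\cdot Y(\{(i,j)\})$. \Cref{thm:mainB}(1) then gives $[w\cdot\mathcal C_{ij}]=\sum_v (\partial_{ij}\schub v)(\tl_w;\tl_n)[X^v]$, and positivity follows from \Cref{thm:mainA}(1). The only check needed is that the orbit closure of a one-parameter root subgroup equals the closure of $U_{-\alpha}B/B$, which holds because it is a $\mathbb{P}^1$.

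\textbf{Coxeter Richardson varieties.} Write $c'=s_{i_1}\cdots s_{i_k}$ with $i_1>\cdots>i_k$. The plan is to show that $X^{wc'}_w$, after translating by a suitable $u$, is $\sigma\cdot Y(F)$ for an explicit NAF $F$. For decreasing products of simple reflections, I expect $F$ to consist of arcs $(a,b+1)$, one for each maximal run $s_b s_{b-1}\cdots s_a$ of consecutive indices in $c'$, nested or disjoint accordingly. Here is how I would establish this.
\begin{enumerate}[label=(\roman*)]
\item Translate by $w^{-1}$; this is harmless because $S_n$-translates are allowed in the conclusion.
\item Verify that $X^{c'}_e$, or its translate $w^{-1}X^{wc'}_w$, is a toric variety of dimension $k$. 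This holds because the Bruhat interval $[e,c']$ is Boolean for a Coxeter element of a parabolic.
\item Identify its dense torus orbit with a translate of $U^-_F$ by computing the $T$-weights at a fixed point.
\end{enumerate}
Step (iii) is delicate: the weights at the fixed point $w$ must be linearly independent and must form a cluster cone. Since both varieties are normal toric varieties containing the same fixed point with the same tangent cone, they coincide.

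\textbf{Generic torus orbits.} For generic $M$, the class $[\overline{T\cdot M}]$ is known to be independent of $M$ in equivariant homology. The plan is to degenerate $\overline{T\cdot M}$ to a union of torus-invariant toric varieties $\sigma\cdot Y(F)$, using the subdivision of the permutohedron by the cluster fan. This gives $[\overline{T\cdot M}]=\sum_{\sigma,F}[\sigma\cdot Y(F)]$, with $F$ ranging over maximal NAFs and $\sigma$ over appropriate cosets. The model is the quasisymmetric flag variety picture of \cite{BGNST2}, where the cluster charts tile $U^-$. Concretely, the permutohedron is the moment polytope of the generic orbit, and it is subdivided into pieces matching the moment polytopes of the $\sigma\cdot Y(F)$. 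A flat toric degeneration then realizes this subdivision with all multiplicities equal to one.

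\textbf{Expected main obstacle.} This last step is where I expect the real difficulty. The polyhedral subdivision must be regular, so that a one-parameter degeneration realizes it. If constructing the degeneration proves hard, I would instead verify the identity by localization: compare the restrictions to each fixed point $v$, using equivariant multiplicities (the reciprocal products of weights summed over cones containing $v$) on both sides. By a Brion-type argument, these agree because the cluster fan cones at each vertex refine the tangent cone of the permutohedron.
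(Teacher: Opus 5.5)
Part (1) is exactly the paper's argument. Parts (2) and (3), however, have genuine gaps. For (2), the paper does not rederive anything: it imports from \cite[Theorem 6.1]{nst_c} that each length-additive $X^{wc'}_w$ is a left translate of some $Y(F)$. Your self-contained sketch fails at several points.

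(a) $w^{-1}X^{wc'}_w\neq X^{c'}_e$ in general. For $n=3$, $w=s_1$, $c'=s_2s_1$, the fixed points of $s_1X^{w_\circ}_{s_1}$ are $s_1[s_1,w_\circ]=\{e,s_2,s_2s_1,w_\circ\}$, whereas $[e,s_2s_1]=\{e,s_1,s_2,s_2s_1\}$.

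(b) Your guessed $F$ has one arc per maximal run, hence fewer than $k=\dim X^{wc'}_w$ arcs. For $c'=s_2s_1$ it gives a single arc, but the variety is a surface.

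(c) ``Same fixed point and same tangent cone'' does not force two toric subvarieties of $\fl{n}$ to coincide. The Schubert surfaces $X^{s_1s_2}$ and $X^{s_2s_1}$ in $\fl{3}$ are both smooth at $e$ with tangent weights $-\alpha_1,-\alpha_2$. Yet in the chart $U^-B/B$ one is $\{x_{31}=0\}$ and the other is $\{x_{31}=x_{21}x_{32}\}$.

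The principle does hold when the tangent weights at a fixed point $x$ are $-x\cdot F$ for a NAF $F$. The reason is that no positive root outside $F$ is an $\mathbb{N}$-combination of roots of $F$, since each vertex of a NAF is only a left or only a right endpoint. The orbit closure is then forced to be the coordinate subspace $xU^-_FB/B$ of the chart. But you must still locate such an $x$, and the point $w$ does not always work: $X^{s_2s_1}_e$ has tangent weights $-\alpha_1,-\alpha_2$ at $e$, and these do not form a cluster.

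For (3), you are missing the idea the paper uses. By \cite{HHMP,Li24}, the generic orbit closure degenerates $T$-equivariantly, with multiplicity one, to $\bigcup_{w(n)=n}X^{wc}_w$ with $c=s_{n-1}\cdots s_1$. Hence $[\overline{T\cdot M}]=\sum_{w(n)=n}[X^{wc}_w]$ is a sum of instances of (2), and no new subdivision is needed.

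Your substitute, a tiling of the permutohedron ``by the cluster fan'' into unspecified pieces $\sigma\cdot Y(F)$, is not an argument. The positive cluster fan only subdivides the tangent cone at a single vertex. A localization proof needs a global family of pieces such that at \emph{every} vertex the cones of all pieces through it subdivide the tangent cone. Each piece contributes at all of its fixed points, not only at $\sigma$.

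Using the translated cluster fan at every vertex is impossible. For $n=3$ it would require both the hexagon diagonal from $e$ to $e(1,3)=w_\circ$ and the diagonal from $s_1$ to $s_1(1,3)=s_2s_1$, and these two long diagonals cross at the center. So you would have to exhibit an actual subdivision into moment polytopes of translates of various $Y(F)$ and check the local cones at every vertex. That is precisely the content supplied by the cited degeneration.
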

We note that applying the reversal involution of the type $A$ root system in (2) also resolves the case when $i_1<i_2<\cdots<i_k$. It turns out that cases~\ref{it1}--\ref{it3} are all directly examples of $\sigma\cdot Y(F)$. By \cite[Theorem 6.4]{HHMP}, the class in case~\ref{it4} is the sum of the classes in case~\ref{it3} taken over all $w\in S_n$ with $\ell(wc)=\ell(w)+\ell(c)$ (equivalently $w(n)=n$), where $c=s_{n-1}s_{n-2}\cdots s_1$.
\begin{rem}
    Even for a single special case---a $T$-invariant curve, say, or a generic $T$-orbit closure---the algorithm passes through the Graham-positivity of \textit{all} translated cluster charts.
\end{rem}
The study of Schubert cycle expansion coefficients is more well-studied in the Grassmannian $\operatorname{Gr}(r;n)=\GL_n/(\GL_r\times \GL_{n-r})$. There are analogously defined Grassmannian Schubert and opposite Schubert cycles $X^\lambda$ and $X_\lambda$ obtained as the closures of $B$ and $B^-$-orbits of $r$-Grassmannian permutations $w_\lambda$ associated to partitions $\lambda\subset r\times (n-r)$. The $T$-equivariant homology class $[\overline{T\cdot x}]\in H_\bullet^T(\operatorname{Gr}(r;n))$ depends only on the discrete combinatorial data of the \emph{matroid} of $x$, i.e. the non-vanishing Pl\"ucker coordinates, and expands into Grassmannian Schubert cycles $X^\lambda$, with one such description given by Berget and Fink \cite{BF22}. The following theorem will follow almost immediately from our results in $\fl{n}$.

\begin{maintheorem}
\label{thm:main2}
The following have combinatorially Graham-positive expansions in $\operatorname{Gr(r;n)}$.
\begin{enumerate}
    \item All $T$-invariant curves.
    \item The $S_n$-translates of the Richardson variety $uX_\mu^\lambda$ for $\lambda/\mu$ a ribbon shape.
    \item Generic $T$-orbit closures in translated Richardson varieties $uX^\lambda_\mu$.
\end{enumerate}
\end{maintheorem}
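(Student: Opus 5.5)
The plan is to deduce each item from the flag variety results, \Cref{thm:main1} and \Cref{thm:mainB}, by pushing forward along the projection $\pi:\fl{n}\to\operatorname{Gr}(r;n)$. The map $\pi$ is $T$-equivariant and proper, and it sends $X^w$ onto $X^{\lambda(w)}$, where $\lambda(w)$ is determined by the $r$-Grassmannian representative of $wS_r\times S_{n-r}$. Moreover $\pi_*[X^w]=[X^{\lambda}]$ exactly when $w=w_\lambda$ is $r$-Grassmannian; otherwise the fibre is positive dimensional and $\pi_*[X^w]=0$. Consequently, if $Z\subset\fl{n}$ is $T$-invariant and $\pi|_Z:Z\to\pi(Z)$ is birational, then $[\pi(Z)]=\pi_*[Z]$. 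So any combinatorially Graham-positive expansion $[Z]=\sum f_w[X^w]$ projects to the expansion $[\pi(Z)]=\sum_\lambda f_{w_\lambda}[X^\lambda]$, and positivity is preserved. The whole proof therefore reduces to finding, for each item, a $T$-invariant $Z$ of the type covered by \Cref{thm:main1} or by some $\sigma\cdot Y(F)$ that maps birationally onto the target. By \Cref{thm:mainB} the coefficients $f_w$ are then $(\partial_F\schub{w})(\tl_\sigma;\tl_n)$.

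For (1), every $T$-invariant curve in $\operatorname{Gr}(r;n)$ joins two coordinate subspaces $e_I$ and $e_{I'}$ with $I'=I\setminus\{i\}\cup\{j\}$. I would choose $u\in S_n$ with $u[r]=I$ and take the curve $u\cdot\mathcal{C}_{ab}$ with $u(a)=i$, $u(b)=j$, $a\le r<b$. Its image under $\pi$ is the curve joining $e_I$ to $e_{I'}$, and the map is an isomorphism because both curves are $\mathbb{P}^1$'s on which $T$ acts through the same character. \Cref{thm:main1}\ref{it1} then gives the result.

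For (2), a ribbon $\lambda/\mu$ corresponds to a Bruhat interval $[w_\mu,w_\lambda]$ of Grassmannian permutations with $w_\lambda=w_\mu c'$, where $c'$ is a product of distinct simple reflections, one for each box of the ribbon. These are ordered along the ribbon so that the indices decrease (or increase, after the reversal involution). Since the interval is Grassmannian, $\pi$ restricts to a birational map $X^{w_\lambda}_{w_\mu}\to X^\lambda_\mu$. This is the standard fact that Richardson varieties for $r$-Grassmannian endpoints project birationally, which I would cite rather than reprove. Translating by $u$ and applying \Cref{thm:main1}\ref{it3} gives (2). The main obstacle here is checking that the reduced word read along a ribbon really has the monotone form $s_{i_1}\cdots s_{i_k}$ with $i_1>\cdots>i_k$. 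A ribbon read from southwest to northeast has content decreasing by one at each step, and the box-to-reflection correspondence turns content into the index. I would verify this first, and I expect the direction convention to be the only delicate point.

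For (3), the natural route mirrors \cite[Theorem 6.4]{HHMP}. The generic torus orbit closure in $uX^\lambda_\mu$ is the image under $\pi$ of a generic torus orbit closure in a suitable Richardson or translated cluster chart in $\fl{n}$. Its class should be the sum of the classes of the $uX^{w c'}_w$ over those $w$ whose projections give Coxeter-type intervals inside $[\mu,\lambda]$. A cleaner alternative is to observe directly that a generic point of $uX^\lambda_\mu$ has a lattice path matroid (permuted by $u$), and that this matroid's orbit closure is $\pi(\sigma\cdot Y(F))$ for an explicit NAF $F$ whose arcs encode the two boundary paths. Here I would check birationality by a dimension count: $\dim Y(F)=|F|$ equals the rank of the matroid polytope. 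Then \Cref{thm:mainB} gives positivity. Of the two routes I would attempt this second one first. Identifying the correct $F$ and proving that $\pi$ is generically injective on $Y(F)$ is the step I expect to be hardest.
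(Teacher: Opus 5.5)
Your framework (push forward along $\pi$, using $\pi_*[X^w]=[X^{\lambda_w}]$ for $r$-Grassmannian $w$ and $0$ otherwise) is the paper's, and item (1) is fine. The inputs you feed into it for (2) and (3), however, are wrong.

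\textbf{Item (2).} The problem is not a direction convention. With your conventions ($\pi(wB)=e_{w[r]}$, Grassmannian meaning minimal in $w(S_r\times S_{n-r})$), adding a box replaces some $i\in I$ by $i+1\notin I$. So $w_\lambda=s_iw_\mu$ is a \textit{left} multiplication, and $w_\mu^{-1}w_\lambda$ is generally not a product of distinct simple reflections. Already in $\operatorname{Gr}(1;3)$ with $\mu=(1)$, $\lambda=(2)$ one has $w_\mu=s_1$, $w_\lambda=s_2s_1$, and $w_\mu^{-1}w_\lambda=s_1s_2s_1$, of length $3$. Even when $\mu=\emptyset$ the word need not be monotone. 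For the hook $\lambda=(2,1)$ in $\operatorname{Gr}(2;4)$, $w_\lambda=2413$, whose only reduced words are $s_1s_3s_2$ and $s_3s_1s_2$. Neither is decreasing or increasing, because the boxes of a ribbon must be added in the order imposed by the ribbon's shape, which reverses at each turn. So your argument does not place $X^{w_\lambda}_{w_\mu}$ in the family of \Cref{thm:main1}\ref{it3}.

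The missing idea is to use non-Grassmannian lifts. The paper takes $c'$ to be the product of backward cycles on the disjoint intervals where $b_\mu$ and $b_\lambda$ differ, so that $c'b_\lambda=b_\mu$, $c'\le s_{n-1}\cdots s_1$, and $\ell(c')=|\lambda/\mu|$. It takes $w$ to be the labelling of $b_\mu$ in which the $1$'s get $1,\ldots,r$, the $0$'s at the $k$ right endpoints of the cycles get $n-k+1,\ldots,n$, and the other $0$'s get $r+1,\ldots,n-k$. Then $\pi(X^{wc'}_w)\subseteq X^\lambda_\mu$, the dimensions agree, and $\pi$ is birational, so $[uX^\lambda_\mu]=\pi_*[uX^{wc'}_w]$.

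\textbf{Item (3).} Your preferred route fails for non-ribbon shapes. Take $X^{(2,2)}_\emptyset=\operatorname{Gr}(2;4)$, whose generic matroid is $U_{2,4}$. For $g\in U^-_F$, the first two columns have top rows $(1,0)$ and $(g_{21},1)$.
\begin{itemize}
\item $p_{13}=g_{32}$ and $p_{14}=g_{42}$ force $(2,3),(2,4)\in F$.
\item Then $p_{34}=g_{31}g_{42}-g_{32}g_{41}\neq 0$ forces $(1,4)\in F$, since $(1,3)$ crosses $(2,4)$.
\item Finally $p_{23}=g_{21}g_{32}-g_{31}$ would need $(1,2)\in F$, which bumps $(2,3)$, or $(1,3)\in F$, which crosses $(2,4)$.
\end{itemize}
Left translation by $\sigma$ only permutes Pl\"ucker coordinates, so no $\pi(\sigma\cdot Y(F))$ is this generic orbit closure. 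Your other route asserts an HHMP-type degeneration for Richardson varieties without any argument.

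What is missing is the matroidal reduction of (3) to (2). The class $[\overline{T\cdot\Lambda}]$ depends only on the matroid and is additive under subdivisions of the matroid polytope (Berget--Fink). Bidkhori subdivides every lattice path matroid polytope into snake matroid polytopes, compatibly with $S_n$-translation. Translated snake matroids are exactly the generic matroids of the toric varieties $uX^{\lambda'}_{\mu'}$ with $\lambda'/\mu'$ a ribbon, and these are covered by (2).
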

    The last case subsumes the second case and corresponds to $x$ having its matroid being an $S_n$-translate of a \emph{lattice path matroid}. We in fact prove the last case as a direct corollary of the second case in a way which does not generalize to the complete flag variety beyond the completely generic $T$-orbit closure (where the translation is irrelevant).

\section{Preliminaries}
\label{sec:reminders}
We work with two alphabet sets $\xl=(x_1,x_2,\ldots)$ and $\tl=(t_1,t_2,\ldots)$, the \emph{non-equivariant} and \emph{equivariant} variables respectively, and write $\xl_n,\tl_n$ for their finite truncations $(x_1,\ldots,x_n)$ and $(t_1,\ldots,t_n)$.
\subsection{Torus and characters}
 We identify the characters $\operatorname{Char}(T)\cong \mathbb{Z}^n$ where $\operatorname{Diag}(z_1,\ldots,z_n)\mapsto z_1^{a_1}\cdots z_n^{a_n}$ corresponds to $(a_1,\ldots,a_n)=\sum a_ie_i$.

 We have $H^\bullet_T(pt)\cong \operatorname{Sym}^\bullet \operatorname{Char}(T)\cong \mathbb{Z}[\tl_n]$ where, following the standard convention in algebraic combinatorics, $t_i$ corresponds to inverse character $-e_i=\chi_i^{-1}$. 
 Write $\Phi^+\coloneqq \{e_i-e_j\suchthat i<j\}\subset \mathbb{Z}^n$ for the set of positive roots, $\alpha_i=e_i-e_{i+1}\in \Phi^+$ for $1\le i \le n-1$ the simple roots, and $\Phi=\Phi^+\sqcup (-\Phi^+)$ for all of the roots. For any root $\alpha=e_i-e_j\in \Phi$ and $w\in S_n$, $w$ acts on $\alpha$ by $$\alpha^w\coloneqq w\cdot \alpha=w\cdot (e_i-e_j)=e_{w(i)}-e_{w(j)}.$$
 Under our sign conventions, the simple root $\alpha_i$ corresponds to $t_{i+1} - t_i$,
 and we denote  the set of \emph{Graham-positive polynomials} by
 \[
    \mathbb{N}[\Phi^+]=\mathbb{N}[\alpha_1,\alpha_2,\ldots,\alpha_{n-1}]=\mathbb{N}[t_2-t_1,\ldots,t_n-t_{n-1}]\subset H^\bullet_T(pt).
 \]
\subsection{Double Schubert polynomials}
 The divided difference operators $\partial_i=\frac{\idem-s_i}{x_i-x_{i+1}}\in \operatorname{End}(\mathbb{Z}[\xl_n])$ satisfy the nil-Hecke relations
$$\partial_i^2=0\qquad \partial_i\partial_{i+1}\partial_i=\partial_{i+1}\partial_i\partial_{i+1}\qquad \partial_i\partial_j=\partial_j\partial_i\text{ for }|i-j|\ge 2.$$
Consequently, choosing any reduced word $w=s_{i_1}\cdots s_{i_\ell}$ there is a well-defined composite $\partial_w\coloneqq \partial_{i_1}\cdots \partial_{i_\ell}$, and if $s_{j_1}\cdots s_{j_k}$ is not reduced then $\partial_{j_1}\cdots \partial_{j_k}=0$.

Let $\des{w}=\{i\suchthat w(i)>w(i+1)\}$. The double Schubert polynomials $\{\schub{w}(\xl_n; \tl_n) \suchthat w \in S_n\}$ are characterized by $\schub{w_{\circ}}(\xl_n; \tl_n) = \prod_{i+j \le n}(x_i - t_j)$, where $w_{\circ} \in S_n$ is the longest element, together with the recursion
$$\partial_i \schub{w} = \begin{cases} \schub{ws_i} & i\in \des{w} \\ 0 & \text{otherwise.} \end{cases}$$
This definition is stable: setting $S_\infty = \langle s_1, s_2, \ldots \rangle = \bigcup_{n \ge 0} S_n$, the polynomial $\schub{w}(\xl; \tl)$ is independent of the ambient $n$, so we may index $\schub{w}(\xl; \tl)$ by $w \in S_\infty$.

For $\sigma\in S_n$ we write $$\ev_{\sigma}f(\xl_n;\tl_n)\coloneqq f(\tl_{\sigma};\tl_n)\text{ where }\tl_{\sigma}\coloneqq (t_{\sigma(1)},\ldots,t_{\sigma(n)})$$ The double Schubert polynomials form a basis for the ring of polynomials $\mathbb{Z}[\tl][\xl]$, characterized by the duality 
\begin{equation}
\label{eqn:KroneckerDual}\ev_{\idem}\partial_w\schub{w'}=\delta_{w,w'}.
\end{equation}

\subsection{Equivariant (co)homology and Graham-positivity}
By work of Borel \cite{Bor53}, the $T$-equivariant cohomology ring $H^\bullet_T(\fl{n})$ has a presentation
$$H^\bullet_T(\fl{n})\cong \mathbb{Z}[\tl_n][\xl_n]/\langle f(\xl_n)-f(\tl_n)\suchthat f\in \sym{n}\rangle.$$
with free $\mathbb{Z}[\tl_n]$-basis of double Schubert polynomials $\{\schub{w}(\xl_n;\tl_n)\suchthat w\in S_n\}$, where $x_1,\ldots,x_n$ are the dual Chern roots of the tautological flag of subspaces $\emptyset\subsetneq\mathcal{F}_1\subsetneq \cdots \subsetneq \mathcal{F}_{n-1}\subsetneq \underline{\mathbb{C}}^n$, i.e. $x_i=c_1^T((\mathcal{F}_i/\mathcal{F}_{i-1})^{\vee})$.

To work with Graham-positive expansions combinatorially, we take the Kostant--Kumar approach \cite{KK86}, identifying $T$-equivariant homology with the dual $\mathbb{Z}[\tl_n]$-module
$$
    H_\bullet^T(\fl{n})=\operatorname{Hom}_{\mathbb{Z}[\tl_n]}(H^\bullet_T(\fl{n}),\mathbb{Z}[\tl_n]).
$$

$H_\bullet^T(\fl{n})$ has a $\mathbb{Z}[\tl_n]$-basis of $\{\ev_{\idem}\partial_w\suchthat w\in S_n\}$, which \eqref{eqn:KroneckerDual} implies is Kronecker dual to the double Schubert polynomials.
In this model the fundamental class $[X]$ of a $T$-invariant variety $X$ is represented by the operation
$$\int_X^T:H^\bullet_T(\fl{n})\to \mathbb{Z}[\tl_n]$$
obtained via the $\mathbb{Z}[\tl_n]$-perfect pairing of $H^\bullet_T(\fl{n})$ and $H_\bullet^T(\fl{n})$. Then $\ev_{\idem}\partial_w$ corresponds to $[X^w]$, and a Graham-positive class corresponds to an element
$$\sum f_w[X^w]=\sum f_w\ev_{\idem}\partial_w\in H_\bullet^T(\fl{n})$$
with each $f_w\in \mathbb{N}[t_2-t_1,\ldots,t_n-t_{n-1}]$. To explain how this relates to double Schubert polynomials, note that applying this operation to $\schub{w}(\xl;\tl)$ recovers $f_w(t_2-t_1,\ldots,t_n-t_{n-1})$, so the Graham-positivity of any operation in $H_\bullet^T(\fl{n})$ can be detected by applying it to the double Schubert polynomial basis.

Finally, the left action of $S_n$ preserves the class of $T$-invariant subvarieties of $\fl{n}$, and 
\begin{equation}
\label{eqn:Snhomology}\int_{\sigma\cdot X}^Tf=(\int_X^T f(\xl_n;\tl_{\sigma^{-1}}))|_{\tl_n\mapsto \tl_{\sigma}}\end{equation}

\subsection{The algebra $\DD_n$}
The \emph{divided difference algebra} is the algebra $\DD_n\subset \operatorname{End}(\mathbb{Z}[\xl_n])$ generated by (multiplication by) $\mathbb{Z}[\xl_n]$ and the divided difference operators $\partial_1,\ldots,\partial_{n-1}$. The identity $s_i=\idem+(x_{i+1}-x_i)\partial_i$ gives a canonical embedding $\mathbb{Z}[S_n]\hookrightarrow \DD_n$. In $\DD_n$ we identify $\alpha_i=x_{i+1}-x_i$, so we can write $$\partial_i=\frac{\idem-s_i}{-\alpha_i}\text{ and }s_i=\idem+\alpha_i\partial_i.$$
By the Leibniz identities $\partial_ix_i=x_{i+1}\partial_i+\idem\text{ and }\partial_ix_{i+1}=x_i\partial_i-\idem$
we can straighten any element $\psi\in \DD_n$ as $\psi=\sum f^\psi_w(\xl_n)\partial_w$. Because we can recover $f^\psi_w(\tl_n)=\ev_{\idem} \psi \schub{w}(\xl_n;\tl_n)$, this expansion is unique and we have
$$\DD_n=\bigoplus_{w\in S_n}\mathbb{Z}[\xl_n]\partial_w.$$

The endomorphisms in $\DD_n$ all descend to $\operatorname{End}_{\mathbb{Z}[\tl_n]}(H^\bullet_T(\fl{n}))$, and there is a surjection $\DD_n \twoheadrightarrow H_\bullet^T(\fl{n})$ given by composing
with the evaluation homomorphism $\ev_{\idem}:f(\xl_n;\tl_n)\mapsto f(\tl_n;\tl_n)$, making $H_\bullet^T(\fl{n})$ into a right module for $\DD_n$. Under this map we have
$$\sum_{w\in S_n} f_w(\xl_n)\partial_w\mapsto \sum_{w\in S_n} f_w(\tl_n)\ev_{\idem}\partial_w=\sum_{w\in S_n}f_w(\tl_n)[X^w].$$

For a general positive root $\alpha=e_i-e_j\in \Phi^+$ with reflection $\tau_\alpha=(i,j)$, write $\alpha=\alpha_p^{\,u}$ for some simple $\alpha_p$ and $u\in S_n$, and set
$$\partial_\alpha\coloneqq u\partial_p u^{-1}=\frac{\idem-\tau_\alpha}{-\alpha},$$
which is independent of the chosen representation.
\begin{rem}For us the notation $\partial_\alpha$ when used tacitly assumes that $\alpha$ is a positive root.
\end{rem}

We will often work in the subalgebra $\DD_n^{\Phi}\subset \DD_n$ generated by the $\alpha_i$ and the $\partial_i$; it contains every $\tau_\alpha$ and hence every $u\in S_n$ and long-range divided difference $\partial_\alpha$.
The key relations are the nil-Hecke relations, the relation $\alpha^u=u\alpha u^{-1}$,
\begin{equation}\label{eqn:conjrelabel}
\partial_\beta\,\tau_\gamma=\tau_\gamma\,\partial_{\tau_\gamma\cdot\beta}\qquad(\beta,\ \tau_\gamma\!\cdot\!\beta\in\Phi^+),
\end{equation}
the relation $\tau_\alpha\partial_\alpha=\partial_\alpha$, $\partial_\alpha\tau_\alpha=-\partial_\alpha$,
and the Leibniz rule
$\partial_\alpha \beta=\beta^{\tau_\alpha}\partial_\alpha-\langle \alpha,\beta\rangle$
where $\langle\alpha,\beta\rangle$ is the usual dot product of the vectors. Using the Leibniz rule we see that every operator $\psi\in \DD_n^{\Phi}$ can be expanded as $\psi=\sum f_w^{\psi}(\alpha_1,\ldots,\alpha_{n-1})\partial_w$ with $f_w^{\psi}\in \mathbb{Z}[\Phi]$, and as above this expansion is unique so
$$\DD_n^\Phi=\bigoplus_{w\in S_n}\mathbb{Z}[\Phi]\partial_w.$$
We identify the Graham-positive elements
$$\bigoplus_{w\in S_n}\mathbb{N}[\Phi^+]\partial_w\subset \DD_n^\Phi,$$
which under the map to $H_\bullet^T(\fl{n})$ descend to the Graham-positive linear combinations of Schubert cycles $[X^w]$.

We conclude with the AJS--Billey formula \cite{AJS,Bil99} (see also~\cite{GK21}), which expands a permutation $u$ viewed in $\DD_n$ Graham-positively. 
Via the surjection $\DD_n\twoheadrightarrow H_\bullet^T(\fl{n})$ it computes the class of the $T$-fixed point $\{u\}$.


\begin{thm}[{AJS--Billey \cite{AJS,Bil99}}]
\label{th:ajs-billey}
    Each $u\in S_n\subset \DD_n^\Phi$ has a combinatorially Graham-positive expansion
    $$u=\sum_{w\in S_n}f^{\{u\}}_w(\alpha_1,\ldots,\alpha_{n-1})\,\partial_w\in \DD_n^\Phi,\qquad f^{\{u\}}_w\in\mathbb{N}[\Phi^+].$$
\end{thm}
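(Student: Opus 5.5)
We prove the AJS--Billey formula by induction on $\ell(u)$, working entirely in $\DD_n^\Phi$ and using only the relation $s_i=\idem+\alpha_i\partial_i$ together with the Leibniz rule. Fix a reduced word $u=s_{i_1}s_{i_2}\cdots s_{i_\ell}$. Expanding each factor gives
$$u=\prod_{k=1}^{\ell}(\idem+\alpha_{i_k}\partial_{i_k}).$$
The first step is to move every root to the left of all the divided differences. For this we use the conjugation relation $\alpha^v=v\alpha v^{-1}$ rather than the Leibniz rule term by term. Write the product as a sum over subsets $J=\{j_1<\cdots<j_m\}\subseteq[\ell]$, and insert $s_{i_k}s_{i_k}=\idem$ to rewrite
$$s_{i_1}\cdots s_{i_{j-1}}\,\alpha_{i_j}\partial_{i_j}\cdots=\beta_j\, s_{i_1}\cdots s_{i_{j-1}}\partial_{i_j}\cdots,\qquad \beta_j\coloneqq s_{i_1}\cdots s_{i_{j-1}}\cdot\alpha_{i_j}.$$

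This is cleanest done as an induction on $\ell$. Set $u=u's_{i_\ell}$ with $u'=s_{i_1}\cdots s_{i_{\ell-1}}$. Then
$$u=u'+u'\alpha_{i_\ell}\partial_{i_\ell}=u'+\beta_\ell\,u'\partial_{i_\ell},$$
where $\beta_\ell=u'\cdot\alpha_{i_\ell}$ is a \emph{positive} root, because the word is reduced. Iterating this identity yields the Billey form
$$u=\sum_{J\subseteq[\ell]}\Bigl(\prod_{j\in J}\beta_j\Bigr)\,\partial_{i_{j_1}}\cdots\partial_{i_{j_m}}.$$
In this sum, every term whose subword $s_{i_{j_1}}\cdots s_{i_{j_m}}$ is nonreduced vanishes by the nil-Hecke relations. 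Every remaining term equals $\partial_w$ with $w=s_{i_{j_1}}\cdots s_{i_{j_m}}$. Hence
$$f_w^{\{u\}}=\sum_{J}\prod_{j\in J}\beta_j,$$
where the sum runs over those $J$ whose subword is a reduced word for $w$.

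The delicate point is the induction bookkeeping: the step $u'\partial_{i_\ell}$ must be expanded by applying the inductive hypothesis to $u'$ first and only then right-multiplying by $\partial_{i_\ell}$. With that order, a coefficient standing to the left of $\partial_{v}$ remains to the left after right multiplication, and $\partial_v\partial_{i_\ell}$ is either $\partial_{vs_{i_\ell}}$ or $0$. No Leibniz corrections ever arise, because all roots are already on the left. The main obstacle is therefore only to verify that each $\beta_j$ is a positive root. This holds because the word $s_{i_1}\cdots s_{i_j}$ is reduced, which gives $s_{i_1}\cdots s_{i_{j-1}}(\alpha_{i_j})\in\Phi^+$.

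Finally, each positive root $e_a-e_b$ with $a<b$ equals $\alpha_a+\cdots+\alpha_{b-1}$, which lies in $\mathbb{N}[\Phi^+]$. Products and sums of such roots stay in $\mathbb{N}[\Phi^+]$. The formula is manifestly combinatorial, indexed by reduced subwords, so it gives the claimed Graham-positive expansion.
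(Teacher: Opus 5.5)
Your proof is correct and is essentially the paper's. The identity $u=u'+\beta_\ell\,u'\partial_{i_\ell}$ with $u'=us_{i_\ell}$ is exactly the paper's step $u=us_i+\alpha_i^{\,us_i}(us_i)\partial_i$ for $i=i_\ell\in\des{u}$, iterated down in length; the only difference is that you fix the reduced word in advance and unroll the recursion into Billey's closed formula over reduced subwords, which the paper leaves implicit.
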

\begin{proof}
    Pick $i\in\des{u}$. 
    Using $s_i=\idem+\alpha_i\partial_i$ we have
$$u=us_is_i=us_i(\idem+\alpha_i\partial_i)=us_i+\alpha_i^{\,us_i}\,(us_i)\partial_i,$$
where $\alpha_i^{\,us_i}=us_i(\alpha_i)\in\Phi^+$ since $i\in\des u$. As $\ell(us_i)<\ell(u)$, iterating with $us_i$ terminates and yields coefficients in $\mathbb{N}[\Phi^+]$.
\end{proof}

\begin{eg}
    The inclusion $\iota_{\sigma}:\{pt\}\hookrightarrow \fl{n}$ to the $T$-fixed point $\{\sigma\}\subset \fl{n}$ induces the map $\iota^*_\sigma:H^\bullet_T(\fl{n})\to H^\bullet_T(pt)$ given by $f\mapsto \ev_{\sigma}f$. Indeed, $\iota^*_{\sigma} x_i=\iota^*_{\sigma}c_1^T((\mathcal{F}_i/\mathcal{F}_{i-1})^{\vee})=c_1^T((\mathcal{F}_i/\mathcal{F}_{i-1})^{\vee}|_{\sigma})=t_{\sigma(i)}$ under our conventions that $t_j$ corresponds to the inverse of the $j$'th standard character. By the push-pull formula we have $\int_{\{\sigma\}}^Tf=\int_{pt}^T\ev_{\sigma}f=\ev_{\sigma}f$. Therefore $[\{\sigma\}]=\ev_{\sigma}\in H_\bullet^T(\fl{n})$, and
    $$[\{\sigma\}]=\ev_{\sigma}=\sum \schub{w}(\tl_{\sigma};\tl_n)[X^w]=\sum \schub{w}(\tl_{\sigma};\tl_n) \ev_{\idem}\partial_w,$$
    so the Graham-positivity of $[\{\sigma\}]$ records all AJS--Billey expansions $\schub{w}(\tl_{\sigma};\tl)$.
\end{eg}

\section{Noncrossing forests and cluster charts}
\subsection{Noncrossing forests}
Identify positive roots $\alpha=e_{i}-e_{j}\in \Phi^+$ for $1\leq i<j\leq n$ with arcs $(i,j)$ on vertex set $V=\{1,\ldots,n\}$. The \emph{endpoints} of $\alpha$ are $\alpha_L=i$ and $\alpha_R=j$. For a collection of arcs $\mathcal{C}\subset \Phi^+$ we denote the \emph{support} $$\supp \mathcal{C}=\bigcup_{\alpha\in \mathcal{C}} \{\alpha_R,\alpha_L\}.$$
We shall henceforth interchangeably use terminology of positive roots and arcs.

\begin{defn}
    For \emph{distinct} positive roots $\alpha\neq\beta\in \Phi^+$, they either
    \begin{itemize}
        \item \emph{nest} if $\alpha_L\le \beta_L<\beta_R\le \alpha_R$ (written $\beta\prec \alpha$) or $\beta_L\le \alpha_L < \alpha_R \le \beta_R$. (written $\alpha\prec \beta$),
        \item are \emph{separated} if $\alpha_L<\alpha_R<\beta_L<\beta_R$ or $\beta_L<\beta_R<\alpha_L<\alpha_R$,
        \item \emph{bump} if $\alpha_L< \alpha_R=\beta_L<\beta_R$ or $\beta_L< \beta_R=\alpha_L<\alpha_R$, or
        \item \emph{cross} if $\alpha_L<\beta_L<\alpha_R<\beta_R$ or $\beta_L<\alpha_L<\beta_R<\alpha_R$.
    \end{itemize}
    A collection of roots is \emph{noncrossing} if no two arcs in the collection cross. A \emph{noncrossing forest} is a noncrossing subset $F\subseteq \Phi^+$ whose arc diagram is acyclic (or equivalently the roots are linearly independent). In the figure below the first three arc diagrams show nested arcs, the fourth shows two separated arcs, the fifth shows two bumping arcs, and the sixth shows crossing arcs.
\end{defn}
\begin{center}
\begin{tikzpicture}[scale=0.75]
    \draw (-6,0) arc (0:180:1);
    \draw (-7,0) arc (0:180:0.5);
\end{tikzpicture}\,\,
\begin{tikzpicture}[scale=0.75]
    \draw (-3,0) arc (0:180:1);
    \draw (-3,0) arc (0:180:0.5);
\end{tikzpicture}\,\, 
\begin{tikzpicture}[scale=0.75]
    \draw (0,0) arc (0:180:1);
    \draw (-0.5,0) arc (0:180:0.5);
\end{tikzpicture}\qquad
\begin{tikzpicture}[scale=0.75]
    \draw (3,0) arc (0:180:1);
    \draw (5.25,0) arc (0:180:1);
\end{tikzpicture}\qquad
\begin{tikzpicture}[scale=0.75]
    \draw (8,0) arc (0:180:1);
    \draw (10,0) arc (0:180:1);
    \draw (13,0) arc (0:180:1);
    \draw (14,0) arc (0:180:1);
\end{tikzpicture}
\end{center}

Given a noncrossing forest $G$, we say that  $\alpha\in G$ is \emph{minimal} in $G$ if no $\beta\in G\setminus
\{\alpha\}$ satisfies $\beta\prec\alpha$.
We say $\alpha\in G$ is a \emph{leaf edge} if at least one of $\alpha_L,\alpha_R$ is not the endpoint of any other root in $G$, and call such an endpoint of the leaf edge \emph{free}.

\begin{defn}
\label{def:valid}
A total order $(\alpha_1,\ldots,\alpha_m)$ on the roots of a noncrossing forest $F$ is
\emph{valid} if:
\begin{enumerate}[label=\textup{(\Alph*)}]
\item \label{itA} If $\alpha_i\prec\alpha_j$ then $i>j$, and
\item \label{itB} each root  $\alpha_p$ is a leaf edge in $\{\alpha_1,\ldots,\alpha_{p-1},\alpha_p\}$.
\end{enumerate}
For a valid ordering $\omega$ of a noncrossing forest $G$, we write $\partial^{\,\omega}_G$ for the composite operation $\partial_{\alpha_1}\cdots \partial_{\alpha_m}$.
\end{defn}

\begin{figure}[!ht]
    \centering
    \includegraphics[scale=0.9]{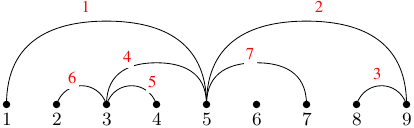}
    \caption{A noncrossing forest $F$ with the arcs endowed with a valid ordering}
    \label{fig:ncaf_valid}
\end{figure}

We now identify a particular class of noncrossing forests that will be especially important.
\begin{defn}A \emph{noncrossing alternating forest} (NAF) is a noncrossing forest $F$ such that no two arcs bump. 
\end{defn}
\begin{rem}It is straightforward to see that any collection $\mathcal{C}\subset \Phi^+$ such that no two arcs bump or cross is automatically a forest, and hence a NAF.
\end{rem}

In a NAF $F$, two arcs sharing an endpoint nest, so arcs that are incomparable under nesting have disjoint supports and their long-range divided differences commute. Thus, a valid ordering on $F$ is simply a linear extension of the nesting order on $F$.
Additionally, all valid orderings of $F$ yield the same operator $\partial_F$, recovering the definition in the introduction. For noncrossing forests $G$ that are not alternating, $\partial_G^{\,\omega}$ may depend on $\omega$.

\begin{thm}
\label{thm:noncrossingequivtoNAF}
    Let $G$ be a noncrossing forest with a valid ordering $\omega=(\gamma_1,\dots,\gamma_m)$.
    Then there exist a permutation $u\in S_n$ and a NAF $F$ with $\supp(F)= \supp(G)$  such that
$\partial_G^{\omega}=u\,\partial_F.$
\end{thm}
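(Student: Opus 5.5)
The plan is to induct on the number $m$ of arcs, peeling off the last root $\gamma_m$ of the valid ordering. Write $G'=\{\gamma_1,\dots,\gamma_{m-1}\}$. Condition~\ref{itB} for $p\le m-1$ is inherited, and \ref{itA} restricts, so $\omega'=(\gamma_1,\dots,\gamma_{m-1})$ is a valid ordering of the noncrossing forest $G'$. Since $\partial_G^{\omega}=\partial_{G'}^{\omega'}\partial_{\gamma_m}$, induction gives $\partial_G^{\omega}=u'\,\partial_{F'}\,\partial_{\gamma_m}$ for some NAF $F'$ with $\supp F'=\supp G'$. It then suffices to show that $\partial_{F'}\partial_{\gamma_m}=v\,\partial_F$ for a permutation $v$ and a NAF $F$ with $\supp F=\supp F'\cup\{(\gamma_m)_L,(\gamma_m)_R\}$. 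We would then take $u=u'v$.

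First I would record the local geometry of $\gamma_m=(a,b)$. By \ref{itA}, no arc of $G'$ is nested inside $\gamma_m$. Because $G$ is noncrossing, any arc with an endpoint strictly between $a$ and $b$ would have to be nested inside $\gamma_m$. Hence no vertex of $(a,b)$ lies in $\supp G'=\supp F'$. By \ref{itB}, one endpoint, say $a$ (the case of $b$ is symmetric), is free, so $\supp F'\cap[a,b]\subseteq\{b\}$. It follows that $\gamma_m$ crosses no arc of $F'$, and the arcs of $F'$ meeting $b$ are those ending at $b$ (which nest with $\gamma_m$) and those starting at $b$ (which bump). If no arc $(b,d)\in F'$ exists, then $F=F'\cup\{\gamma_m\}$ is a NAF. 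Since $\gamma_m$ is minimal in it, placing $\partial_{\gamma_m}$ last is a linear extension of nesting, so $\partial_{F'}\partial_{\gamma_m}=\partial_F$ and $v=\idem$.

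The essential case is when $\gamma_m$ bumps arcs $(b,d)\in F'$. The model computation uses \eqref{eqn:conjrelabel} together with $\tau_\alpha\partial_\alpha=\partial_\alpha$, $\partial_\alpha\tau_\alpha=-\partial_\alpha$. For two bumping arcs it gives
\[
\partial_{ij}\partial_{jk}=\tau_{jk}\partial_{ik}\tau_{jk}\partial_{jk}=\tau_{jk}\partial_{ik}\partial_{jk},\qquad
\partial_{jk}\partial_{ij}=\tau_{ij}\partial_{ik}\tau_{ij}\partial_{ij}=\tau_{ij}\partial_{ik}\partial_{ij}.
\]
In each case a permutation moves to the left, and the bump is replaced by a nesting with shared endpoint, without changing the support $\{i,j,k\}$. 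In our situation I would pick the arc $(b,d)\in F'$ that is minimal among the arcs of $F'$ starting at $b$; arcs at $b$ form a nested chain in a NAF, so this makes sense. I would write $\partial_{ab}=\tau_{bd}\partial_{ad}\tau_{bd}$ and absorb the right-hand $\tau_{bd}$ into $\partial_{bd}$. The left-hand $\tau_{bd}$ would then be commuted leftward through the operators of $\partial_{F'}$ preceding $\partial_{bd}$, using \eqref{eqn:conjrelabel}. Arcs disjoint from $\{b,d\}$ are untouched. Arcs containing both $b$ and $d$ are fixed by $\tau_{bd}$ up to relabelling within the chain. The result is $\tau_{bd}\,\partial_{F''}$, where $F''$ is $F'$ with the new arc $(a,d)$ inserted just outside $(b,d)$, or equivalently (after reindexing) with $(a,b)$ replaced.

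I expect the main obstacle to be verifying that this leftward commutation is legal and produces a NAF. Concretely, one must check that each relabelled root $\tau_{bd}\cdot\beta$ stays positive, that the new arc $(a,d)$ neither crosses nor bumps anything, and that the support is exactly $\supp F'\cup\{a\}$. My first attempt would be to use the noncrossing structure: arcs of $F'$ with exactly one endpoint in $\{b,d\}$ and the other strictly between $b$ and $d$ are nested inside $(b,d)$, so they occur after it in the ordering and need not be passed. Arcs outside must contain $[b,d]$ or be disjoint from it. If this bookkeeping fails, the fallback is a secondary induction on the number of bumps, applying the two-arc identities one bump at a time. For this I would use the potential ``number of bumping pairs plus total nesting depth'' to guarantee termination.
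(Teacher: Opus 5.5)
\textbf{There is a genuine gap in the essential (bumping) case: your key manipulation is oriented the wrong way.} Both of your model identities rewrite the \emph{left} factor and absorb the conjugating transposition into the \emph{right} factor. For example, $\partial_{ij}\partial_{jk}=\tau_{jk}\partial_{ik}\tau_{jk}\partial_{jk}$ uses $\tau_{jk}\partial_{jk}=\partial_{jk}$. But in $\partial_{F'}\partial_{ab}$ the factor $\partial_{ab}$ comes last, to the right of $\partial_{bd}$. After substituting $\partial_{ab}=\tau_{bd}\partial_{ad}\tau_{bd}$, the right-hand $\tau_{bd}$ sits at the far right of the word, with no $\partial_{bd}$ after it to absorb it. The left-hand $\tau_{bd}$, moving leftward, first has to pass the arcs after $(b,d)$. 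It then meets $\partial_{bd}\tau_{bd}=-\partial_{bd}$, so it is eaten rather than commuted past.

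The output you predict is in fact false. Take $n=3$ and $G=\{(2,3),(1,2)\}$ with valid ordering $((2,3),(1,2))$, so $\partial_G^{\omega}=\partial_2\partial_1$. Your recipe with $a=1$, $b=2$, $d=3$ yields
$\tau_{23}\partial_{13}\partial_{23}=s_2(s_2\partial_1 s_2)\partial_2=\partial_1\partial_2\neq\partial_2\partial_1$.
What you have actually computed is $\partial_{ab}\partial_{bd}$ (your first model identity), not $\partial_{bd}\partial_{ab}$. The fallback, a secondary induction on bumps with an unspecified potential, is too vague to close this.

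\textbf{The missing idea is simpler than choosing a minimal $(b,d)$.} Write $\partial_{ab}=\tau_{ab}\partial_{ab}$, and push $\tau_{ab}$ leftward through \emph{all} of $\partial_{F'}$ using \eqref{eqn:conjrelabel}.

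Your own observation that $\supp F'\cap[a,b]\subseteq\{b\}$ means exactly that $\tau_{ab}$ acts on $\supp F'$ as the order-preserving relabeling $b\mapsto a$. So every conjugated root stays positive, and $(F')^{\tau_{ab}}$ is again a NAF, with $b$ vacated.

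Because $F'$ is a NAF and $b$ is the left endpoint of some arc $(b,d)$, $b$ is not the right endpoint of any arc. Hence in $(F')^{\tau_{ab}}$ every arc at $a$ starts at $a$ and contains $(a,b)$. Therefore $F=(F')^{\tau_{ab}}\cup\{(a,b)\}$ is a NAF with $(a,b)$ minimal and $\supp F=\supp F'\cup\{a\}$, and
$\partial_{F'}\partial_{ab}=\tau_{ab}\,\partial_F$, i.e. $v=\tau_{ab}$.

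This is the paper's argument, written there for the mirror case where $\gamma_R$ is free. In the $n=3$ example it gives the correct $\partial_2\partial_1=s_1\partial_{13}\partial_1$.
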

\begin{proof}
    We proceed by induction on $m$.
    Let $\gamma\coloneqq \gamma_m$, and write $G'=G\setminus \{\gamma\}$ with valid ordering $\omega'=(\gamma_1,\ldots,\gamma_{m-1})$. Because
 $\omega$ is valid, $\gamma$ is minimal and a leaf edge -- without loss of generality assume that $\gamma_R$ is its free endpoint.  The  inductive hypothesis tells us that $\partial_{G'}^{\omega'}=u'\partial_{F'}$ for some NAF $F'$ with $\supp F' = \supp G'$.
The conditions on $\gamma$ therefore imply that \begin{equation}\label{eqn:G'disjoint}\supp(F')\cap\{\gamma_L+1,\dots,\gamma_R\}=\emptyset.\end{equation}
Now we take two cases.

    Suppose first that $\gamma_L\ne \alpha_R$ for any $\alpha\in F'$.
    Then  $F''=F'\cup \{\gamma\}$ is a NAF with $\gamma$ a minimal arc by \eqref{eqn:G'disjoint}, and $\supp(F'')= \supp(G)$. We conclude $\partial_G^{\omega}=\partial_{G'}^{\omega'}\partial_\gamma=u'\partial_{F'}\partial_{\gamma}=u'\partial_{F''}$ as desired.

    Now suppose that $\gamma_L=\alpha_R$ for some $\alpha\in F'$. By \eqref{eqn:G'disjoint}, the reflection $\tau_\gamma$ acts on the arcs in $F'$ as the order-preserving relabeling taking $\gamma_L\mapsto \gamma_R$ and fixing all other endpoints.
    This carries each root of $F'$ to a positive root and makes $(F')^{\tau_\gamma}$ a NAF, in which $\gamma_L$ is not in the support and $\gamma_R$ is a right endpoint. Therefore $F''\coloneqq (F')^{\tau_\gamma}\cup\{\gamma\}$ is a NAF with $\gamma$ minimal, so by \eqref{eqn:conjrelabel} we have
    \[
        \partial_G^{\omega}=\partial_{G'}^{\omega}\partial_\gamma=u'\partial_{F'}\partial_{\gamma}=u'\partial_{F'}\tau_{\gamma}\partial_{\gamma}=u'\tau_{\gamma}\partial_{(F')^{\tau_{\gamma}}} \partial_{\gamma}=(u'\tau_{\gamma})\,\partial_{F''}
    \]
    as desired.
\end{proof}

\section{A divided difference expansion}


Our goal will be to show through combinatorial means that for any NAF $F$ and $u\in S_n$ we have
\begin{equation}\label{eqn:upartialF}u\partial_F=\sum f^F_w(\alpha_1,\ldots,\alpha_{n-1})\partial_w\in \DD_n^\Phi\end{equation}
with $f^F_w\in \mathbb{N}[\Phi^+]$. This will be the content of \Cref{thm:upartialFpositive}. 
\subsection{A key identity}
The following theorem is our key identity. The perceived asymmetry in the presentation of the two equalities in the theorem, in particular the presence of $+\beta$ in one equation and $-\beta$ in the other equation, is crucial to the proof of \Cref{thm:upartialFpositive}.
\begin{thm}
\label{thm:Magic}
    Suppose $\{\alpha,\beta\}=\{e_i-e_j,e_j-e_k\}$ for some $i<j<k$. Then
    \begin{align*}
\partial_{\alpha+\beta}=&\tau_{\alpha}\partial_{\beta}+\partial_{\alpha}+\beta \partial_{\alpha+\beta}\partial_{\alpha}\\
\partial_{\alpha+\beta}=&\tau_\beta\tau_\alpha\partial_{\beta}+\tau_{\beta}\partial_{\alpha}-\beta \partial_{\alpha+\beta}\partial_{\beta}
\end{align*}
\end{thm}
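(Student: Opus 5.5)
The plan is to verify both identities by a direct computation in the twisted group algebra. All operators in $\DD_n^\Phi$ act faithfully on $\mathbb{Z}[\xl_n]$, hence on its fraction field. So it suffices to check each identity inside $\mathbb{Q}(\xl_n)\rtimes S_n$. There every element has a unique expression $\sum_{w} c_w(\xl)\,w$ with rational-function coefficients $c_w(\xl)$, since distinct field automorphisms are linearly independent by Artin's lemma. All operators involved only involve the reflections $\tau_\alpha,\tau_\beta,\tau_{\alpha+\beta}$. These generate a copy of $S_3$ acting on $\{i,j,k\}$, so at most six group elements appear: $\idem$, the three reflections, and the two $3$-cycles. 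Each side will be rewritten in this form and the six coefficients compared.

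The computation uses only the following facts, which hold in both cases $\alpha=e_i-e_j$ and $\alpha=e_j-e_k$:
\begin{itemize}
\item $\alpha$, $\beta$ and $\alpha+\beta$ are positive roots;
\item $\partial_\gamma=-\gamma^{-1}(\idem-\tau_\gamma)$ for each such $\gamma$;
\item $w\,f=f^{w}\,w$ for $f\in\mathbb{Q}(\xl)$;
\item the reflection actions $\tau_\alpha(\beta)=\alpha+\beta$, $\tau_\beta(\alpha)=\alpha+\beta$, $\tau_\alpha(\alpha)=-\alpha$, $\tau_{\alpha+\beta}(\alpha)=-\beta$, and $\tau_{\alpha+\beta}(\beta)=-\alpha$.
\end{itemize}
Hence one symbolic computation, with $\alpha,\beta$ treated as an abstract pair satisfying these relations, handles both orderings of $\{e_i-e_j,e_j-e_k\}$ simultaneously.

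For the first identity, I would first expand $\tau_\alpha\partial_\beta=-(\alpha+\beta)^{-1}(\tau_\alpha-\tau_\alpha\tau_\beta)$. Next I would expand
\[
\beta\,\partial_{\alpha+\beta}\partial_\alpha=\frac{\beta}{(\alpha+\beta)\alpha}(\idem-\tau_\alpha)-\frac{\beta}{(\alpha+\beta)\,\beta}\,\tau_{\alpha+\beta}(\idem-\tau_\alpha).
\]
Here the second term simplifies using $\tau_{\alpha+\beta}(\alpha)^{-1}=-\beta^{-1}$, giving coefficient $-(\alpha+\beta)^{-1}$ in front of $\tau_{\alpha+\beta}-\tau_{\alpha+\beta}\tau_\alpha$. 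Then I would add $\partial_\alpha=-\alpha^{-1}(\idem-\tau_\alpha)$ and collect terms. The coefficient of $\idem$ is $-\alpha^{-1}+\beta/((\alpha+\beta)\alpha)=-(\alpha+\beta)^{-1}$, which matches $\partial_{\alpha+\beta}$. The coefficient of $\tau_\alpha$ should cancel to $0$ by the same partial-fraction identity. The coefficient of $\tau_{\alpha+\beta}$ is $(\alpha+\beta)^{-1}$, again matching. The remaining two $3$-cycle terms, $\tau_\alpha\tau_\beta$ and $\tau_{\alpha+\beta}\tau_\alpha$, should cancel because $\tau_{\alpha+\beta}\tau_\alpha=\tau_\alpha\tau_\beta$, as both equal $\tau_\alpha\tau_\beta\tau_\alpha\tau_\alpha$.

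The second identity is handled identically. The relevant expansions are
\[
\tau_\beta\tau_\alpha\partial_\beta=-(\tau_\beta\tau_\alpha\beta)^{-1}(\tau_\beta\tau_\alpha-\tau_\beta\tau_\alpha\tau_\beta),\qquad \tau_\beta\tau_\alpha\beta=\tau_\beta(\alpha+\beta)=\alpha,
\]
and $\tau_\beta\partial_\alpha=-(\alpha+\beta)^{-1}(\tau_\beta-\tau_\beta\tau_\alpha)$, together with $-\beta\partial_{\alpha+\beta}\partial_\beta$. One then uses the braid relation $\tau_\beta\tau_\alpha\tau_\beta=\tau_{\alpha+\beta}$ and matches coefficients of the six group elements.

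Alternatively, the second identity could be deduced from the first by multiplying on the left by $\tau_\beta$ and using \eqref{eqn:conjrelabel} together with $\tau_\beta\partial_{\alpha+\beta}=\partial_{\alpha}\tau_\beta$-type relabelings. However, sign bookkeeping makes the direct check safer.

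The only real obstacle is bookkeeping of signs and of which root each reflection sends where. This matters especially for the $3$-cycle coefficients and for the terms where a rational coefficient is moved past a reflection. I would organize the computation in a table indexed by the six elements of $\langle\tau_\alpha,\tau_\beta\rangle\cong S_3$, and double-check the result by applying both sides to $x_i$ and to $x_i x_j$ in the case $n=3$.
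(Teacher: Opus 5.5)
Your proof is correct, but it takes a different route from the paper's.

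\textbf{How the two routes differ.} The paper never leaves $\DD_n^\Phi$. It writes $\partial_{\alpha+\beta}=\tau_\alpha\partial_\beta\tau_\alpha$ and substitutes $\tau_\alpha=\idem+\alpha\partial_\alpha$. It then applies the Leibniz rule $\partial_\beta\alpha=(\alpha+\beta)\partial_\beta+\idem$, uses $\tau_\alpha\partial_\alpha=\partial_\alpha$ and $\tau_\alpha(\alpha+\beta)=\beta\tau_\alpha$, and finally recollapses $\tau_\alpha\partial_\beta\partial_\alpha=\partial_{\alpha+\beta}\partial_\alpha$. The second identity is obtained formally from the first. One swaps $\alpha$ and $\beta$ (the hypothesis is symmetric), writes $\alpha=(\alpha+\beta)-\beta$, and uses $(\alpha+\beta)\partial_{\alpha+\beta}=\tau_{\alpha+\beta}-\idem$ and $\tau_{\alpha+\beta}\partial_\beta=\tau_\beta\tau_\alpha\partial_\beta$.

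You instead pass to $\mathbb{Q}(\xl_n)\rtimes S_n$ and compare coefficients on the six elements of $\langle\tau_\alpha,\tau_\beta\rangle$. Your check is mechanical, treats each identity independently, and transfers to the setting of Remark~\ref{rem:KacMoody}, since the Kostant--Kumar nil-Hecke ring sits inside the twisted group algebra. The paper's derivation avoids denominators, shows how the right-hand sides are actually produced from $\partial_{\alpha+\beta}$, and exhibits the second identity as a consequence of the first.

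\textbf{A sign slip in the first identity.} In your expansion of $\beta\,\partial_{\alpha+\beta}\partial_\alpha$, moving $\alpha^{-1}$ past $\tau_{\alpha+\beta}$ gives $\tau_{\alpha+\beta}\alpha^{-1}=-\beta^{-1}\tau_{\alpha+\beta}$. So the second term is $+(\alpha+\beta)^{-1}(\tau_{\alpha+\beta}-\tau_{\alpha+\beta}\tau_\alpha)$, not $-(\alpha+\beta)^{-1}(\tau_{\alpha+\beta}-\tau_{\alpha+\beta}\tau_\alpha)$ as you wrote. With your sign, the coefficient of $\tau_{\alpha+\beta}$ would be $-(\alpha+\beta)^{-1}$. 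The $3$-cycle terms would then sum to $2(\alpha+\beta)^{-1}\tau_\alpha\tau_\beta$ instead of cancelling. The conclusions you state next are the ones that hold with the corrected sign.

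\textbf{The second identity.} You only sketched it, but the computation does close up. Use $\tau_{\alpha+\beta}(\beta)=-\alpha$ and $\tau_{\alpha+\beta}\tau_\beta=\tau_\beta\tau_\alpha$. The right-hand side then has the following coefficients:
\begin{itemize}
\item on $\idem$: $-(\alpha+\beta)^{-1}$;
\item on $\tau_\beta$: $0$;
\item on $\tau_{\alpha+\beta}$: $\alpha^{-1}-\beta/((\alpha+\beta)\alpha)=(\alpha+\beta)^{-1}$;
\item on $\tau_\beta\tau_\alpha$: $-\alpha^{-1}+(\alpha+\beta)^{-1}+\beta/((\alpha+\beta)\alpha)=0$.
\end{itemize}
These match $\partial_{\alpha+\beta}$.
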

\begin{proof}
We begin with  the first identity. Note that $\partial_{\alpha+\beta}=\tau_{\alpha}\partial_{\beta}\tau_{\alpha}$ by \eqref{eqn:conjrelabel}. Writing $\tau_{\alpha}=1+\alpha\partial_{\alpha}$,
\begin{align}\label{eq:manipulation_1}
    \partial_{\alpha+\beta}=\tau_{\alpha}\partial_{\beta}(1+\alpha\partial_{\alpha})=\tau_{\alpha}\partial_{\beta}+\tau_{\alpha}\partial_{\beta}\alpha\partial_{\alpha}.
\end{align}
Leibniz's rule implies that $\partial_{\beta}\alpha=(\alpha+\beta)\partial_{\beta}+1$. So substituting this in~\eqref{eq:manipulation_1} gives
\begin{align}\label{eq:manipulation_2}
    \partial_{\alpha+\beta}=\tau_{\alpha}\partial_{\beta}+\tau_{\alpha}((\alpha+\beta)\partial_{\beta}+1)\partial_{\alpha}.
\end{align}
Now note that $\tau_{\alpha}\partial_{\alpha}=\partial_{\alpha}$ and $\tau_{\alpha}(\alpha+\beta)=\beta\tau_{\alpha}$. This transforms~\eqref{eq:manipulation_2} into
\begin{align}
    \partial_{\alpha+\beta}=\tau_{\alpha}\partial_{\beta}+\partial_{\alpha}+\beta\tau_{\alpha}\partial_{\beta}\partial_{\alpha}.
\end{align}
We conclude by noting $\tau_\alpha\partial_\beta\partial_\alpha=\tau_\alpha\partial_\beta\tau_\alpha\partial_\alpha=\partial_{\alpha+\beta}\partial_\alpha$.

As for the second identity, swapping the roles of $\alpha$ and $\beta$ in the first identity yields
\begin{align}\label{eq:symmetric}
    \partial_{\alpha+\beta}=\tau_\beta\partial_\alpha+\partial_\beta+\alpha\,\partial_{\alpha+\beta}\partial_\beta.
\end{align}
To convert to the form in the statement, write $\alpha=(\alpha+\beta)-\beta$ in the last term:
$$\alpha\,\partial_{\alpha+\beta}\partial_\beta=(\alpha+\beta)\partial_{\alpha+\beta}\partial_\beta-\beta\,\partial_{\alpha+\beta}\partial_\beta=(\tau_{\alpha+\beta}-1)\partial_\beta-\beta\,\partial_{\alpha+\beta}\partial_\beta.$$
We have $\tau_{\alpha+\beta}\partial_\beta=\tau_\beta\tau_\alpha\tau_\beta\partial_\beta=\tau_\beta\tau_\alpha\partial_\beta$, and substituting into~\eqref{eq:symmetric} gives the claimed equality.
\end{proof}

\begin{rem}
\label{rem:KacMoody}
The proof of Theorem~\ref{thm:Magic} holds verbatim in the Kostant--Kumar nil-Hecke ring \cite{KK86} of an arbitrary Kac--Moody group, for any pair of roots $\alpha,\beta$ spanning an $A_2$-configuration.
All the type $A$ input in what follows is therefore concentrated in the combinatorics of noncrossing forests of Section~\ref{subsec:noncrossing forests}. 
Finding analogues of this combinatorics adapted to cluster fans of the other types could be interesting.
\end{rem}

\subsection{New forests from old forests}
\label{subsec:noncrossing forests}

To apply our identity recursively, we need the following result.
\begin{prop}
\label{prop:closure}
Let $G$ be a noncrossing forest with valid ordering $(\gamma_1,\ldots,\gamma_m)$, and assume that 
$\gamma\coloneqq \gamma_m$ is non-simple. Suppose we have a decomposition $\gamma=\alpha+\beta$ into positive roots with $\{\alpha,\beta\}=\{(\gamma_L,j),(j,\gamma_R)\}$ for some $\gamma_L<j<\gamma_R$, with the order determined as follows.
\begin{itemize}
\item if $\gamma_R$ is free then $\alpha=(\gamma_L,j)$ and $\beta=(j,\gamma_R)$
\item if $\gamma_L$ is free, then $\beta=(\gamma_L,j)$ and $\alpha=(j,\gamma_R)$.
\end{itemize}
Then $\beta\perp G\setminus\{\gamma\}$, and each of the following is a valid ordering of a noncrossing forest:
\begin{align*}
  C_1=(G\setminus \{\gamma\})\sqcup\{\alpha\}            &:\quad (\gamma_1,\ldots,\gamma_{m-1},\alpha),\\
  C_2=G\sqcup \{\alpha\}    &:\quad (\gamma_1,\ldots,\gamma_{m-1},\gamma_m,\alpha),\\
  C_3=G\sqcup \{\beta\}     &:\quad (\gamma_1,\ldots,\gamma_{m-1},\gamma_m,\beta),\\
  C_4=G^{\tau_\alpha}=(G\setminus \{\gamma\})^{\tau_\alpha}\sqcup\{\beta\}&:\quad
    (\tau_\alpha(\gamma_1),\ldots,\tau_\alpha(\gamma_{m-1}),\tau_\alpha(\gamma_m))=(\tau_\alpha(\gamma_1),\ldots,\tau_\alpha(\gamma_{m-1}),\beta).
\end{align*}
Finally, if $G$ is a NAF, then so are each of $C_1,C_2,C_3,C_4$.

\end{prop}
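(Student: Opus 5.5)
The whole statement rests on one support computation for $G'\coloneqq G\setminus\{\gamma\}$. After that, each $C_i$ is handled by a short check of noncrossing, acyclicity and the conditions \ref{itA}, \ref{itB}. I would first reduce to the case where $\gamma_R$ is free. The reflection $i\mapsto n+1-i$ of the vertex set reverses all arcs and preserves nesting, separation, bumping, crossing, leaf edges and valid orderings. It also swaps the two bullet cases, so the case where $\gamma_L$ is free follows by symmetry. If both endpoints are free, either convention can be used and the argument below applies. So from now on $\alpha=(\gamma_L,j)$ and $\beta=(j,\gamma_R)$.

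\textbf{Step 1: the key disjointness claim.} I will show that $\supp(G')\cap(\gamma_L,\gamma_R]=\emptyset$, the analogue of \eqref{eqn:G'disjoint}. Since $\omega$ is valid, $\gamma$ is last, so by \ref{itA} no root of $G'$ nests inside $\gamma$, and by \ref{itB} $\gamma$ is a leaf edge of $G$ with $\gamma_R$ free. Let $\delta\in G'$ have an endpoint $p$ with $\gamma_L<p\le\gamma_R$.
\begin{itemize}
\item If $p=\gamma_R$, this contradicts freeness of $\gamma_R$.
\item If $\gamma_L<p<\gamma_R$ and the other endpoint of $\delta$ lies in $[\gamma_L,\gamma_R]$, then $\delta\prec\gamma$ (shared endpoints count as nesting), contradicting minimality.
\item Otherwise the other endpoint lies outside $[\gamma_L,\gamma_R]$, and $\delta$ crosses $\gamma$.
\end{itemize}
Hence both endpoints $j$ and $\gamma_R$ of $\beta$ avoid $\supp G'$, which gives $\beta\perp G'$. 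This is the step I expect to require the most care, since all the nesting conventions (nesting with shared endpoints included) must be used exactly.

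\textbf{Step 2: $C_1,C_2,C_3$.} In each case the last arc added has the endpoint $j\notin\supp(G)$, so adding it adjoins a new leaf vertex. This preserves acyclicity and makes the new arc a leaf edge of the full set, so \ref{itB} holds.

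For noncrossing and \ref{itA}:
\begin{itemize}
\item $\alpha$ and $\beta$ nest with $\gamma$ (they share $\gamma_L$, respectively $\gamma_R$), and $\gamma$ precedes them.
\item Any arc of $G'$ crossing $\alpha$ would need an endpoint in $(\gamma_L,j)$, and none exists by Step 1. For the same reason nothing in $G'$ nests inside $\alpha$, and arcs of $G'$ that contain $\alpha$ already appear earlier.
\item $\beta$ is disjoint from $\supp G'$ and nested in $\gamma$, so it interacts only with $\gamma$.
\end{itemize}

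For the NAF assertion, $\beta$ and $\gamma$ do not bump, and neither do $\alpha$ and $\gamma$. An arc $\delta\in G'$ touching $\alpha$ must touch it at $\gamma_L$. Then $\delta$ shares an endpoint with $\gamma$, so in a NAF $\delta$ nests with $\gamma$. By minimality of $\gamma$ this forces $\delta_L=\gamma_L$ and $\gamma\prec\delta$, hence $\alpha\prec\delta$ as well, so there is no bump.

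\textbf{Step 3: $C_4$.} By Step 1 no point of $\supp G$ lies in $(\gamma_L,j]$. Therefore $\tau_\alpha$ restricted to $\supp G$ is the order-preserving injection $\gamma_L\mapsto j$ fixing all other points, and $\tau_\alpha(\gamma)=(j,\gamma_R)=\beta$. An order-preserving relabeling of the vertices preserves, for every pair of arcs, whether they nest, are separated, bump or cross. It also preserves the graph structure (so acyclicity and leaf edges) and the minimality relations. Consequently the image ordering is valid and $C_4$ is a noncrossing forest, and $C_4$ is a NAF whenever $G$ is. Finally, $C_4=(G')^{\tau_\alpha}\sqcup\{\beta\}$ by construction.
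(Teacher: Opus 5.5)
Your proposal is correct and follows essentially the same route as the paper. Both reduce to the case where $\gamma_R$ is free, derive the support disjointness $\supp(G\setminus\{\gamma\})\cap(\gamma_L,\gamma_R]=\emptyset$ from minimality, freeness and noncrossing, and use it to get $\beta\perp G\setminus\{\gamma\}$ and to check $C_1,C_2,C_3$. Both treat $C_4$ via the order-preserving relabeling $\gamma_L\mapsto j$. The only cosmetic difference is the no-bump check for $\alpha$: you go through minimality to get $\gamma\prec\delta$, while the paper notes directly that a bump at $\gamma_L$ would force $\delta$ to bump $\gamma$.
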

\begin{proof}
Without loss of generality assume $\gamma_R$ free, the other case is handled symmetrically. Since $\gamma$ is minimal in $G$ with free endpoint $\gamma_R$ we have
\begin{equation}
\label{eqn:GG'support}\supp(G)\cap \{\gamma_L+1,\ldots,\gamma_R-1\}=\emptyset\text{ and }\supp(G\setminus \{\gamma\})\cap \{\gamma_L+1,\ldots,\gamma_R\}=\emptyset.
\end{equation}In particular $j,\gamma_R\not\in \supp(G\setminus \{\gamma\})$, and since $\beta=(j,\gamma_R)$ this verifies that $\beta \perp G\setminus \{\gamma\}$.
    By the first part of \eqref{eqn:GG'support} no arc of $G$ crosses either of $\alpha,\beta$, and the endpoints of $\alpha,\beta$ intersect $\supp G$ in at most one element, which immediately implies $C_1$, $C_2$, and $C_3$ are noncrossing and acyclic. The support statement also guarantees that $\alpha,\beta$ can't nest any arcs in $G$, which verifies that the ordering is indeed valid.

    It remains to analyze $C_4$. By the first part of \eqref{eqn:GG'support}, $\tau_{\alpha}=(\gamma_L,j)$ acts on $G$ as the order-preserving relabeling  fixing all elements in $(\supp G)\setminus \{\gamma_L\}$ and taking $\gamma_L\mapsto j$, and hence $C_4=G^{\tau_{\alpha}}$ is a noncrossing forest whose valid ordering is transported from $G$ via $\tau_\alpha$.

    Finally, suppose that $G$ is a NAF. Because $C_1,\ldots,C_4$ are noncrossing forests, to show they are NAFs it suffices to show that no two arcs bump. $C_4$ was produced by applying an order-preserving map to the endpoints of the arcs in $G$ so the result is clear. For $C_1,C_2,C_3$ it remains to show that the new arcs $\alpha,\beta$ do not bump any existing arc $\delta\in G$. $\alpha,\beta$ are nested under $\gamma$ so cannot bump $\gamma$. For $\delta\ne \gamma$ the support statement \eqref{eqn:GG'support} guarantees $j\not\in \{\delta_L,\delta_R\}$, so bumping with $\alpha$ or $\beta$ could only occur at one of the other endpoints $\gamma_L$ or $\gamma_R$ respectively. But this would imply $\delta,\gamma$ bump, contradicting that $G$ is a NAF.
\end{proof}


\subsection{A Graham-positive recursive algorithm}


We now come to our main combinatorial result straightening $u\partial_F\in \DD_n$ into a Graham-positive combination of $\partial_w$ operators. We note that by \Cref{thm:noncrossingequivtoNAF} it would be no more general to consider $u\partial_G^{\omega}$ for $G$ a noncrossing forest with valid ordering $\omega$.
\begin{thm}
\label{thm:upartialFpositive}
    Let $u\in S_n$ and let $F$ be a NAF. 
    Then $u\partial_F\in \DD_n^\Phi$ expands combinatorially into a Graham-positive linear combination of $\{\partial_w\}_{w\in S_n}$.
\end{thm}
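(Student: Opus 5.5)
Throughout, a NAF $F$ is ordered by a valid ordering in which the last root $\gamma=\gamma_m$ is minimal. The proof will be a recursion on pairs $(u,F)$, driven by Theorem~\ref{thm:Magic} and kept inside the class of NAFs by Proposition~\ref{prop:closure}.

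The base case is when every root of $F$ is simple. A simple root is nested only by larger arcs, and in a NAF distinct arcs never share an endpoint unless they nest. Hence the simple roots of $F$ are pairwise separated, and $\partial_F=\partial_v$ for the commuting product $v$ of the corresponding simple reflections. Next we expand $u=\sum_w f_w\partial_w$ with $f_w\in\mathbb{N}[\Phi^+]$ by the AJS--Billey formula (Theorem~\ref{th:ajs-billey}). Then $\partial_w\partial_v$ is either $\partial_{wv}$ or $0$ by the nil-Hecke relations, so $u\partial_F$ is Graham-positive.

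For the inductive step, let $\gamma=\gamma_m$ be non-simple. Split $\gamma=\alpha+\beta$ as in Proposition~\ref{prop:closure}, with free endpoint on the $\beta$ side; for definiteness take $j=\gamma_L+1$. Write $\partial_F=\partial_{F'}\partial_\gamma$ with $F'=F\setminus\{\gamma\}$. Substituting the first identity of Theorem~\ref{thm:Magic} gives
\[
u\partial_F=u\partial_{F'}\tau_\alpha\partial_\beta+u\partial_{F'}\partial_\alpha+u\,\partial_{F'}\beta\,\partial_\gamma\partial_\alpha .
\]
Since $\beta\perp F'$, the factor $\beta$ commutes past $\partial_{F'}$. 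Moving it past $u$ turns it into $\beta^u$. We then rewrite each of the three terms.
\begin{itemize}
\item The first term: by \eqref{eqn:conjrelabel}, $\partial_{F'}\tau_\alpha=\tau_\alpha\partial_{(F')^{\tau_\alpha}}$, so it becomes $(u\tau_\alpha)\partial_{C_4}$.
\item The second term is $u\partial_{C_1}$.
\item The third term is $\beta^u\,u\partial_{C_2}$.
\end{itemize}
All three forests are NAFs by Proposition~\ref{prop:closure}. If $\beta^u\in\Phi^+$, the third coefficient is Graham-positive.

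If instead $\beta^u\in-\Phi^+$, use the second identity of Theorem~\ref{thm:Magic} instead. It gives
\[
u\partial_F=(u\tau_\beta\tau_\alpha)\partial_{\bullet}+(u\tau_\beta)\partial_{\bullet}+(-\beta)^u\,u\partial_{C_3},
\]
where the $\tau$'s are moved left through $\partial_{F'}$ by relabelling as before, and $(-\beta)^u\in\Phi^+$. This sign dichotomy is exactly why the paper stresses the opposite signs of $\beta$ in the two identities. The remaining step is to check that the relabelled forests $(F')^{\tau_\alpha}\cup\{\beta\}$ and $(F')^{\tau_\beta}\cup\{\alpha\}$ are NAFs, or at least noncrossing forests with valid orderings. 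Here $\tau_\beta$ fixes $\supp F'$ because $\beta\perp F'$, so that second forest is just $C_1$. In the other case Theorem~\ref{thm:noncrossingequivtoNAF} converts the forest into the form $u''\partial_{F''}$.

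The main obstacle is termination, since $C_2$ and $C_3$ have more arcs than $F$. The measure I would use is the lexicographic pair
\[
\Bigl(\sum_{\delta\in F}(\delta_R-\delta_L)^{\text{large power}},\;\ell\Bigr)
\]
or simply the degree count. The degree of $u\partial_F$, meaning the number of divided differences, is $|F|$. This grows, but the total $\sum_{\delta}\mathrm{ht}(\delta)$ of heights strictly drops when $\gamma$ is replaced by shorter arcs, with multiplicity. More robustly, the expansions $\sum f_w\partial_w$ are uniquely determined and every $\partial_w$ with $\ell(w)>\binom n2$ vanishes. So one inducts on the weight
\[
N(F)=\sum_{\delta\in F}\bigl(\text{number of nonsimple sub-arcs}\bigr),
\]
or on the multiset of arc lengths ordered by the multiset ordering. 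In that ordering, replacing one $\gamma$ by arcs strictly shorter than $\gamma$ is a decrease, and forests $C_2,C_3$ containing $\gamma$ together with a shorter arc compare as bigger. So instead I would induct on the last-arc length together with $|F|\le n-1$, which is bounded because NAFs are forests. This is the key bound. Nesting forces each new arc to be minimal and strictly shorter, so a recursion that always splits the current minimal non-simple arc terminates after finitely many steps bounded in terms of $n$. Making this potential precise is where I expect the real work to lie.
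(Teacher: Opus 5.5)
Your local step is the same as the paper's, and it is correct: you split the last arc via Proposition~\ref{prop:closure}, choose between the two identities of Theorem~\ref{thm:Magic} by the sign of $\beta^u$, and rewrite the terms as $(u\tau_\alpha)\partial_{C_4}$, $u\partial_{C_1}$, $\beta^u u\partial_{C_2}$ (resp.\ $(u\tau_\beta\tau_\alpha)\partial_{C_4}$, $(u\tau_\beta)\partial_{C_1}$, $-\beta^u u\partial_{C_3}$). The gap is termination, and no better potential on pairs $(u,F)$ can close it, because the recursion as you set it up is not well founded on such pairs. The third term is $\beta^u\,(u\partial_F)\,\partial_\alpha$, which contains the operator you started with. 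For example, take $F=\{(1,3)\}$, $\alpha=(1,2)$, $\beta=(2,3)$. Then $C_2=\{(1,3),(1,2)\}$ has a simple last arc, but $(1,3)$ nests it, so $(1,3)$ can never be last in a valid ordering. Neither your base case (all arcs simple) nor your inductive step (last arc non-simple) applies to $C_2$. The only available move is to peel off the trailing $\partial_{(1,2)}$ and treat $u\partial_{(1,3)}$, which is the original problem; the same happens with $C_3=F\sqcup\{\beta\}$ when $\beta^u\in-\Phi^+$. So your recursion contains the cycle $(u,F)\to(u,C_2)\to(u,F)$, and no measure on pairs $(u,F)$ can strictly decrease along it. In particular none of your candidates works: the sum or multiset of heights goes up at $C_2,C_3$, and the pair ($|F|\le n-1$, height of last arc) decreases at a split but increases when a simple arc is peeled off.

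The missing idea is to keep the peeled-off simple divided differences inside the inductive statement. Prove that $u\,\partial_G^{\omega}\,\partial_w$ is Graham-positive for every noncrossing forest $G$ with valid ordering $\omega$ and every reduced word $w$. If the last arc of $\omega$ is simple, move it into $w$, and discard the term when the new word is not reduced (it is $0$ by the nil-Hecke relations). If the last arc is non-simple, perform your split with $w$ carried along unchanged. The pair $\bigl(\binom n2-\ell(w),\ \operatorname{ht}(\gamma_m)\bigr)$ then drops lexicographically at every step, since the last arcs of $\omega_1,\dots,\omega_4$ are $\alpha$ or $\beta$. The base case $G=\emptyset$ is AJS--Billey applied to $u$, and this is exactly the paper's proof. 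Your remark that long words of divided differences vanish is the right ingredient, but it only gives termination once the trailing word is part of the bookkeeping. For a statement about $u\partial_F$ alone there is nothing for it to act on.
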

\begin{proof}
 We view $F$ as a noncrossing forest $G$ with valid ordering $\omega=(\gamma_1,\dots,\gamma_m)$, so  $u\partial_F=u\partial_G^{\omega}$.

We run an algorithm maintaining the following invariant: the current expansion is a Graham-positive combination of terms $u'\,\partial_{G'}^{\,\omega'}\,\partial_{w'}$, where $G'$ is a noncrossing forest with valid ordering $\omega'=(\gamma_1',\ldots,\gamma_{m'}')$, $u'\in S_n$, and $w'$ is a (possibly empty) reduced word.
Here $\partial_{G'}^{\,\omega'}$ is the composite still to be reduced and $\partial_{w'}$ is the trailing word already extracted; they play distinct roles. Initially $w'=\mathrm{id}$, so we begin with $u\,\partial_G^{\,\omega}$. The algorithm terminates when every $G'=\emptyset$, at which point each term is $u'\partial_{w'}$.

    Let $\gamma'=\gamma_{m'}'$ be the last arc of $\omega'$, so 
    \[
    \partial_{G'}^{\,\omega'}=\partial_{G'\setminus \{\gamma'\}}^{\,\omega'\setminus \{\gamma'\}}\partial_{\gamma'}.
    \]
    If $\gamma'$ is simple then we absorb $\partial_{\gamma'}$ into the trailing $\partial_{w'}$ and recurse on $u'\partial_{G'\setminus \{\gamma'\}}^{\,\omega'\setminus \{\gamma'\}}$. Otherwise $\gamma'=(i,k)$ is non-simple. Proposition~\ref{prop:closure} applies and yields a decomposition $\gamma'=\alpha'+\beta'$ with $\{\alpha',\beta'\}=\{e_i-e_j,e_j-e_k\}$ together with the four noncrossing forests $$C_1=(G'\setminus \{\gamma'\})\sqcup \{\alpha'\},\quad C_2=G'\sqcup \{\alpha'\},\quad C_3=G'\sqcup \{\beta'\},\quad  C_4=(G')^{\tau_{\alpha'}}=(G'\setminus \{\gamma'\})^{\tau_{\alpha'}}\sqcup \{\beta'\},$$ with each $C_i$ carrying the valid ordering $\omega_i$ produced by Proposition~\ref{prop:closure}. Applying $u'\partial_{G'\setminus \{\gamma'\}}^{\,\omega'\setminus \{\gamma'\}}$ on the left to the two identities of Theorem~\ref{thm:Magic} gives
    \begin{align*}
    u'\partial_{G'}^{\,\omega'}&=u'\tau_{\alpha'} \partial_{C_4}^{\,\omega_4}+u'\partial_{C_1}^{\,\omega_1}+(\beta')^{u'}\,u'\partial_{C_2}^{\,\omega_2},
    \\
    u'\partial_{G'}^{\,\omega'}&=u'\tau_{\beta'}\tau_{\alpha'} \partial_{C_4}^{\,\omega_4}+u'\tau_{\beta'} \partial_{C_1}^{\,\omega_1}+\bigl(-(\beta')^{u'}\bigr)\,u'\partial_{C_3}^{\,\omega_3},
    \end{align*}
    where we used $\partial_{G'\setminus \{\gamma'\}}^{\,\omega'\setminus \{\gamma'\}}\tau_{\alpha'}=\tau_{\alpha'}\partial_{(G'\setminus \{\gamma'\})^{\tau_{\alpha'}}}^{\,\omega_4}$ and $\beta'\perp G'\setminus \{\gamma'\}$ (Proposition~\ref{prop:closure}),  and $u'\beta'=(\beta')^{u'}u'$.
    We apply whichever identity makes the coefficient of the last term ($(\beta')^{u'}$ or $-(\beta')^{u'}$) a positive root.

    We iterate, and claim the recursion terminates, i.e.\ every forest is eventually reduced to $\emptyset$. By the nil-Hecke relations a non-reduced trailing word annihilates its term, so we may discard such terms and assume every surviving $w'$ is reduced, whence $\ell(w')\le\binom{n}{2}$. For a root $\gamma$, its height is defined as $\operatorname{ht}(\gamma)=\gamma_R-\gamma_L$. To each surviving term $u'\partial_{G'}^{\omega'}\partial_{w'}$ assign the pair
\[
   \mu=\Bigl(\tbinom{n}{2}-\ell(w'),\ \operatorname{ht}(\gamma'_{m'})\Bigr),
\]
where $\gamma'_{m'}$ is the last arc of $\omega'$ (and $\operatorname{ht}=0$ if $G'=\emptyset$), ordered lexicographically.

A single step strictly decreases $\mu$. If the last arc is simple, $\partial_{\gamma'}$ is absorbed into the trailing word, so $\ell(w')$ increases and the first coordinate of $\mu$ strictly decreases. 
If the last arc is non-simple, then the trailing word is unchanged.
By Proposition~\ref{prop:closure} the last arc of each of $\omega_1,\ldots,\omega_4$ is $\alpha'$ or $\beta'$, both of height strictly less than $\operatorname{ht}(\gamma')$, so the second coordinate strictly decreases.
In either case $\mu$ drops, so every branch terminates.

The result is a Graham-positive combination of terms $u'\partial_{w'}$. Applying the AJS--Billey expansion (Theorem~\ref{th:ajs-billey}) to each $u'$ separately implies the claim.
\end{proof}
\begin{rem}
    The last part of \Cref{prop:closure} guarantees that the algorithm will always produce NAFs at each intermediate stage.
\end{rem}
\begin{rem}
\label{rem:canonical}
Since $\{\partial_w\}_{w\in S_n}$ is a basis of $\DD_n$ as a $\mathbb{Z}[\xl_n]$-module, the coefficients $f^F_w$ in the expansion  \eqref{eqn:upartialF} of $u\partial_F^{\,\omega}$ are uniquely determined. 
In particular, the Graham-positive expansion produced by the algorithm does not depend on the choices made in the recursion. 
\end{rem}

\begin{cor}[{First part of \Cref{thm:mainA}}]
\label{cor:partialFevalpos}
    For a NAF $F$,  $(\partial_F\schub{w})(\tl_{\sigma};\tl_n)$ is combinatorially Graham-positive.
\end{cor}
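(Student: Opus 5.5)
We deduce the corollary from \Cref{thm:upartialFpositive} by rewriting the evaluation at $\tl_\sigma$ as an evaluation at the identity precomposed with the permutation $\sigma$.

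First, $\ev_\sigma = \ev_{\idem}\circ \sigma$ as maps on $\mathbb{Z}[\tl_n][\xl_n]$. Indeed, $\sigma$ acts on $x$-variables by $f(\xl_n;\tl_n)\mapsto f(\xl_\sigma;\tl_n)$, and then setting $x_i\mapsto t_i$ gives $f(\tl_\sigma;\tl_n)$. Since $\sigma$ is $\mathbb{Z}[\tl_n]$-linear, this gives
\[
(\partial_F\schub{w})(\tl_\sigma;\tl_n)=\ev_{\idem}\,\sigma\,\partial_F\,\schub{w}(\xl_n;\tl_n).
\]
This is exactly the image of the operator $\sigma\partial_F\in\DD_n^\Phi$ under the surjection $\DD_n\twoheadrightarrow H_\bullet^T(\fl{n})$, paired against $\schub{w}$.

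Next, apply \Cref{thm:upartialFpositive} with $u=\sigma$. This gives
\[
\sigma\partial_F=\sum_{v} f_v(\alpha_1,\ldots,\alpha_{n-1})\,\partial_v \quad\text{with}\quad f_v\in\mathbb{N}[\Phi^+],
\]
produced combinatorially by the recursive algorithm. Apply both sides to $\schub{w}$ and then apply $\ev_{\idem}$. The coefficient $f_v(\alpha)$ is multiplication by a polynomial in $x_{i+1}-x_i$, and $\ev_{\idem}$ sends it to $f_v(t_2-t_1,\ldots,t_n-t_{n-1})$. This matches the identification of $\alpha_i$ with $t_{i+1}-t_i$, so $\ev_{\idem}$ of a product (polynomial)$\cdot g$ equals $f_v(\tl)\cdot\ev_{\idem}g$ because $\ev_{\idem}$ is a ring homomorphism. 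Kronecker duality \eqref{eqn:KroneckerDual} then gives $\ev_{\idem}\partial_v\schub{w}=\delta_{v,w}$. Hence
\[
(\partial_F\schub{w})(\tl_\sigma;\tl_n)=f_w(t_2-t_1,\ldots,t_n-t_{n-1})\in\mathbb{N}[t_2-t_1,\ldots,t_n-t_{n-1}].
\]

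The only care needed is bookkeeping. The first point is to order the composition correctly: $\sigma$ is applied after $\partial_F$, which matches $u\partial_F$ with $u$ on the left. The second is the sign convention that $\alpha_i=x_{i+1}-x_i$ evaluates to $t_{i+1}-t_i$, which lies in the Graham-positive cone. The combinatorial content is inherited entirely from the algorithm in \Cref{thm:upartialFpositive} together with AJS--Billey.
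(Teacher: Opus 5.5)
Your proposal is correct and is essentially the paper's own argument. The paper states this corollary as an immediate consequence of \Cref{thm:upartialFpositive}: it uses $\ev_\sigma=\ev_{\idem}\sigma$, expands $\sigma\partial_F$ Graham-positively in the $\partial_v$, and reads off the coefficient of $\partial_w$ via $f^\psi_w(\tl_n)=\ev_{\idem}\psi\,\schub{w}$, which is Kronecker duality, with $\psi=\sigma\partial_F$.
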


\begin{eg}
    Let $F=\{e_1-e_4,e_2-e_4\}$ and $u=s_1s_3$.
    Then the output of the algorithm, before the final AJS--Billey step is applied to each $u'$, is
    \[
        u\partial_F=(x_4-x_1)(x_3-x_1)\partial_{2123}+(x_4-x_1)s_1\partial_{123}+(x_3-x_1)\partial_{213}+(x_3-x_1)\partial_{212}+s_1\partial_{13}+s_1\partial_{12}+\partial_{23}.
    \]
\end{eg}

\section{Geometry}

\subsection{Cluster charts and curves}

As mentioned before, we work in $\fl{n}=\gl{n}/B$ with $B$ the upper-triangular Borel, and let $U^-\subset\gl{n}$ be the group of lower unitriangular matrices.
To a NAF $F$ we attach what we call the  \emph{cluster chart}
$$U_F^-\coloneqq\{g\in U^-\suchthat g_{ji}=0\ \text{ whenever }i<j\text{ and }(i,j)\notin F\}\subseteq U^-.$$
This is the coordinate unipotent subgroup of $U^{-}$ with a free parameter in row $j$, column $i$ for each arc $(i,j)\in F$ and zeros in the remaining subdiagonal positions. 
    We write $Y(F)$ for the closure of $U_F^-$.
For a NAF $F$, the roots are linearly independent, so all fillings with $\ast\in \mathbb{C}^*$ are in the same torus-orbit and so $Y(F)$ is a toric variety.
\begin{thm}
\label{thm:degofsigmaYF}
We have
$$\int_{\sigma Y(F)}^T=\ev_{\sigma} \partial_F$$
\end{thm}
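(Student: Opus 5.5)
The plan is to reduce to $\sigma=\idem$ and then induct on the number of arcs of $F$, peeling off an outermost (maximal) arc by a fiber-bundle construction. This mirrors the Bott--Samelson computation of $[X^w]=\ev_{\idem}\partial_w$.

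\emph{Reduction to $\sigma=\idem$.} Assume $\int^T_{Y(F)}=\ev_{\idem}\partial_F$. The operator $\partial_F$ acts only on the $\xl$-variables and commutes with any substitution in the $\tl$-variables. So \eqref{eqn:Snhomology} gives
$$\int^T_{\sigma Y(F)}f=\bigl(\ev_{\idem}\partial_F f(\xl_n;\tl_{\sigma^{-1}})\bigr)\big|_{\tl_n\mapsto\tl_\sigma}=(\partial_F f)(\tl_\sigma;\tl_n)=\ev_\sigma\partial_F f.$$
Hence it suffices to treat $\sigma=\idem$.

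\emph{Base case.} For $F=\{\gamma\}$ a single arc, $Y(F)$ is the $T$-invariant curve joining $\idem$ and $\tau_\gamma$. It is a $\mathbb{P}^1$ with fixed points $\idem$ and $\tau_\gamma$, whose tangent weights are $\mp\gamma$ in our conventions. Atiyah--Bott localization then gives
$$\int^T_{Y(\{\gamma\})}f=\frac{\ev_{\idem}f-\ev_{\tau_\gamma}f}{\text{weight}}=\ev_{\idem}\partial_\gamma f.$$
The sign has to be matched against the convention $t_i\leftrightarrow -e_i$.

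\emph{Inductive step.} Let $\gamma=(i,k)$ be a maximal arc of $F$, first in a valid ordering, so that $\partial_F=\partial_\gamma\partial_{F'}$ with $F'=F\setminus\{\gamma\}$. Let $L_\gamma\cong \mathrm{SL}_2$ be the root subgroup for $\pm\gamma$, with upper Borel $B_\gamma=T\,U_\gamma$. I would form
$$Z=L_\gamma\times^{B_\gamma}Y(F')\to \fl{n},\qquad [g,y]\mapsto gy.$$
I would then check two things. First, $Z\to Y(F)$ is birational: on the open cell it is $U_{-\gamma}\times U^-_{F'}\to U^-_F$, which should be an isomorphism because $F$ has no bumps or crossings, so entries multiply without interacting. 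Second, $\pi:Z\to L_\gamma/B_\gamma\cong\mathbb{P}^1$ is a fiber bundle with fiber $Y(F')$. Equivariant pushforward along $\pi$, followed by the base case for $\mathbb{P}^1$, gives
$$\int^T_{Y(F)}f=\int^T_{\mathbb{P}^1}\pi_*(\cdot)=\ev_{\idem}\partial_\gamma\bigl(\partial_{F'}f\bigr).$$
The pushforward identification would be done by localizing at the two fixed points of $\mathbb{P}^1$. The fiber over $\idem$ is $Y(F')$, and the fiber over $\tau_\gamma$ is $\tau_\gamma Y(F')$. The translation formula from the first step then converts $\int_{\tau_\gamma Y(F')}$ into $\ev_{\tau_\gamma}\partial_{F'}$. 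Precisely, I need that fiberwise integration sends $f$ to the polynomial $\partial_{F'}f$, in the $\xl$-variables, whose restrictions to the two fixed points of $\mathbb{P}^1$ are $\ev_{\idem}\partial_{F'}f$ and $\ev_{\tau_\gamma}\partial_{F'}f$. The divided-difference formula for $\mathbb{P}^1$ then yields $\ev_{\idem}\partial_\gamma\partial_{F'}f$.

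\emph{Main obstacle.} The hard step is that $Y(F')$ must be $B_\gamma$-stable, i.e.\ $U_\gamma$-stable, for $Z$ to be well defined. Commutators of $U_\gamma$ with the lower root groups $U_{-\delta}$, $\delta\in F'$ nested under $\gamma$, can produce root groups outside $U^-_{F'}$. I would first try to show these commutators land in $B$ or act trivially on the closure; this uses that arcs of $F'$ never share the endpoint $i$ or $k$ in a bumping way. If stability fails, the fallback is to bypass $Z$. One option is to realize the bundle with the parabolic-free construction $U_{-\gamma}$-saturation of $\tau$-translates. The other is to compute $\int^T_{Y(F)}$ directly by localization on the toric variety $Y(F)$: its moment polytope is governed by the cone on $-F$, which is unimodular since NAF roots form part of a cluster basis. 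One then verifies that the resulting rational sum over $T$-fixed points equals $\ev_{\idem}\partial_F$ by expanding $\partial_F$ as $\prod\frac{1-\tau_\alpha}{-\alpha}$ in nesting order.
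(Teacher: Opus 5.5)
Your route is correct, and after the common reduction to $\sigma=\idem$ it differs from the paper's. The step you flag as the main obstacle goes through directly, so the fallbacks are unnecessary.

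Take $L_\gamma=T\cdot\langle U_\gamma,U_{-\gamma}\rangle$, so that it contains $B_\gamma=TU_\gamma$. For $g'=I+N'\in U^-_{F'}$ we have $N'^2=0$, because a nonzero product of two elementary matrices in $N'$ would come from two bumping arcs. Hence $g'^{-1}=I-N'$ and
\[
g'^{-1}E_{ik}\,g'=E_{ik}+E_{ik}N'-N'E_{ik}-N'E_{ik}N'.
\]
The correction terms sit in positions $(i,a)$, $(b,k)$ and $(b,a)$, where $(a,k),(i,b)\in F'$. Maximality of $\gamma$ forces $i<a$ and $b<k$. Since $(i,b)$ and $(a,k)$ neither cross nor bump, also $b<a$. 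So the conjugate is strictly upper triangular. Thus $U_\gamma$ fixes $U^-_{F'}B/B$ pointwise, hence fixes $Y(F')$, so $Y(F')$ is $B_\gamma$-stable and $Z$ is well defined.

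The remaining steps check out as follows.
\begin{itemize}
\item Birationality: $(I+sE_{ki})g'=g'+sE_{ki}$, because no arc of $F'$ ends at $i$.
\item Image: $Z$ is proper and irreducible, so $\mu(Z)=Y(F)$.
\item Sign: the tangent weight $-\gamma$ at $\idem$ corresponds to $t_i-t_k$ under $t_j\leftrightarrow -e_j$, which is exactly the denominator in $\ev_{\idem}\partial_\gamma$.
\item Pushforward: you only need the restrictions of $\pi_*\mu^*f$ to the two fixed points of $L_\gamma/B_\gamma\cong\mathbb{P}^1$, not a global identification with a polynomial in $\xl$. The fiber over $\tau_\gamma$ is $T$-equivariantly $\tau_\gamma Y(F')$, so localization on the base gives $\int^T_{Y(F)}f=\ev_{\idem}\partial_\gamma\partial_{F'}f$.
\end{itemize}

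By comparison, the paper inducts on both $n$ and $|F|$ using two cited facts. If some vertex is unused, it deletes that vertex via the Bergeron--Sottile pattern embedding $\Psi^-_i$. Otherwise some \emph{minimal} arc $\gamma=(i,i+1)$ is simple, and it peels this arc off using $Y(F)=\pi_i^*(\pi_i)_*Y(F\setminus\gamma)$ and the BGG identity $\int_{\pi_i^*\pi_{i*}Z}=\int_Z\partial_i$. That is a \emph{right} action by a minimal parabolic, which exists only for simple roots. This is why the paper has to manufacture simple arcs by deleting vertices.

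Your \emph{left} $\mathbb{P}^1$-fibration works for any positive root. So you peel off outermost arcs of arbitrary length at fixed $n$, with no pattern maps. The price is the stability check above, plus the inductive hypothesis at the translate $\tau_\gamma$, which your $\sigma$-reduction supplies. Your version also shows where each NAF axiom is used: nonbumping for birationality, and maximality, noncrossing and nonbumping for stability. The paper's version uses only off-the-shelf ingredients, and its vertex-deletion/simple-arc structure is the same one reused in its appendix.
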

\begin{proof}
   By \eqref{eqn:Snhomology} it suffices to prove the result for $\sigma=\idem$. We need two facts.
    \begin{enumerate}
        \item (\cite{BGG73}) Let $\pi_i:\fl{n}\to \GL_n/P_i$ be the projection, where $P_i$ is the $i$'th minimal parabolic subgroup. Then for $Z\subset \fl{n}$ we have $\int_{\pi^*\pi_*Z}^T=\int_Z^T\partial_i$.
        \item (\cite[Proposition 12.8]{BGNST2}) Let $\Psi_{i}^{-}$ be the inclusion of $\fl{n-1}\hookrightarrow \fl{n}$ obtained by the Bergeron--Sottile pattern map inclusion \cite{BS98}
        $$\begin{bmatrix}A_{(i-1)\times (i-1)}&B_{(i-1)\times (n-i)}\\C_{(n-i)\times (i-1)}&D_{(n-i)\times (n-i)}\end{bmatrix}\mapsto \begin{bmatrix}A_{(i-1)\times (i-1)}&0_{(i-1)\times 1}&B_{(i-1)\times (n-i)}\\
        0_{1\times (i-1)}&1&0_{1\times (n-i)}\\
        C_{(n-i)\times (i-1)}&0_{(n-i)\times 1}&D_{(n-i)\times (n-i)}\end{bmatrix}.$$
        Then for $\rope{i}^-f\coloneqq f(x_1,\ldots,x_{i-1},t_i,x_{i},\ldots,x_{n-1};\tl_n)$ we have $\int_{\Psi_i^-Z}^Tf=\int_Z^{T'}\rope{i}^-f$, where $T'$ is the diagonal torus in $\fl{n-1}$ with standard characters  $t_1,\ldots,t_{i-1},t_{i+1},\ldots,t_n$, and $\int_Z^{T'}$ is extended $\mathbb{Z}[\tl_n]$-linearly.
    \end{enumerate}
    Now, given a NAF $F$, if there is an element $i$ not used in any arc, then there is an associated NAF $F'$ on $n-1$ elements obtained by deleting $i$ and $\Psi_i Y(F')=Y(F)$. Then
    $$\int_{Y(F)}^Tf=\int_{Y(F')}^{T'}\rope{i}^-f=\ev_{\idem}^{T'}\partial_{F'}\rope{i}^-f=\ev_{\idem}^T\partial_F f.$$
    On the other hand if every element of $\{1,\ldots,n\}$ is an arc endpoint, then there must be some $\gamma=e_{i}-e_{i+1}\in F$. In this case, we note that $U_{F}^-$ can be obtained from $U_{F\setminus \gamma}^-$ by adding arbitrary multiples of the $(i+1)$'st column $e_{i+1}$ to the previous column. Because the $(i+1,i+1)$ entry is the only nonzero entry in row and column $i+1$ in $U_{F\setminus\gamma}^-$, this shows that $Y(F)=(\pi_i^*)(\pi_i)_*Y(F\setminus \gamma)$, and so
    \begin{equation*}
        \int_{Y(F)}^Tf=\int_{Y(F\setminus \gamma)}^T\partial_if=\ev_{\idem}\partial_{F\setminus \gamma}\partial_if=\ev_{\idem}\partial_F f.\qedhere
    \end{equation*}
\end{proof}
\begin{cor}[{First part of \Cref{thm:mainB}}]
    We have a combinatorially Graham-positive expansion
    $$[\sigma\cdot Y(F)]=\sum a^u_{F,\sigma}(\tl_n)[X^u]\text{ with }a^u_{F,\sigma}(\tl_n)=(\partial_F\schub{u})(\tl_{\sigma};\tl_n)\in \mathbb{N}[t_2-t_1,\ldots,t_n-t_{n-1}].$$
\end{cor}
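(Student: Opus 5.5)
The plan is to combine \Cref{thm:degofsigmaYF} with the Kronecker duality between Schubert cycles and double Schubert polynomials, and then read off positivity from \Cref{cor:partialFevalpos}.

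First, since $\{[X^u]=\ev_{\idem}\partial_u\}$ is a $\mathbb{Z}[\tl_n]$-basis of $H_\bullet^T(\fl{n})$ dual to $\{\schub{u}\}$ by \eqref{eqn:KroneckerDual}, any class $[Z]$ expands as $\sum_u\bigl(\int_Z^T\schub{u}\bigr)[X^u]$. To see this, pair both sides with $\schub{w}$ and use $\ev_{\idem}\partial_u\schub{w}=\delta_{u,w}$. Applying this to $Z=\sigma\cdot Y(F)$, the coefficient of $[X^u]$ is $\int^T_{\sigma Y(F)}\schub{u}$. By \Cref{thm:degofsigmaYF} this equals $\ev_\sigma\partial_F\schub{u}=(\partial_F\schub{u})(\tl_\sigma;\tl_n)$, which is exactly $a^u_{F,\sigma}(\tl_n)$.

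Second, positivity follows from \Cref{cor:partialFevalpos}. To make the combinatorial nature explicit, write $\ev_\sigma=\ev_{\idem}\circ\sigma$ as operators on $\mathbb{Z}[\tl_n][\xl_n]$; here $\sigma$ acts on $x$-variables, so $\ev_{\idem}(\sigma f)=f(\tl_\sigma;\tl_n)$. \Cref{thm:upartialFpositive} then expands $\sigma\partial_F=\sum_w f^F_w(\alpha)\partial_w$ with $f^F_w\in\mathbb{N}[\Phi^+]$. Applying this to $\schub{u}$ and evaluating at $\ev_{\idem}$ gives $\sum_w f^F_w(\tl)\,\delta_{w,u}=f^F_u(\tl)$. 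Under the identification $\alpha_i\mapsto t_{i+1}-t_i$ (from $\alpha_i=x_{i+1}-x_i$ at $\ev_{\idem}$), this lands in $\mathbb{N}[t_2-t_1,\ldots,t_n-t_{n-1}]$.

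The only delicate point is bookkeeping. We must check that $\ev_{\idem}$ commutes past the coefficient polynomials: they are multiplication operators in $\xl$, so they evaluate to the same polynomials in $\tl$. We must also check that the sign convention for $\alpha_i$ matches Graham-positivity, and that the left translate in \eqref{eqn:Snhomology} corresponds to the left factor $\sigma$ rather than $\sigma^{-1}$. The latter is already absorbed into \Cref{thm:degofsigmaYF}, so no new obstacle arises.
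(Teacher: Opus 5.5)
Your proposal is correct and follows the paper's route. You read off the coefficient of $[X^u]$ by pairing with $\schub{u}$ via the Kronecker duality \eqref{eqn:KroneckerDual}, identify it with $\ev_\sigma\partial_F\schub{u}$ by \Cref{thm:degofsigmaYF}, and get Graham-positivity by writing $\ev_\sigma=\ev_{\idem}\circ\sigma$ and applying the expansion of $\sigma\partial_F$ from \Cref{thm:upartialFpositive} (which is exactly \Cref{cor:partialFevalpos}); your checks on the convention $\alpha_i\mapsto t_{i+1}-t_i$ and on $\sigma$ versus $\sigma^{-1}$ in \eqref{eqn:Snhomology} are also right.
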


We now compute the combinatorial Graham-positive expansion for $T$-invariant curves. Denote $\mathcal{C}_{ij}$ the unique $T$-invariant curve connecting $\idem$ to $(i,j)$. Then every $T$-invariant curve in $\fl{n}$ is of the form $w\cdot \mathcal{C}_{ij}$ for some $w,i,j$.

\begin{thm}
    We have $\int_{u\cdot \mathcal{C}_{ij}}^Tf(\xl_n;\tl_n)=\ev_u \partial_{ij}f(\xl_n;\tl_n)$.
\end{thm}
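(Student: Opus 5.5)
Since $\{e_i-e_j\}$ is a single arc, it is trivially a NAF. The plan is to recognize $\mathcal{C}_{ij}$ as the cluster-chart closure $Y(\{e_i-e_j\})$ and then invoke \Cref{thm:degofsigmaYF} with $\sigma=u$ and $F=\{e_i-e_j\}$. That theorem gives $\int_{u\cdot Y(F)}^T=\ev_u\partial_F=\ev_u\partial_{ij}$, which is the claim.

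\textbf{Step 1: identify the curve.} Take $F=\{(i,j)\}$. The chart $U_F^-$ is the one-parameter subgroup $\{I+cE_{ji}\suchthat c\in\mathbb{C}\}$, acting on the base flag $B/B$. It is $T$-stable, with $T$ acting on $c$ by the character $-(e_i-e_j)$. Its closure $Y(F)$ is therefore a $T$-invariant curve isomorphic to $\mathbb{P}^1$, containing the fixed point $\idem$ at $c=0$. As $c\to\infty$ the flag spanned by the columns of $I+cE_{ji}$ has limit $\tau_{(i,j)}B/B$. To see this, rescale column $i$ by $c^{-1}$: it tends to $e_j$, while the other columns stay standard basis vectors. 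Hence $Y(F)$ is a $T$-invariant curve connecting $\idem$ and $(i,j)$.

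\textbf{Step 2: uniqueness.} I expect this to be the only point needing care, and even it is standard. In $\fl{n}$ the $T$-invariant curves through a fixed point $w$ are in bijection with the reflections, i.e.\ with the $T$-weights of $T_w\fl{n}$. These weights are pairwise non-proportional, so $\fl{n}$ is GKM and there is exactly one $T$-invariant curve joining $w$ and $w\tau_{ij}$. It follows that $\mathcal{C}_{ij}=Y(\{e_i-e_j\})$. Translating by the left action gives $u\cdot\mathcal{C}_{ij}=u\cdot Y(\{e_i-e_j\})$, since left translation by $u\in S_n\subset N(T)$ carries $T$-invariant curves to $T$-invariant curves.

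\textbf{Step 3: conclude.} Apply \Cref{thm:degofsigmaYF}, with $\partial_F=\partial_{e_i-e_j}=\partial_{ij}$. As a sanity check, one could instead run the proof of \Cref{thm:degofsigmaYF} directly. Repeatedly delete unused vertices with the pattern maps $\Psi^-_k$, which preserve $Y(F)$ up to relabeling, until the arc becomes simple. For the simple arc, use $Y(\{e_1-e_2\})=\pi_1^*\pi_{1*}\{\idem\}$ together with the BGG fact $\int^T_{\pi_1^*\pi_{1*}Z}=\int^T_Z\partial_1$. Pulling back through the substitutions $\rope{k}^-$ then recovers $\partial_{ij}$.
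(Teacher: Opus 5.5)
Your proposal is correct and matches the paper's proof, which is just the observation $\mathcal{C}_{ij}=Y(\{e_i-e_j\})$ followed by \Cref{thm:degofsigmaYF} with $\sigma=u$ and $F=\{e_i-e_j\}$; you additionally verify that observation, which the paper leaves implicit. One minor slip in Step 1: after rescaling column $i$ by $c^{-1}$, the limiting matrix has $e_j$ in both columns $i$ and $j$, so it is not invertible and does not yet represent a flag. To reach the permutation matrix of $(i,j)$, also replace column $j$ by $-c$ times (column $j$ minus the rescaled column $i$), which equals $e_i$ and is a legitimate right $B$-move since $i<j$; alternatively, argue directly with the subspaces $\mathcal{F}_k$.
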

\begin{proof}
This follows immediately from \Cref{thm:degofsigmaYF} and the observation that $\mathcal{C}_{ij}=Y(\{e_i-e_j\})$.
\end{proof}
\begin{cor}
    For $\alpha=e_i-e_j$ the $T$-invariant curve $w\cdot \mathcal{C}_{ij}$ connecting $w$ to $w(i,j)$ has a combinatorially Graham-positive expansion
    $$[w\cdot \mathcal{C}_{ij}]=\sum a^{w\cdot \mathcal{C}_{ij}}_u(\tl_n)[X^u]\text{ with }a^{w\cdot \mathcal{C}_{ij}}_u(\tl_n)=(\partial_{ij}\schub{u})(\tl_w;\tl_n)\in \mathbb{N}[t_2-t_1,\ldots,t_n-t_{n-1}].$$
\end{cor}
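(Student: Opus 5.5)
The corollary follows by combining the preceding theorem with the Graham-positivity algorithm of \Cref{thm:upartialFpositive}, applied to the one-arc NAF $F=\{e_i-e_j\}$. Since a single arc never bumps or crosses anything, $F$ is a NAF. The preceding theorem gives $\int^T_{w\cdot\mathcal{C}_{ij}}=\ev_w\partial_{ij}$ as an element of $H_\bullet^T(\fl{n})$. Expanding in the Kronecker dual basis $\{\ev_{\idem}\partial_u\}=\{[X^u]\}$ of \eqref{eqn:KroneckerDual}, the coefficient of $[X^u]$ is obtained by pairing with $\schub{u}$. It is therefore $(\ev_w\partial_{ij})\schub{u}=(\partial_{ij}\schub{u})(\tl_w;\tl_n)$, which establishes the displayed formula for $a_u^{w\cdot\mathcal{C}_{ij}}$.

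For positivity, I would write $\ev_w=\ev_{\idem}\circ w$, where $w$ acts by permuting the $x$-variables. This identifies $\ev_w\partial_{ij}$ with the image of the operator $w\,\partial_{e_i-e_j}\in\DD_n^\Phi$ under the surjection $\DD_n\twoheadrightarrow H_\bullet^T(\fl{n})$. Here one must check that the conventions agree, namely that $\ev_{\idem}(w\cdot f)=f(\tl_w;\tl_n)$ with $(w\cdot f)(\xl_n)=f(\xl_w)$. This is exactly the convention used for the $S_n$-action in the introduction.

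\Cref{thm:upartialFpositive} with $u=w$ and $F=\{e_i-e_j\}$ gives a combinatorial expansion $w\partial_{ij}=\sum_u f_u(\alpha_1,\ldots,\alpha_{n-1})\partial_u$ with $f_u\in\mathbb{N}[\Phi^+]$. The map $\DD_n^\Phi\to H_\bullet^T(\fl{n})$ sends $\alpha_k\mapsto t_{k+1}-t_k$ and $\partial_u\mapsto[X^u]$. By uniqueness of the expansion (\Cref{rem:canonical}), $a_u^{w\cdot\mathcal{C}_{ij}}=f_u(t_2-t_1,\ldots,t_n-t_{n-1})\in\mathbb{N}[t_2-t_1,\ldots,t_n-t_{n-1}]$.

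The only step needing care is the sign and convention bookkeeping: $t_i$ corresponds to $-e_i$, and $\alpha_k=x_{k+1}-x_k$ evaluates to $t_{k+1}-t_k$ under $\ev_{\idem}$. The algorithm itself is already established, so nothing beyond \Cref{cor:partialFevalpos} specialized to a single arc is needed.
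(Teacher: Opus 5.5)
Your proposal is correct and follows the paper's route. The corollary comes from combining $\int^T_{w\cdot\mathcal{C}_{ij}}=\ev_w\partial_{ij}$ (the one-arc case $\mathcal{C}_{ij}=Y(\{e_i-e_j\})$ of \Cref{thm:degofsigmaYF}) with \Cref{cor:partialFevalpos}, which is the algorithm of \Cref{thm:upartialFpositive} applied to $w\,\partial_{ij}$, and your bookkeeping ($\ev_w=\ev_{\idem}\circ w$, $\alpha_k\mapsto t_{k+1}-t_k$, $\partial_u\mapsto[X^u]$) matches the paper's conventions.
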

\subsection{Standard Coxeter Richardsons and generic $T$-orbit closures}
Let $c'\le s_{n-1}s_{n-2}\cdots s_1$.
By \cite[Theorem 6.1]{nst_c}, to each Richardson variety $X^{wc'}_w$ with $\ell(w)+\ell(c')=\ell(wc')$ there are a NAF $F$ and a permutation $u_F$, both determined by $c'$, such that $(wu_F)^{-1}X^{wc'}_w=Y(F)$.
\begin{cor}
We have a combinatorially Graham-positive expansion 
    $$[X^{wc'}_w]=\sum a^{wc'}_{w,u}(\tl_n)[X^u]\text{ with }a^{wc'}_{w,u}(\tl_n)=[\schub{wc'}](\schub{u}\schub{w})\in \mathbb{N}[t_2-t_1,\ldots,t_n-t_{n-1}]$$
    a double generalized Littlewood--Richardson coefficient.
\end{cor}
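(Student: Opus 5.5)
We will reduce the corollary to \Cref{thm:degofsigmaYF} and the first part of \Cref{thm:mainB}, using the identification $(wu_F)^{-1}X^{wc'}_w=Y(F)$ from \cite[Theorem 6.1]{nst_c}. The argument has two parts: first show that the coefficients are Graham-positive, then identify them as double Littlewood--Richardson coefficients.

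\textbf{Positivity.} Set $\sigma\coloneqq wu_F$, so that $X^{wc'}_w=\sigma\cdot Y(F)$. Applying \Cref{thm:degofsigmaYF} gives $\int^T_{X^{wc'}_w}=\ev_{\sigma}\partial_F$. The first part of \Cref{thm:mainB} (equivalently \Cref{cor:partialFevalpos}) then gives
$$[X^{wc'}_w]=\sum_u (\partial_F\schub{u})(\tl_{\sigma};\tl_n)\,[X^u],$$
and each coefficient is combinatorially Graham-positive. The combinatorial algorithm is the one of \Cref{thm:upartialFpositive}, applied to $\sigma\partial_F$ and followed by AJS--Billey.

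\textbf{Identification with $[\schub{wc'}](\schub{u}\schub{w})$.} The coefficient of $[X^u]$ in any class $[X]$ is $\int_X^T\schub{u}$, by Kronecker duality \eqref{eqn:KroneckerDual}. So it remains to show
$$\int^T_{X^{wc'}_w}\schub{u}=a^{wc'}_{w,u}(\tl).$$
We use the standard Richardson fact $[X^v_w]=[X^v]\cap[X_w]$. Here $[X_w]$ is Poincar\'e dual to $\schub{w}$ in $H_T^\bullet(\fl{n})$, under the paper's convention that $X_w=w_\circ X^w$ and that opposite Schubert classes are represented by double Schubert polynomials. This gives
$$\int^T_{X^{v}_w}\schub{u}=\int^T_{X^v}\schub{w}\schub{u}.$$
Since $\int^T_{X^v}=\ev_{\idem}\partial_v$, the right side equals $\ev_{\idem}\partial_v(\schub{w}\schub{u})$. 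Expanding $\schub{w}\schub{u}=\sum_{v'}a^{v'}_{w,u}\schub{v'}$ and applying \eqref{eqn:KroneckerDual} leaves exactly $a^{v}_{w,u}(\tl)$. Taking $v=wc'$ finishes the proof.

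\textbf{Main obstacle.} The step most likely to need care is matching conventions in the Richardson cap-product identity. One must check which of $\schub{w}$, $\schub{w}(\xl;\tl_{w_\circ})$ or their variants represents $[X_w]$, given that $t_i$ corresponds to $-e_i$. If a twist by $w_\circ$ appears, it would be absorbed into the evaluation formula \eqref{eqn:Snhomology}. Graham-positivity itself does not depend on this matching, since it follows directly from \Cref{thm:mainB}. The identification is only needed to name the coefficient, and as a consistency check it agrees with Graham's positivity of $a^v_{w,u}$.
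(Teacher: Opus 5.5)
Your proposal is correct and matches the paper's implicit argument. Positivity comes from writing $X^{wc'}_w=(wu_F)\cdot Y(F)$ via \cite[Theorem 6.1]{nst_c} and applying \Cref{thm:degofsigmaYF} and the first part of \Cref{thm:mainB}; the identification with $a^{wc'}_{w,u}(\tl)$ is the standard Richardson fact $[X^v_w]=[X^v]\cap[X_w]$, which the paper only quotes in the introduction and you spell out.

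The convention worry you flag resolves with no $w_\circ$-twist. The opposite Schubert variety you need is $X_w=\overline{B^-wB/B}=w_\circ X^{w_\circ w}$, not $w_\circ X^w$ taken literally, which has the wrong dimension. It meets $X^v$ only when $w\le v$, transversally in the single point $w$ when $v=w$, so its class is Kronecker dual to the $[X^v]$ and equals $\schub{w}(\xl;\tl)$ by \eqref{eqn:KroneckerDual}.
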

Recall that a generic $T$-orbit closure has moment polytope the permutahedron $$\operatorname{Perm}_n\coloneqq \operatorname{conv}\{w\cdot (\lambda_1,\ldots,\lambda_n)\suchthat w\in S_n\}$$
for $\lambda_1> \cdots > \lambda_n$ a dominant regular weight. By the Atiyah--Bott localization formula we have
$$\int_{X_{Perm}}f(\xl_n;\tl_n)=\sum_{w\in S_n}\frac{f(\tl_w;\tl)}{\prod_{i=1}^{n-1}(t_{w(i)}-t_{w(i+1)})}$$

This last expression is known as the \emph{divided symmetrization} of $f$, and is studied by Postnikov. 
As shown by \cite{HHMP,Li24}, there is a $T$-invariant degeneration $$X_{Perm}\rightsquigarrow \bigcup_{w\in S_n\text{ and }w(n)=n}X^{wc}_w$$
where $c=s_{n-1}s_{n-2}\cdots s_1=(n,n-1,\ldots,1)$ and each component is a toric variety appearing with multiplicity $1$.
\begin{cor}
    $[X_{Perm}]$ has a combinatorially Graham-positive expansion. In particular divided symmetrization of a double Schubert polynomial is Graham-positive.
\end{cor}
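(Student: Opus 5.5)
The plan is to reduce the class of the generic $T$-orbit closure to a sum of classes of standard Coxeter Richardson varieties, each of which is already known to be combinatorially Graham-positive. The Graham-positivity of divided symmetrization will then follow by pairing with double Schubert polynomials.

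\textbf{Step 1: decompose the class.} I will use the $T$-invariant degeneration of \cite{HHMP,Li24}
$$X_{Perm}\rightsquigarrow \bigcup_{w\in S_n,\ w(n)=n}X^{wc}_w .$$
Since it is a flat $T$-equivariant family whose special fibre is reduced, with each component appearing with multiplicity one, the $T$-equivariant Borel--Moore fundamental class is constant along it. This gives
$$[X_{Perm}]=\sum_{w\in S_n,\ w(n)=n}[X^{wc}_w]\in H_\bullet^T(\fl{n}),$$
which is the content of \cite[Theorem 6.4]{HHMP}. I also need that the index set $\{w:w(n)=n\}$ coincides with $\{w:\ell(wc)=\ell(w)+\ell(c)\}$ for $c=s_{n-1}\cdots s_1$, so that the previous corollary applies. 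This is a direct check: right multiplication by $c$ cyclically moves the value $w(n)$ to the front, and the length adds exactly when no inversion is destroyed, i.e.\ when $w(n)=n$.

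\textbf{Step 2: positivity of each summand.} For each such $w$, \cite[Theorem 6.1]{nst_c} with $c'=c$ gives a NAF $F$ and a permutation $u_F$ with $X^{wc}_w=(wu_F)\cdot Y(F)$. By Theorem~\ref{thm:degofsigmaYF} we have $\int^T_{X^{wc}_w}=\ev_{wu_F}\partial_F$. Theorem~\ref{thm:upartialFpositive}, applied to $wu_F\,\partial_F$ and followed by $\ev_{\idem}$, expands this class combinatorially as a Graham-positive combination of the $[X^u]$. Summing over $w$ preserves combinatorial Graham-positivity, which proves the first claim.

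\textbf{Step 3: divided symmetrization.} Write $[X_{Perm}]=\sum_u f_u[X^u]$ with $f_u\in\mathbb{N}[\Phi^+]$. By the Kronecker duality \eqref{eqn:KroneckerDual}, $\int^T_{X_{Perm}}\schub{u}=f_u$. The Atiyah--Bott formula identifies the left side with the divided symmetrization of $\schub{u}(\xl_n;\tl_n)$, so that divided symmetrization is Graham-positive.

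\textbf{Main obstacle.} The step needing most care is the homological equality in Step~1: one must confirm that the degeneration is $T$-equivariant and flat and that each component has multiplicity one, so that the classes agree in equivariant (not merely ordinary) homology. I would cite \cite[Theorem 6.4]{HHMP} for this rather than reprove it. A secondary point is to check that the sign convention in the Atiyah--Bott denominators $t_{w(i)}-t_{w(i+1)}$ matches the convention $t_i\leftrightarrow -e_i$, so that the localization formula is stated for the same class $[X_{Perm}]$.
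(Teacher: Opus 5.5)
Your proposal is correct and follows the paper's route: the paper's proof consists of the identity $[X_{Perm}]=\sum_{w(n)=n}[X^{wc}_w]$ coming from the degeneration of \cite{HHMP,Li24}, combined with the preceding corollary on Coxeter Richardson varieties. Your Steps 2 and 3 unpack that corollary (via \cite[Theorem 6.1]{nst_c}, Theorem~\ref{thm:degofsigmaYF} and Theorem~\ref{thm:upartialFpositive}) and make explicit the link, through Kronecker duality and the Atiyah--Bott formula, between this expansion and divided symmetrization, which the paper leaves implicit.
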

\begin{proof}
    $[X_{Perm}]=\sum_{w\in S_n\text{ with }w(n)=n}[X^{wc}_w]$.
\end{proof}

\begin{rem}
More combinatorially, it was shown in \cite[Section 9]{nst_c} that divided symmetrization is
$$f(\xl_n;\tl_n)\mapsto \ropeik{1}{1}\topeik{1}{2}(\topeik{1}{3}+\topeik{2}{3})\cdots (\topeik{1}{n}+\topeik{2}{n}+\cdots + \topeik{n-1}{n})f(\xl_n;\tl_n)$$
in the notation of Appendix~\ref{sec:decorated}. Expanding the product, each summand is a valid composite in the sense of Appendix~\ref{sec:decorated}, which one rewrites into the canonical form $w\partial_F$ using the commutation relations of the augmented Thompson monoid together with the three-term relations of Corollary~\ref{th:two_three_term_relations}; this exhibits each as $w\partial_F$ for various $w,F$.
\end{rem}

\subsection{Torus-orbit closures in the Grassmannian}
Consider the Richardson variety $X_\mu^\lambda\subset \operatorname{Gr}(r;n)$ where the skew shape $\lambda/\mu$ is a ribbon. We recall from \cite[Section 10]{nst_c} how this is a toric variety we can realize as $\pi_*(X^{wc'}_w)=X_\mu^\lambda$ for $w,c'$ (see also \cite{QGr}). Let $b_\mu$, $b_\lambda$ be the binary sequences encoding $\mu$ and $\lambda$ (the sequence $1^r0^{n-r}$ representing the identity). Then because $\lambda/\mu$ is a ribbon we can find $c'$ such that $c'b_\lambda=b_\mu$, where $c'\in S_n$ is a product of backwards cycles on disjoint intervals and $|\lambda/\mu|=\ell(c')$. Then we let $w$ be the permutation in one-line notation obtained by replacing the $1$'s in $b_\mu$ with the numbers $1,2,\ldots,r$ in some order, the $0$'s in $b_\mu$ that are not the $k$ right endpoints of cycles with $r+1,\ldots,n-k$ in some order, and then remaining $0$'s in $b_\mu$ that are the right endpoints of cycles with $n-k+1,\ldots,n$. Since $\pi(X^{wc'}_w)\subseteq X^\lambda_\mu$ and the two varieties have the same dimension, $\pi$ restricts to a generically finite surjection onto $X^\lambda_\mu$; as it is birational, $\pi_*[X^{wc'}_w]=[X^\lambda_\mu]$.

\begin{cor}
    $uX^\lambda_\mu$ has a combinatorially Graham-positive expansion into Grassmannian Schubert cycles
    $$[uX^\lambda_\mu]=\sum_\nu f_\nu(\tl_n)[X^\nu]\in H_\bullet^T(\operatorname{Gr}(r;n))\text{ with }f_\nu(\tl_n)\in \mathbb{N}[\Phi^+]$$
\end{cor}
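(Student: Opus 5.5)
The plan is to push the flag-variety result forward along the projection $\pi:\fl{n}\to\operatorname{Gr}(r;n)$. First I would use the construction recalled just above. Choose $c'$ with $c'b_\lambda=b_\mu$, and the permutation $w$ built from $b_\mu$, so that $\pi_*[X^{wc'}_w]=[X^\lambda_\mu]$. Since $\pi$ is $\GL_n$-equivariant, and in particular $S_n$-equivariant, the same holds after translating: $\pi_*[uX^{wc'}_w]=[uX^\lambda_\mu]$.

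Next I would write $uX^{wc'}_w$ as a translated cluster toric variety. By \cite[Theorem 6.1]{nst_c}, $X^{wc'}_w=(wu_F)\,Y(F)$ for a NAF $F$ and a permutation $u_F$ determined by $c'$. Hence $uX^{wc'}_w=\sigma\cdot Y(F)$ with $\sigma=uwu_F$.

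Then I would apply \Cref{thm:degofsigmaYF} together with \Cref{thm:upartialFpositive}. These give
$$[\sigma Y(F)]=\sum_{v}(\partial_F\schub{v})(\tl_\sigma;\tl_n)[X^v],$$
and each coefficient is combinatorially in $\mathbb{N}[\Phi^+]$.

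The final step is to push this expansion forward. Here I would use the standard fact that $\pi_*[X^v]=[X^{\nu}]$ when the minimal coset representative of $vW_P$ is the Grassmannian permutation $w_\nu$ with $\ell(v)=\ell(w_\nu)$, that is, when $v$ is itself $r$-Grassmannian. Otherwise the fibres are positive-dimensional and $\pi_*[X^v]=0$. Pushforward is $\mathbb{Z}[\tl_n]$-linear, so
$$f_\nu(\tl_n)=(\partial_F\schub{w_\nu})(\tl_\sigma;\tl_n),$$
which is a specialization of the flag coefficients and therefore remains combinatorially Graham-positive. Equivalently, via Kostant--Kumar duality, $\int^T_{uX^\lambda_\mu}g=\ev_\sigma\partial_F\,\pi^*g$ for Grassmannian classes $g$. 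Pairing with the Grassmannian Schubert basis, whose pullbacks are the double Schubert polynomials $\schub{w_\nu}$, gives the same formula.

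The main obstacle is making sure the dictionary between the paper's conventions and the equivariant Grassmannian Schubert basis matches. Specifically, I need $X^v$, the closure of $BvB/B$, to map birationally onto the Grassmannian Schubert cycle $X^\nu$ exactly when $v=w_\nu$, and to collapse in dimension otherwise. This holds because $\dim X^v=\ell(v)>\ell(w_\nu)=\dim \pi(X^v)$ for non-minimal $v$. I would also check that the birationality of $\pi|_{X^{wc'}_w}$ survives the translation by $u$, which is immediate because $u$ acts by automorphisms compatible with $\pi$.
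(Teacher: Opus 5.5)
Your proposal is correct and follows the paper's route: you push the Graham-positive flag-variety expansion of $[uX^{wc'}_w]=[\sigma\cdot Y(F)]$ forward along $\pi$, using $\mathbb{Z}[\tl_n]$-linearity and $\pi_*[X^v]=[X^{\lambda_v}]$ for $r$-Grassmannian $v$ (and $0$ otherwise). Your explicit choice $\sigma=uwu_F$ and the formula $f_\nu(\tl_n)=(\partial_F\schub{w_\nu})(\tl_\sigma;\tl_n)$ spell out what the paper leaves implicit; note this is a subset of the flag coefficients rather than a ``specialization'' of them.
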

\begin{proof}
If $w\in S_n$ is an $r$-Grassmannian permutation, i.e. $\des{w}\subset \{r\}$, let $\lambda_w$ be the partition which has $b_{\lambda_w}=w\cdot (1^r0^{n-r})$. The projection map $\pi:\fl{n}\to \operatorname{Gr}(r;n)$ is $\mathbb{Z}[\tl_n]$-linear on $T$-equivariant Borel-Moore homology, and
$$\pi_*[X^w]= \begin{cases}[X^{\lambda_w}]&\des{w}\subset \{r\}\\0&\text{otherwise},\end{cases}$$
so the result follows from the Graham-positive expansion for $[uX^{wc'}_w]\in H_\bullet^T(\fl{n})$.
\end{proof}
\begin{cor}
    If $\Lambda\in \operatorname{Gr}(r;n)$ is a linear subspace whose associated matroid is a lattice path matroid, then there is a combinatorially Graham-positive expansion
    $$[u\cdot \overline{T\cdot \Lambda}]=\sum a_\lambda(\tl_n)[X^\lambda]\in H_\bullet^T(\operatorname{Gr}(r;n))\text{ with }a_\lambda(\tl_n)\in \mathbb{N}[\Phi^+]$$
\end{cor}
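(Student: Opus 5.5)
The plan is to reduce the corollary to the preceding one about translated ribbon Richardson varieties $uX^\lambda_\mu$. The introduction says the lattice path case follows directly from the ribbon case, so the work is to show that $u\cdot\overline{T\cdot\Lambda}$ has a class that is a nonnegative sum of classes $[u'X^{\lambda'}_{\mu'}]$ with ribbon $\lambda'/\mu'$. Let $M$ be the lattice path matroid of $\Lambda$ on the ground set permuted by $u$, and pass to the untranslated matroid. By definition, $M$ is the matroid whose bases are the lattice paths between two boundary paths $b_\mu$ and $b_\lambda$, with $\mu\subseteq\lambda$. So $\Lambda$ lies in the open Richardson cell of $X^\lambda_\mu$, and its matroid polytope is the full Richardson (positroid) polytope. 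Hence $\overline{T\cdot\Lambda}$ is a generic $T$-orbit closure in $X^\lambda_\mu$.

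\textbf{Step 1 (reduction by connected components).} A lattice path matroid is a direct sum of connected lattice path matroids, matching the decomposition of $\lambda/\mu$ into edge-connected pieces. The $T$-orbit closure then splits as a product inside a product of smaller Grassmannians, embedded by a coordinate-subspace map. I would treat each connected piece separately, and I expect the disconnected case to follow by the usual compatibility of torus-orbit classes with direct sums.

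\textbf{Step 2 (degenerate the generic orbit closure).} This is the Grassmannian analogue of the degeneration $X_{Perm}\rightsquigarrow\bigcup_{w(n)=n}X^{wc}_w$ of \cite{HHMP,Li24}. I would show that $\overline{T\cdot\Lambda}$ degenerates $T$-equivariantly, with multiplicity one, to a union of $T$-stable toric varieties of the form $vX^{\lambda'}_{\mu'}$, where each $\lambda'/\mu'$ is a ribbon and $v$ ranges over suitable permutations. The cleanest route is to project the flag variety degeneration itself: $\Lambda$ lifts to a generic point of a translated Richardson variety in $\fl{n}$. The permutahedron-type subdivision then pushes forward under $\pi$, via the same argument as the corollary for ribbons, $\pi_*[X^w]\in\{[X^{\lambda_w}],0\}$. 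On the polytope side, I would aim to show that the lattice path matroid polytope is subdivided (a regular subdivision) into matroid polytopes of ribbon lattice path matroids, possibly translated by permutations. The top-dimensional cells of such a subdivision correspond to toric components with multiplicity one, so in $H^T_\bullet$ we get
\[
[\overline{T\cdot\Lambda}]=\sum_{\text{cells}}[v\,X^{\lambda'}_{\mu'}].
\]

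\textbf{Step 3 (conclude).} Translate by $u$, using that the left $S_n$-action is linear on $H^T_\bullet$ after the substitution of \eqref{eqn:Snhomology}. Then apply the preceding corollary to each $[uvX^{\lambda'}_{\mu'}]$. Summing combinatorially Graham-positive expansions gives one, since $\mathbb{N}[\Phi^+]$ is closed under addition.

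The main obstacle is Step 2: producing an explicit multiplicity-free subdivision of a lattice path matroid polytope into translated ribbon Richardson polytopes that is realized by a $T$-equivariant flat degeneration. My first attempt would be to pull back to $\fl{n}$ and use the HHMP degeneration along the fibers of $\pi$. The alternative is a direct inductive slicing of the skew shape along the cells where the two boundary paths are far apart: replace $\lambda/\mu$ by ribbons obtained by pinning intermediate lattice paths, each cut being a hyperplane split of the matroid polytope.
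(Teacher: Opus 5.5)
Your reduction target is the right one: the paper also deduces this corollary from the ribbon case. The gap is Step 2. You flag it yourself as the main obstacle and leave it unresolved, and it is where the whole proof lives.

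The missing idea is that no flat degeneration is needed. The class $[\overline{T\cdot x}]\in H_\bullet^T(\operatorname{Gr}(r;n))$ depends only on the matroid of $x$, and it is additive over matroid polytope subdivisions. Precisely, if the matroid polytope of $M$ is subdivided into the matroid polytopes of $M_1,\ldots,M_k$, then the Schubert coefficients satisfy $a^M_\lambda(\tl_n)=\sum_i a^{M_i}_\lambda(\tl_n)$. This follows from the equivariant $K$-theoretic inclusion--exclusion in \cite[Proof of Proposition 8.3]{BF22}. It uses only the combinatorial subdivision: no regularity, no realization over a valued field, and no multiplicity-one flat limit.

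The subdivision you propose to build by slicing also already exists. \cite[Lemma 4.3.5]{bidkhori_2010} subdivides every lattice path matroid polytope into snake (ribbon) lattice path matroid polytopes. The generic orbit closures of these snake matroids are exactly the toric ribbon Richardson varieties. Applying $u$ carries this to a subdivision into translated snake polytopes. Hence the coefficients of $[u\cdot\overline{T\cdot\Lambda}]$ are sums of the coefficients of classes $[uX^{\lambda'}_{\mu'}]$ with $\lambda'/\mu'$ a ribbon, and the preceding corollary finishes. That is the paper's entire proof.

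Neither of your two routes closes the gap as written.

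The lift-to-$\fl{n}$ route cannot work beyond the uniform matroid. Suppose $\Lambda$ is not uniform. For any flag lifting $\Lambda$, the flag matroid polytope is strictly contained in the permutahedron: the permutahedron vertex indexed by $w$ lies in it only if $\{w(1),\ldots,w(r)\}$ is a basis of $\Lambda$. So the lifted orbit closure is never the generic one that is degenerated in \cite{HHMP,Li24}. This matches the paper's remark that its argument does not extend to $\fl{n}$ beyond the fully generic case.

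The slicing route would reprove Bidkhori's lemma. However, your claim that ``top-dimensional cells correspond to toric components with multiplicity one'' still presupposes a $T$-equivariant degeneration realizing that particular subdivision, which you have not constructed. Additivity of the class over subdivisions is exactly what replaces that degeneration.

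Finally, Step 1 is unnecessary, because the ribbon corollary already allows disconnected ribbons: $c'$ may be a product of backwards cycles on disjoint intervals. Your appeal to ``the usual compatibility of torus-orbit classes with direct sums'' is also not, as stated, an operation known to preserve combinatorial Graham-positivity of Schubert expansions in $\operatorname{Gr}(r;n)$.
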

\begin{proof}
    We have $u\cdot \overline{T\cdot \Lambda}=\overline{T\cdot (u\Lambda)}$, where $u\Lambda$ has as its matroid an $S_n$-translate of a lattice path matroid. The class  $$[\overline{T\cdot \Lambda}]=\sum a^\Lambda_\lambda(\tl_n)[X^\lambda]\in H_\bullet^T(\operatorname{Gr}(r;n))$$
is known to only depend on the underlying matroid $M$ of $\Lambda$, and to be an additive function of matroid polytopes (there is a more refined statement holding in $T$-equivariant $K$-theory concerning exact inclusion/exclusion with respect to lower dimension polytopes, see \cite[Proof of Proposition 8.3]{BF22}), which in particular implies if the matroid polytope of $M$ is subdivided into the matroid polytopes of $M_1,\ldots,M_k$, then $\sum a^{M_i}_\lambda(\tl_n)=a^M_\lambda(\tl_n)$. By \cite[Lemma 4.3.5]{bidkhori_2010} there is a subdivision of any lattice path matroid polytope into snake matroid polytopes, and translating by $S_n$ subdivides every translated lattice path matroid polytope into translated snake matroid polytopes. Therefore the coefficients for the translated lattice path matroid are sums of the coefficients for the corresponding translated snake matroids as desired.
\end{proof}

\begin{rem}
    As a Schubert matroid is a special case of a lattice path matroid when $\mu=\emptyset$, by \cite{DF10} we now know the combinatorial Graham-positivity for torus-orbit closures associated to a collection of matroids that has the property that the class of any other $T$-orbit closure is obtainable via inclusion--exclusion. Unfortunately this inclusion--exclusion does not preserve Graham-positivity.
\end{rem}
\section{Coproduct}
\label{sec:coproduct}
In this section we work with noncrossing forests with valid orderings -- this is equivalent to working with NAFs by \Cref{thm:noncrossingequivtoNAF}. 
\subsection{The coproduct and the long-range Leibniz rule}

Viewing the divided difference algebra $\DD_n$ as a left $\mathbb{Z}[\xl_n]$-algebra, it carries a coassociative coproduct 
$$\Delta\colon\DD_n\longrightarrow \DD_n\otimes_{\mathbb{Z}[\xl_n]} \DD_n,$$
obtained by Kostant--Kumar \cite{KK86} as the restriction to $\DD_n$ of the natural coproduct on the twisted group algebra $\mathbb{Q}(\xl_n)\rtimes S_n$, under which every $w\in S_n$ is group-like. 
For $g\in \mathbb{Z}[\xl_n]$ and $w\in S_n$ the coproduct is determined by the following identities (in fact just the last one and $\mathbb{Z}[\xl_n]$-linearity suffice):
$$\Delta(g)=g(\idem\otimes\idem)=g\otimes \idem=\idem\otimes g\qquad \Delta(w)=w\otimes w\qquad \Delta(\partial_i)=\partial_i\otimes \idem+s_i\otimes \partial_i=\partial_i\otimes s_i+\idem\otimes \partial_i.$$
It encodes the action of $\DD_n$ on products: if $\Delta(\Phi)=\sum\Phi_{(1)}\otimes\Phi_{(2)}$ then $\Phi(fg)=\sum\Phi_{(1)}(f)\,\Phi_{(2)}(g)$.
One sees that the value on $\partial_i$ is the Leibniz rule $\partial_i(fg)=\partial_i(f)\,g+s_i(f)\,\partial_i(g)$. 
Via the surjection $\DD_n\twoheadrightarrow H_\bullet^T(\fl{n})$ this descends to the coproduct on $H_\bullet^T(\fl{n})$ induced by pushforward along the diagonal.
One key observation that we shall employ is that the Leibniz rule continues to have the same form even for long-range divided differences:
\begin{equation}
\label{eqn:longrangeleibniz}
\Delta(\partial_\gamma)=\partial_\gamma\otimes \idem+\tau_\gamma\otimes \partial_\gamma=\partial_\gamma\otimes \tau_\gamma+\idem\otimes \partial_\gamma\qquad\text{for every }\gamma\in\Phi^+.
\end{equation}

\subsection{Noncrossing forests and the coproduct formula}
Our goal is to compute the coproduct $\Delta(\partial_F)$ where $F$ is a NAF.
The computation is done in two steps: expand it into a sum of pure tensors, then rewrite each tensor factor in the canonical form $u\,\partial_{F'}$, for some permutation $u$ and NAF $F'$.

Let $(\gamma_1,\ldots,\gamma_m)$ be a valid ordering of $F$. Since $\Delta$ is a $\mathbb{Z}[\xl_n]$-algebra homomorphism, the Leibniz rule \eqref{eqn:longrangeleibniz} implies:
\begin{equation}
\label{eqn:LSRS}\Delta(\partial_F)=\prod_{t=1}^{m}\big(\partial_{\gamma_t}\otimes\idem+\tau_{\gamma_t}\otimes\partial_{\gamma_t}\big)=\sum_{S\subset \{1,\ldots,m\}}L_S\otimes R_S\end{equation}
where for $S=\{p_1<\cdots<p_k\}$ we have
$$L_S=\partial_{\gamma_1}\cdots\partial_{\gamma_{p_1-1}}\,\tau_{\gamma_{p_1}}\,\partial_{\gamma_{p_1+1}}\cdots\tau_{\gamma_{p_k}}\,\partial_{\gamma_{p_k+1}}\cdots\partial_{\gamma_m},\qquad R_S=\partial_{\gamma_{p_1}}\cdots\partial_{\gamma_{p_k}}.$$
The second factor is already in final form: $R_S=\partial_{F|_S}$, where $F|_S\coloneqq\{\gamma_s:s\in S\}$ is the sub-forest of selected arcs with the induced ordering $\omega|_S\coloneqq(\gamma_{p_1},\ldots,\gamma_{p_k})$. A sub-forest of a NAF is a NAF, so $R_S$ requires no further work.
\begin{lem}
    If $G_1\sqcup \{\gamma\} \sqcup G_2$ is a noncrossing forest with valid ordering $\omega=(\omega_{G_1},\gamma,\omega_{G_2})$, then $G_1^{\tau_\gamma}G_2$ is a noncrossing forest with valid ordering $\omega_{G_1}^{\tau_\gamma},\omega_{G_2}$.
\end{lem}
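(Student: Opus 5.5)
The plan is to follow the proof of Theorem~\ref{thm:noncrossingequivtoNAF} and the $C_4$ case of Proposition~\ref{prop:closure}. First I will show that $\tau_\gamma$ acts on $G_1$ as an order-preserving relabeling of endpoints. Then I will check the three required properties in turn: noncrossing, acyclic, and the two conditions \ref{itA}, \ref{itB} of Definition~\ref{def:valid}. Each check will compare against the original forest $G_1\sqcup\{\gamma\}\sqcup G_2$. Write $\gamma=(a,b)$; by symmetry, assume that $b$ is free in $G_1\cup\{\gamma\}$, which \ref{itB} guarantees for $a$ or $b$.

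\textbf{Step 1: $\tau_\gamma$ relabels $G_1$.} By \ref{itA}, any arc nested under $\gamma$ comes after $\gamma$, so it lies in $G_2$. Combined with noncrossing and freeness of $b$, this gives $\supp(G_1)\cap\{a+1,\ldots,b\}=\emptyset$, exactly as in \eqref{eqn:G'disjoint}. Hence $\tau_\gamma$ acts on $\supp G_1$ as the order-preserving relabeling $a\mapsto b$. Consequently $G_1^{\tau_\gamma}$ consists of positive roots, is itself a noncrossing forest, and its transported ordering $\omega_{G_1}^{\tau_\gamma}$ satisfies \ref{itA} and \ref{itB} internally.

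\textbf{Step 2: noncrossing and nesting against $G_2$.} Only arcs $\delta\in G_1$ with endpoint $a$ change. These are $\delta=(c,a)$ with $c<a$, which becomes $(c,b)$, or $\delta=(a,d)$ with $d>b$, which becomes $(b,d)$. For $\varepsilon\in G_2$, I will do a short case analysis on the position of $\varepsilon_L,\varepsilon_R$ relative to $c<a<b$ (respectively $a<b<d$). Each configuration in which $\varepsilon$ crosses $\delta^{\tau_\gamma}$, or strictly contains it, forces one of the following in the original forest: $\varepsilon$ crosses $\delta$; $\varepsilon$ crosses $\gamma$; or $\varepsilon$ contains $\delta$ or $\gamma$ while coming later in $\omega$, which violates \ref{itA}. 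For example, $c<\varepsilon_L<b<\varepsilon_R$ forces $\varepsilon$ to cross $\delta$ (if $\varepsilon_L<a$), to contain $\gamma$ (if $\varepsilon_L=a$), or to cross $\gamma$ (if $a<\varepsilon_L<b$). The same analysis yields condition \ref{itA} across the two blocks: no arc of $G_1^{\tau_\gamma}$ is nested under an arc of $G_2$.

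\textbf{Step 3: acyclicity and the leaf condition.} As a graph, $G_1^{\tau_\gamma}\cup G_2$ is obtained from the original forest by deleting $\gamma$ and reattaching the $G_1$-edges at $a$ to $b$. Suppose the new graph contained a cycle. If it avoids one of $a,b$, it already gives a cycle in the original graph. If it passes through both $a$ and $b$, inserting the edge $\gamma$ splits it into two genuine cycles in the original graph. Either way acyclicity is contradicted, so the new graph is a forest.

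For \ref{itB}, take an arc $\varepsilon$ of $G_2$ and its prefix. In the new prefix, every vertex other than $b$ has degree at most its degree in the old prefix. The vertex $b$ cannot have been the free endpoint of $\varepsilon$ originally, because $\gamma$ occupies it. Therefore $\varepsilon$'s free endpoint stays free, and $\varepsilon$ remains a leaf edge.

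The main obstacle is the bookkeeping in Step 2. Every case must be reduced cleanly to a forbidden configuration in the original forest, and the degenerate cases where $\varepsilon$ shares an endpoint with $a$ or $b$ must be handled using \ref{itA} rather than noncrossing.
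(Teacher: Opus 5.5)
Your argument is correct up to one omitted check, but Step 2 takes a more laborious route than the paper.

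\textbf{Comparison with the paper.} The paper reasons from the viewpoint of the later arc, not the moved one. Take $\beta\in G_2$ with (say) $\beta_R$ free in its prefix. Then \textup{(A)}, \textup{(B)} and noncrossing force every arc before $\beta$ in $\omega$, in particular $\gamma$ and each $\alpha\in G_1$, to avoid $\{\beta_L+1,\ldots,\beta_R\}$. Since $\supp(\alpha^{\tau_\gamma})\subseteq\supp\{\alpha,\gamma\}$, the arc $\alpha^{\tau_\gamma}$ avoids this interval too. That one observation rules out $\alpha^{\tau_\gamma}=\beta$, crossing, and $\alpha^{\tau_\gamma}\prec\beta$, and it shows $\beta_R$ stays free in the new prefix. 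So the bookkeeping you identify as the main obstacle never arises.

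Your case split does work: I checked that every crossing or strict containment of $\delta^{\tau_\gamma}$ by $\varepsilon$ reduces to one of your three forbidden configurations. Your Step 1 is the same order-preserving-relabeling argument the paper uses. What your version gains is explicitness, not generality.

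\textbf{The missing check.} You never verify $\varepsilon\neq\delta^{\tau_\gamma}$. This is needed for $G_1^{\tau_\gamma}\sqcup G_2$ to be a disjoint union, and hence for the concatenated list to be an ordering at all. Your Step 2 list does not cover it. Take $\delta=(a,d)$ and $\varepsilon=(b,d)$: in the original forest $\varepsilon$ is nested in $\delta$ and bumps $\gamma$, so it crosses nothing and contains nothing. It is excluded only because $\{\gamma,\delta,\varepsilon\}$ is a triangle, i.e.\ $\varepsilon$ violates \textup{(B)}.

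Your own Step 3 degree count settles this at once. The free endpoint $v\neq b$ of $\varepsilon$ lies on no arc of $G_1^{\tau_\gamma}$: degrees at $v\neq a,b$ are unchanged, and $a\notin\supp G_1^{\tau_\gamma}$. You should state this.

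\textbf{The cycle argument.} It is unnecessary. Once \textup{(B)} holds for the concatenated ordering, each arc attaches a new vertex, so acyclicity is automatic; this is why the paper does not check it separately. As written it is also imprecise. A cycle avoiding $a$ that meets $b$ through one moved and one unmoved edge becomes a cycle of the original graph only after inserting $\gamma$.
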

\begin{proof}
The fact that $G_1^{\tau_\gamma}$ is noncrossing with valid ordering $\omega_{G_1}^{\tau_\gamma}$ follows from \Cref{prop:closure} applied to $G_1\sqcup \{\gamma\}$ with valid ordering $(\omega_{G_1},\gamma)$. To establish $G_1^{\tau_\gamma}\sqcup G_2$ is a noncrossing forest we have to rule out that a pair of arcs in $\alpha^{\tau_\gamma}\in G_1^{\tau_\gamma}$ and $\beta\in G_2$ are identically equal or cross. Suppose without loss of generality that $\beta_R$ is the free endpoint of $\beta$ in the prefix of $G_1\sqcup \{\gamma\}\sqcup G_2$ that it lies in. Then because $\supp(\alpha^{\tau_\gamma})\subset \supp\{\alpha,\gamma\}$, we have \begin{equation}
\label{eqn:suppalphataugamma}\supp \{\alpha^{\tau_\gamma}\}\cap \{\beta_L+1,\ldots,\beta_R\}=\emptyset\end{equation}
which rules out $\alpha^{\tau_\gamma}=\beta$ and prevents $\alpha^{\tau_\gamma}$ from crossing $\beta$.

We now show that $\omega_{G_1}^{\tau_\gamma}\omega_{G_2}$ is a valid ordering. Because $\omega_{G_1}^{\tau_\gamma}$ and $\omega_{G_2}$ are valid orderings, it suffices to show that $\alpha^{\tau_\gamma}$ cannot be nested under $\beta$, and that $\beta$ is a leaf edge in the prefix of $G_1^{\tau_\gamma}G_2$ that ends in it with $\beta_R$ as its free endpoint. But these both follow from \eqref{eqn:suppalphataugamma}.
\end{proof}
\begin{cor}
\label{cor:LSfix}
    $L_S=u\partial_{F'}$ for some permutation $u\in S_n$ and NAF $F'$.
\end{cor}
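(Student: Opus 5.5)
Our plan is to push every transposition in $L_S$ to the far left, using the preceding lemma repeatedly, and then invoke \Cref{thm:noncrossingequivtoNAF}. We argue by induction on $k=|S|$. For $k=0$ we have $L_\emptyset=\partial_F$, with $u=\idem$ and $F'=F$. For the inductive step, write $S=\{p_1<\cdots<p_k\}$ and set $G_1=\{\gamma_1,\ldots,\gamma_{p_1-1}\}$, $\gamma=\gamma_{p_1}$ and $G_2=\{\gamma_{p_1+1},\ldots,\gamma_m\}$. The ordering $(\omega_{G_1},\gamma,\omega_{G_2})$ is then the given valid ordering of $F=G_1\sqcup\{\gamma\}\sqcup G_2$. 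Applying \eqref{eqn:conjrelabel} to each factor of $\partial_{\gamma_1}\cdots\partial_{\gamma_{p_1-1}}$ in turn gives
\[
\partial_{\gamma_1}\cdots\partial_{\gamma_{p_1-1}}\tau_\gamma=\tau_\gamma\,\partial_{\gamma_1^{\tau_\gamma}}\cdots\partial_{\gamma_{p_1-1}^{\tau_\gamma}},
\]
and this is legitimate because each $\gamma_i^{\tau_\gamma}$ is a positive root. Positivity follows from \Cref{prop:closure} (the $C_4$ analysis): since $\gamma$ comes after all of $G_1$ in a valid ordering, it is minimal and a leaf edge in $G_1\sqcup\{\gamma\}$, so $\tau_\gamma$ acts on $\supp G_1$ as an order-preserving relabeling.

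After this move, $L_S=\tau_\gamma\,L'$, where $L'$ is precisely the expression $L_{S'}$ attached to the noncrossing forest $G_1^{\tau_\gamma}\sqcup G_2$, with $S'$ the image of $\{p_2,\ldots,p_k\}$ after deleting position $p_1$. By the preceding lemma, $\omega_{G_1}^{\tau_\gamma},\omega_{G_2}$ is a valid ordering of this forest, so $G_1^{\tau_\gamma}\sqcup G_2$ satisfies the same hypotheses as $F$, except that it is only a noncrossing forest and not necessarily a NAF. Accordingly we run the induction over noncrossing forests with valid orderings, proving the claim $L_S=u\,\partial^{\omega''}_{G''}$ for some $u$ and some noncrossing forest $G''$ with valid ordering $\omega''$. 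This is consistent with the convention stated at the start of the section. Since $L'$ has one fewer transposition, induction gives $L'=u'\partial^{\omega''}_{G''}$, and therefore $L_S=(\tau_\gamma u')\partial^{\omega''}_{G''}$.

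Once every transposition has been removed, \Cref{thm:noncrossingequivtoNAF} gives $\partial^{\omega''}_{G''}=v\,\partial_{F'}$ with $F'$ a NAF, and hence $L_S=(\tau_\gamma u' v)\partial_{F'}$.

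We expect the main obstacle to be the bookkeeping that keeps the induction inside the right class. One must check that the transported ordering really is the induced ordering on the new forest, and that the positions in $S$ stay aligned after $\gamma$ is deleted. One must also check that when some later $\gamma_{p_i}$ belongs to $G_2$, it is left untouched by the conjugation, which holds because only $G_1$ is relabeled. Once the preceding lemma is granted, each individual step is formal.
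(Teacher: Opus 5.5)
Your proposal is correct and is essentially the paper's proof: you pull $\tau_{\gamma_{p_1}},\ldots,\tau_{\gamma_{p_k}}$ leftward in turn via \eqref{eqn:conjrelabel}, use the preceding lemma to stay within noncrossing forests with valid orderings, and finish with \Cref{thm:noncrossingequivtoNAF}; your induction on $|S|$ just makes the paper's iteration explicit. One small correction: positivity of each $\gamma_i^{\tau_\gamma}$ comes from your direct minimal-leaf-edge relabeling argument (as in the proof of \Cref{thm:noncrossingequivtoNAF}), not from \Cref{prop:closure}, which relabels by $\tau_\alpha$ and assumes $\gamma$ is non-simple.
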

\begin{proof}
Applying the lemma iteratively, if $G_1\sqcup \{\gamma_{p_1}\}\sqcup G_2\sqcup \{\gamma_{p_2}\}\sqcup \cdots \sqcup \{\gamma_{p_k}\}\sqcup G_{k+1}$ has valid ordering $\omega_{G_1}\sqcup \{\gamma_1\}\sqcup \cdots \sqcup \{\gamma_k\}\sqcup \omega_{G_{k+1}}$, then $G'=G_1^{s_{\gamma_{p_1}}\cdots s_{\gamma_{p_k}}}G_2^{s_{\gamma_{p_2}}\cdots \gamma_{p_k}}\cdots G_{k}^{\gamma_{p_k}}G_{k+1}$ is a noncrossing forest with valid ordering $\omega'=\omega_{G_1}^{\tau_{\gamma_{p_1}}\cdots \tau_{\gamma_{p_k}}}\omega_{G_2}^{\tau_{\gamma_{p_2}}\cdots \tau_{\gamma_k}}\cdots \omega_{G_k}^{\tau_{\gamma_{p_k}}}\omega_{G_{k+1}}$.
    By writing $$L_S=\partial_{G_1}^{\omega_{G_1}}\tau_{\gamma_{p_1}}\cdots \partial_{G_k}^{\omega_{G_k}}\tau_{\gamma_{p_k}}\partial_{G_{k+1}}^{\omega_{G_{k+1}}}$$
we can use the identity \eqref{eqn:conjrelabel} iteratively to pull out each $\tau_{p_i}$ from $L_S$ to the left from the $i=1$ to $k$ and obtain $L_S=\tau_{p_1}\cdots\tau_{p_k}\partial_{G'}^{\omega'}$ as desired. We then can express this as $u\partial_{F'}$ for some permutation $u$ and NAF $F$ by \Cref{thm:noncrossingequivtoNAF}.
\end{proof}
\begin{thm}[{Second parts of \Cref{thm:mainA} and \Cref{thm:mainB}}]
\label{thm:coproduct}
If $F$ is a NAF and $\sigma\in S_n$ there is an expansion
$$\Delta(\sigma\partial_F)=\sum_{S\subset F}(\sigma u_{F,S}^{(1)}\partial_{G_{F,S}^{(1)}})\otimes (\sigma u_{F,S}^{(2)}\partial_{G_{F,S}^{(2)}})$$
where $u_{F,S}^{(i)}$ are permutations depending on $S$ and $F$, and $G^{(i)}_{F,S}$ are NAFS depending on $F$ and $S$.
\end{thm}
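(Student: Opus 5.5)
The plan is to reduce everything to the expansion \eqref{eqn:LSRS} of $\Delta(\partial_F)$ together with Corollary~\ref{cor:LSfix}, and then to account for the left factor $\sigma$ using multiplicativity of $\Delta$.

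First, since $\Delta$ is a $\mathbb{Z}[\xl_n]$-algebra homomorphism and $\sigma$ is group-like, we have $\Delta(\sigma\partial_F)=\Delta(\sigma)\Delta(\partial_F)=(\sigma\otimes\sigma)\Delta(\partial_F)$. Fix a valid ordering $(\gamma_1,\ldots,\gamma_m)$ of $F$. The long-range Leibniz rule \eqref{eqn:longrangeleibniz}, multiplied out in the order of this ordering, gives \eqref{eqn:LSRS}:
$$\Delta(\partial_F)=\sum_{S\subset\{1,\ldots,m\}}L_S\otimes R_S.$$
We then identify subsets $S$ of $\{1,\ldots,m\}$ with subsets of $F$. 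One point needs care here: in $\DD_n\otimes_{\mathbb{Z}[\xl_n]}\DD_n$, products of tensors are computed factorwise. This is legitimate because the Kostant--Kumar coproduct lands in the subring where left multiplication by $\mathbb{Z}[\xl_n]$ is balanced appropriately. Concretely, we use that $\Delta$ is a ring map into the twisted tensor product, exactly as is implicitly used to derive \eqref{eqn:LSRS}.

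Second, we put each tensor factor in canonical form. For the right factor, $R_S=\partial_{F|_S}$ is already the operator of a sub-NAF, so we take $u^{(2)}_{F,S}=\idem$ and $G^{(2)}_{F,S}=F|_S$. A subset of a NAF has no crossing or bumping arcs, so it is a NAF by the remark following the definition, and the induced order is a linear extension of nesting. For the left factor, Corollary~\ref{cor:LSfix} gives $L_S=u\,\partial_{F'}$ for some $u\in S_n$ and NAF $F'$. Concretely, $u=\tau_{\gamma_{p_1}}\cdots\tau_{\gamma_{p_k}}$ is composed with the permutation produced by Theorem~\ref{thm:noncrossingequivtoNAF}. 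We set $u^{(1)}_{F,S}=u$ and $G^{(1)}_{F,S}=F'$. Multiplying by $\sigma\otimes\sigma$ then yields the stated form.

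The main obstacle is the justification of Corollary~\ref{cor:LSfix}. This is the iterated use of the lemma on $G_1^{\tau_\gamma}G_2$, with \eqref{eqn:conjrelabel} moving each $\tau_{\gamma_{p_i}}$ to the far left. The delicate point is that each conjugated root must stay positive, so that \eqref{eqn:conjrelabel} applies. This positivity is exactly what the support condition in that lemma guarantees: $\tau_\gamma$ acts as an order-preserving relabeling on the earlier arcs. So we proceed by induction on $k=|S|$, peeling off $\tau_{\gamma_{p_k}}$ first and applying the lemma to the prefix. Finally, we remark that the decomposition depends on the chosen valid ordering only through $S$, since all valid orderings of a NAF give the same $\partial_F$ and the same multiset of terms $L_S\otimes R_S$.
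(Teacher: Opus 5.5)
Your proposal is correct and follows essentially the same route as the paper. Like the paper, you reduce to $\sigma=\idem$ via $\Delta(\sigma)=\sigma\otimes\sigma$, expand $\Delta(\partial_F)$ by \eqref{eqn:LSRS}, take $R_S=\partial_{F|_S}$ with $u^{(2)}_{F,S}=\idem$, and rewrite $L_S=u\,\partial_{F'}$ by \Cref{cor:LSfix}.

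Your extra remarks are sound but go beyond what the paper's proof needs. These are the independence of $L_S\otimes R_S$ from the valid ordering, and your justification of \Cref{cor:LSfix} by moving $\tau_{\gamma_{p_k}}$ leftward first, conjugating the earlier reflections along the way, with intermediate forests that are merely noncrossing. The paper instead moves $\tau_{\gamma_{p_1}}$ out first.
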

\begin{proof}
    It suffices to prove this for $\sigma=1$ since $\Delta(\sigma)=\sigma\otimes \sigma$. We start with \eqref{eqn:LSRS}. The tensor factor $R_S=\partial_{F|_S}$ is already in the desired form, so we take $u_{F,S}^{(2)}=\idem$ and $G_{F,S}^{(2)}=F|_S$. We then rewrite $L_S=u_{F,S}^{(1)}\partial_{G_{F,S}^{(1)}}$ by applying \Cref{cor:LSfix}, setting $u_{F,S}^{(1)}=u$ and $G_{F,S}^{(1)}=F'$.
\end{proof}
\begin{cor}
\label{cor:productpositive}
    For any NAF $F$, the evaluation $\partial_F(\schub{w}\schub{w'})(\tl_{\sigma};\tl_n)$ is Graham-positive.
\end{cor}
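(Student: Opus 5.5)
The plan is to combine the coproduct formula of \Cref{thm:coproduct} with the Graham-positivity of \Cref{thm:upartialFpositive}, using evaluation at $\sigma$ to turn the product into a product of two separately positive evaluations.

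First I rewrite the evaluation as an operator. The evaluation $\ev_\sigma$ equals the operator $\sigma$ followed by $\ev_{\idem}$. Indeed, $\ev_\sigma f = f(\tl_\sigma;\tl_n) = \ev_{\idem}(\sigma\cdot f)$, where $\sigma$ acts by permuting the $x$-variables. Hence
$$\partial_F(\schub{w}\schub{w'})(\tl_\sigma;\tl_n)=\ev_{\idem}\bigl(\sigma\partial_F(\schub{w}\schub{w'})\bigr).$$

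Next I expand the product using the coproduct. By \Cref{thm:coproduct},
$$\Delta(\sigma\partial_F)=\sum_{S}\bigl(\sigma u^{(1)}_{F,S}\partial_{G^{(1)}_{F,S}}\bigr)\otimes\bigl(\sigma u^{(2)}_{F,S}\partial_{G^{(2)}_{F,S}}\bigr).$$
Since $\Delta$ encodes the action on products, this gives
$$\sigma\partial_F(fg)=\sum_S\bigl(\sigma u^{(1)}\partial_{G^{(1)}}f\bigr)\bigl(\sigma u^{(2)}\partial_{G^{(2)}}g\bigr).$$
Evaluation $\ev_{\idem}$ is a ring homomorphism, because it is the substitution $\xl_n\mapsto\tl_n$. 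Applying it to the previous display yields
$$\ev_\sigma\partial_F(\schub{w}\schub{w'})=\sum_S\ev_{\idem}\bigl(\sigma u^{(1)}\partial_{G^{(1)}}\schub{w}\bigr)\cdot\ev_{\idem}\bigl(\sigma u^{(2)}\partial_{G^{(2)}}\schub{w'}\bigr).$$

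Then I show each factor is Graham-positive. By \Cref{thm:upartialFpositive}, applied with the permutation $\sigma u^{(i)}$ and the NAF $G^{(i)}$, the operator $\sigma u^{(i)}\partial_{G^{(i)}}$ expands as $\sum_v f_v(\alpha)\partial_v$ with $f_v\in\mathbb{N}[\Phi^+]$. Applying $\ev_{\idem}$ to this expansion acting on $\schub{w}$ gives $\sum_v f_v(\tl_n)\,\ev_{\idem}\partial_v\schub{w}$. By the duality \eqref{eqn:KroneckerDual}, this equals $f_{w}(\tl_n)\in\mathbb{N}[t_2-t_1,\ldots,t_n-t_{n-1}]$.

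Some care is needed here: the coefficients $f_v$ are polynomials in the $x$-variables (through $\alpha_i=x_{i+1}-x_i$), and they multiply on the left. So $\ev_{\idem}$ sends $f_v(\xl)$ to $f_v(\tl)$ with $\alpha_i\mapsto t_{i+1}-t_i$, which preserves Graham-positivity.

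Finally, products and sums of Graham-positive polynomials are Graham-positive, so the whole sum is Graham-positive. The only point requiring care is this bookkeeping of the left multiplication by $x$-polynomial coefficients. Everything else is immediate from the earlier results. The identical argument, iterating $\Delta$ by coassociativity, handles $k$-fold products.
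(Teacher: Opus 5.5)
Your proof is correct and follows the paper's argument. You expand $\Delta(\sigma\partial_F)$ via Theorem~\ref{thm:coproduct}, use that $\ev_{\idem}$ is a ring homomorphism to split each summand into a product of two evaluations, and conclude by positivity of each factor. The only difference is that you rederive Corollary~\ref{cor:partialFevalpos} from Theorem~\ref{thm:upartialFpositive} via the duality \eqref{eqn:KroneckerDual}, where the paper simply cites it.
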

\begin{proof}
    Applying Theorem~\ref{thm:coproduct} shows
    $$\partial_F(\schub{w}\schub{w'})(\tl_{\sigma};\tl_n)=\sum_{S\subset F}\big(\partial_{G_{F,S}^{(1)}}\schub{w}\big)(\tl_{\sigma u_{F,S}^{(1)}};\tl_n)\cdot \big(\partial_{G_{F,S}^{(2)}}\schub{w'}\big)(\tl_{\sigma u_{F,S}^{(2)}};\tl_n).$$
    Each factor is Graham-positive by Corollary~\ref{cor:partialFevalpos}.
\end{proof}
The classes $\ev_\sigma\partial_F$ are far from linearly independent, so distinct summands in Theorem~\ref{thm:coproduct} may coincide or rewrite further. The fact that the coefficients are all $1$ in the expansion of the coproduct suggests there may be a multiplicity-free geometric degeneration that implies this result -- we leave this as an open question. The result shows that there is a sub-coalgebra
$$\operatorname{Span}_{\mathbb{Z}[\tl_n]}\{\ev_{\sigma}\partial_F \suchthat \sigma\in S_n\text{ and }F\text{ a NAF}\}\subset H_\bullet^T(\fl{n})$$
is an $S_n$-invariant sub-coalgebra with nonnegative integer structure constants relative to this spanning family. Taking $F=\emptyset$ shows it contains every fixed-point class $[\{\sigma\}]=\ev_\sigma$, and it contains the classes of all varieties considered in the geometric section, as well as their $S_n$-translates.
\begin{question}
\label{question:coalgebra}
    Is there a $\mathbb{Z}[\tl_n]$ basis for this coalgebra, and can we describe this span as a representation of $S_n$?
\end{question}

\appendix

\section{Diagrammatics}
\label{sec:decorated}
Let $\ropeik{i}{k}f=f(x_1,\ldots,x_{i-1},t_k,x_i,\ldots,x_{n-1};\tl)$, and define the operator $$\topeik{i}{k}f=\ropeik{i}{k}(\partial_{i}f)=\ropeik{i+1}{k}(\partial_i f),$$
where the second equality holds as $\partial_if$ is symmetric in $\{x_i,x_{i+1}\}$.
Say a composite $X_1\cdots X_n$ of $n$ of these operations is \emph{valid} if
\begin{enumerate}
    \item Each of the numbers $1,\ldots,n$ appears as the second subscript of an $X_i$ exactly once.
    \item $X_m$ is of the form $\ropeik{i}{k}$ for some $1\le i \le m$ or $\topeik{i}{k}$ for some $1\le i \le m-1$. In particular $X_1$ must be of the form $\ropeik{1}{k}$. 
\end{enumerate}
These operations satisfy the commutation relations
\begin{align*}
    \topeik{i}{k_1}\topeik{j}{k_2}=\topeik{j}{k_2}\topeik{i+1}{k_1}\text{ for }i>j\qquad \topeik{i}{k_1}\ropeik{j}{k_2}=\ropeik{j}{k_2}\topeik{i+1}{k_1}\text{ for }i\ge j\\
    \ropeik{i}{k_1}\topeik{j}{k_2}=\topeik{j}{k_2}\ropeik{i+1}{k_1}\text{ for }i>j\qquad \ropeik{i}{k_1}\ropeik{j}{k_2}=\ropeik{j}{k_2}\ropeik{i+1}{k_1}\text{ for }i\ge j.
\end{align*}
 These are the defining relations of the augmented Thompson monoid from \cite[Definition 3.7]{nst_c}, except the letters have been decorated with second subscripts.
We represent the operator $f\mapsto (\partial_F f)(\tl_{\sigma};\tl)$ via a decorated nested forest as follows.
\begin{thm}
\label{thm:validcomposite}
    Let $F$ be a noncrossing alternating forest on $[n]$, and let $\sigma\in S_n$.
    $f(\xl_n;\tl_n)\mapsto (\partial_F f)(\tl_{\sigma};\tl_n)$ can be expressed as a valid composite of these operations.
\end{thm}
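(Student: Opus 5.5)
The plan is to build the composite one variable at a time, mirroring the proof of \Cref{thm:degofsigmaYF}, but with full evaluation at $\tl_\sigma$ replacing $\ev_{\idem}$. First I would note that a composite of the shape $\ropeik{i_1}{k_1}\cdots$ in which every operator is an $\rope{}$ is just an evaluation: if each $X_m=\ropeik{i_m}{k_m}$ satisfies the positional condition, the composite substitutes $t_{k_m}$ into a definite slot. So for $F=\emptyset$ we can realize $\ev_\sigma$ by choosing $X_m=\ropeik{i_m}{\sigma(j_m)}$ appropriately. Concretely, insert the values $t_{\sigma(1)},\ldots,t_{\sigma(n)}$ into positions so that the final assignment is $x_p\mapsto t_{\sigma(p)}$. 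Reading the operators from the outermost (leftmost, applied last to the reduced polynomial) inward, $X_m$ inserts into a polynomial in $m$ remaining slots. The position $i_m\le m$ is then the relative position of the corresponding index among those not yet specialized, which is exactly the condition in (2).

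Next, I would induct on $n$ by peeling off the vertex $n$, or more conveniently a vertex that is extreme in the sense of the proof of \Cref{thm:degofsigmaYF}. There are two cases.

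\textbf{Isolated vertex.} If some vertex $p$ is isolated in $F$, we have $\partial_F f$ with $x_p$ untouched by the divided differences. Evaluating $x_p$ first gives $(\partial_F f)(\tl_\sigma)=(\partial_{F'}\ropeik{p}{\sigma(p)}f)(\tl_{\sigma'})$, where $F'$ is $F$ with vertex $p$ deleted and $\sigma'$ is the induced bijection. By induction the latter is a valid composite of $n-1$ operators, and appending $\ropeik{p}{\sigma(p)}$ on the right works, since the last operator may have $i\le n$.

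\textbf{No isolated vertex.} Otherwise some simple arc $(p,p+1)\in F$ is minimal, and $\partial_F=\partial_{F\setminus\gamma}\partial_p$. Here $\partial_p f$ is symmetric in $x_p,x_{p+1}$. The arc set $F\setminus\gamma$ has $p+1$ isolated and is a NAF, so the isolated-vertex case applies with the specialization $\ropeik{p+1}{\sigma(p+1)}$. Combining it with $\partial_p$ gives $\topeik{p}{\sigma(p+1)}$, or $\topeik{p}{\cdot}$ with the appropriate index. This uses the defining identity $\topeik{i}{k}=\ropeik{i}{k}\partial_i=\ropeik{i+1}{k}\partial_i$, and the constraint $i\le m-1$ matches $p\le n-1$.

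The main obstacle is that the arcs in $F$ are not all simple, so after deleting vertices the induced NAF on fewer vertices must retain the property that some simple arc or isolated vertex exists. Deleting a vertex turns an arc $(i,j)$ with $p$ strictly between $i$ and $j$ into a shorter arc after relabeling. One must check that this relabeling commutes with $\rope{}$. The key identity is $\partial_{F}\,\ropeik{p}{\cdot}=\ropeik{p}{\cdot}\,\partial_{F''}$, where $F''$ is the relabeled forest; this holds because divided differences in variables other than $x_p$ commute with substituting $x_p$, up to the order-preserving reindexing. I would state and prove this reindexing lemma first, then check that the result is again a NAF, since an order-preserving relabeling preserves noncrossing and nonbumping. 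With that lemma the recursion terminates: each step reduces $n$, and a nonempty NAF on a vertex set with no isolated vertex always contains a minimal arc which, after earlier deletions, has become simple.
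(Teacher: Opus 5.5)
Your proposal follows the paper's proof. You induct on $n$, handle an isolated vertex $p$ by pulling $\ropeik{p}{\sigma(p)}$ through $\partial_F$ with an order-preserving relabeling, and otherwise split off a nesting-minimal simple arc $\gamma=(p,p+1)$, merging $\partial_p$ with the specialization of the vertex that removing $\gamma$ isolates into a single $\mathsf{T}$.

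One step needs fixing. It is not true in general that $p+1$ is isolated in $F\setminus\gamma$: for $F=\{(1,3),(2,3)\}$ and $\gamma=(2,3)$, the vertex $3$ is still an endpoint of $(1,3)$. What is true is that $\gamma$ has a free endpoint. Any other arc at $p$ must be $(p,q)$ with $q>p+1$, and any other arc at $p+1$ must be $(q',p+1)$ with $q'<p$, since otherwise it would bump $\gamma$; and two such arcs would cross. So you must split into two cases, exactly as the paper does:
\begin{itemize}
\item if $p+1$ is free, specialize $x_{p+1}$ and use $\topeik{p}{\sigma(p+1)}=\ropeik{p+1}{\sigma(p+1)}\partial_p$;
\item if $p$ is free, specialize $x_p$ and use $\topeik{p}{\sigma(p)}=\ropeik{p}{\sigma(p)}\partial_p$.
\end{itemize}
Your hedge ``or $\mathsf{T}_{p,\cdot}$ with the appropriate index'' points at the second case, but the justification you give covers only the first.

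Also, the ``main obstacle'' you describe is not one. When no vertex is isolated, every nesting-minimal arc is automatically simple. A vertex strictly inside it would be an endpoint of some other arc, and that arc would have to either cross it or nest under it.
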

\begin{proof}
    First, suppose that there is some $i\in \{1,\ldots,n\}$ that is not an arc endpoint. Then if $F'$ is the NAF obtained by mapping the endpoints in $\{1,\ldots,n\}\setminus \{i\}$ to $\{1,\ldots,n-1\}$ via the order-increasing bijection, then 
    $$(\partial_{F}f)(\tl_{\sigma};\tl_n)=(\partial_{F'}(\rope{i,\sigma(i)}f))(t_{\sigma(1)},\ldots,\widehat{t_{\sigma(i)}},\ldots,t_{\sigma(n)};t_1,\ldots,\widehat{t_{\sigma(i)}},\ldots,t_n)$$
    and we conclude by applying the inductive hypothesis since $(\sigma(1),\ldots,\widehat{\sigma(i)},\ldots,\sigma(n))$ is a permutation of $(1,\ldots,\widehat{{\sigma(i)}},\ldots,n)$

    Otherwise, the last arc of $F$ is simple $\gamma=(i,i+1)$ for some $i\in \{1,\ldots,n-1\}$. Then 
    $$(\partial_Ff)(\tl_{\sigma};\tl_n)=\begin{cases}(\partial_{F'}(\tope{i,\sigma(i)}f)(t_{\sigma(1)},\ldots,\widehat{t_{\sigma(i)}},\ldots,t_{\sigma(n)};t_1,\ldots,\widehat{t_{\sigma(i)}},\ldots,t_n)&\gamma_L\text{ free}\\
    (\partial_{F''}(\tope{i,\sigma(i+1)}f)(t_{\sigma(1)},\ldots,\widehat{t_{\sigma(i+1)}},\ldots,t_{\sigma(n)};t_1,\ldots,\widehat{t_{\sigma(i+1)}},\ldots,t_n)&\gamma_R\text{ free,}\end{cases}$$
    where $F'$ is obtained from $F\setminus \gamma$ by mapping the endpoints $\{1,\ldots,n\}\setminus \{\gamma_L\}\to \{1,\ldots,n-1\}$, and $F''$ is obtained from $F\setminus \gamma$ by mapping the endpoints $\{1,\ldots,n\}\setminus \{\gamma_R\}\to \{1,\ldots,n-1\}$.
    We conclude in the same manner.
\end{proof}
\begin{eg}
    Take the NAF $F=\{(1,3),(1,2)\}$ for $S_3$ and $\sigma=231$ in one-line notation. Then
    $$(\partial_{(1,3),(1,2)}f)(t_2,t_3,t_1;t_1,t_2,t_3)=\partial_{(1,2)}(\tope{1,3}f)(t_2,t_1;t_1,t_2)=\partial_{\emptyset}(\tope{1,1}\tope{1,3}f)(t_2;t_2)=\rope{1,2}\tope{1,1}\tope{1,3}f.$$
    In the last two equalities because we had a choice on which endpoint was free we could have also expressed $\partial_{(1,2)}(\tope{1,3}f)(t_2,t_1;t_1,t_2)=\partial_{\emptyset}(\tope{1,2}\tope{1,3}f)(t_1;t_1)=\rope{1,1}\tope{1,2}\tope{1,3}f$.
\end{eg}


    


\begin{cor}\label{th:two_three_term_relations}
    For $a<b$ we have  
    \begin{align*}
    \topeik{i}{b}\ropeik{i+1}{a}&=\ropeik{i+1}{b}\topeik{i}{a}+\ropeik{i}{a}\topeik{i+1}{b}+(t_b-t_a)\topeik{i}{b}\topeik{i}{a}\\\topeik{i}{a}\ropeik{i+1}{b}&=\ropeik{i+1}{b}\topeik{i}{a}+\ropeik{i}{a}\topeik{i+1}{b}+(t_b-t_a)\topeik{i}{a}\topeik{i+1}{b}.
    \end{align*}
\end{cor}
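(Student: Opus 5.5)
The plan is to verify both identities by direct localization. Only positions $i,i+1,i+2$ of the $x$-variables are involved, so everything reduces to an identity among values of $f$ at finitely many points. All operators $\ropeik{j}{k}$, $\topeik{j}{k}$ with $j\in\{i,i+1\}$ touch only the $x$-variables in positions $i,i+1,i+2$ of the input. Variables before position $i$ are untouched, and those after position $i+2$ are shifted down by two, uniformly on both sides of each identity. So I will first reduce to the case $i=1$ with $f=f(x_1,x_2,x_3)$, treating all other variables as inert parameters. Composites are read as operators, with the rightmost factor applied first, so every term maps a polynomial in three $x$-variables to a polynomial in one remaining variable, which I call $y$.

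Next I will compute each of the four composites appearing in the first identity as an explicit rational combination of values of $f$. For example,
\[
\ropeik{2}{a}f=f(x_1,t_a,x_2),\qquad \topeik{1}{b}\ropeik{2}{a}f=\frac{f(t_b,t_a,y)-f(y,t_a,t_b)}{t_b-y}.
\]
Likewise $\ropeik{2}{b}\topeik{1}{a}f$ is obtained by first forming $\partial_1 f$, substituting $t_a$ in position $1$, and then substituting $t_b$ in position $2$ of the result. The term $\ropeik{1}{a}\topeik{2}{b}f$ is computed analogously, using $\partial_2$. The quadratic term $\topeik{1}{b}\topeik{1}{a}f$ is a second divided difference. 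Expanding it gives a sum over four evaluation points, with denominators drawn from $(t_a-x)$, $(t_b-y)$ and $(t_b-t_a)$, where $x$ denotes the intermediate variable; the factor $(t_b-t_a)$ will cancel against the prefactor $(t_b-t_a)$. Each side is then a linear combination of the six values $f(p)$, where $p$ runs over arrangements of $(t_a,t_b,y)$, with coefficients rational in $t_a,t_b,y$. I will compare the coefficients of each $f(p)$, which reduces the identity to a few elementary rational-function identities. The second identity is handled the same way, with the left side replaced by $\topeik{1}{a}\ropeik{2}{b}f$ and the last term by $(t_b-t_a)\topeik{1}{a}\topeik{2}{b}f$.

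As a cross-check, and as a more conceptual route, I will also derive the identities from Theorem~\ref{thm:Magic} with $\alpha=e_i-e_{i+1}$ and $\beta=e_{i+1}-e_{i+2}$, followed by evaluation. By Theorem~\ref{thm:validcomposite}, each $\topeik{}{}/\ropeik{}{}$-composite is an evaluation $\ev_\sigma\partial_{F}$ for a small NAF $F$ on $\{i,i+1,i+2\}$. In this translation, $\ev_\sigma\beta$ becomes $t_b-t_a$ (or its negative), and the $\tau$'s in Theorem~\ref{thm:Magic} become changes of $\sigma$. The two identities of Theorem~\ref{thm:Magic} should then match the two displayed relations: the first identity applies when $\beta$ evaluates to $+(t_b-t_a)$, and the second when it evaluates to $-(t_a-t_b)$. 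In both cases the coefficient comes out as $t_b-t_a$ with $a<b$.

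I expect the main obstacle to be bookkeeping rather than anything conceptual. One difficulty is tracking which position each $t_a$ or $t_b$ occupies after the index shifts caused by earlier substitutions. The other is keeping consistent which endpoint is treated as free, since $\topeik{i}{k}=\ropeik{i}{k}\partial_i=\ropeik{i+1}{k}\partial_i$. To handle both, I will fix the convention that the composite is applied right-to-left and record the point of evaluation explicitly at every stage. The sign of the quadratic term is where an error is most likely, so I will check it first by specializing $f=x_1$ and $f=x_1x_2$.
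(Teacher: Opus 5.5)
Your proposal is correct, but your primary route is not the paper's. The paper does no pointwise computation. It takes the two identities of Theorem~\ref{thm:Magic} with $\alpha=e_i-e_{i+1}$, $\beta=e_{i+1}-e_{i+2}$, namely
\[
\partial_{i,i+2}=s_i\partial_{i+1}+\partial_i+(x_{i+2}-x_{i+1})s_i\partial_{i+1}\partial_i,
\qquad
\partial_{i,i+2}=s_{i+1}s_i\partial_{i+1}+s_{i+1}\partial_i+(x_{i+1}-x_{i+2})s_{i+1}\partial_i\partial_{i+1}.
\]
It composes the first on the left with the substitution $\ropeik{i+1}{a}\ropeik{i+2}{b}$ and the second with $\ropeik{i+1}{b}\ropeik{i+2}{a}$. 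Each resulting term is then recognized as one of the displayed composites, using the symmetries of $\partial_i f$, $\partial_{i+1} f$ and $\partial_{i,i+2} f$. For the first identity these are:
\begin{itemize}
\item $\topeik{i}{b}\ropeik{i+1}{a}=\ropeik{i+1}{a}\ropeik{i+2}{b}\partial_{i,i+2}$;
\item $\ropeik{i+1}{a}\ropeik{i+2}{b}\partial_i=\ropeik{i+1}{b}\topeik{i}{a}$;
\item $\ropeik{i+1}{a}\ropeik{i+2}{b}s_i\partial_{i+1}=\ropeik{i}{a}\topeik{i+1}{b}$;
\item $\ropeik{i+1}{a}\ropeik{i+2}{b}s_i\partial_{i+1}\partial_i=\topeik{i}{b}\topeik{i}{a}$.
\end{itemize}
Under these substitutions the multiplier $\pm\beta$ becomes $t_b-t_a$.

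This is your ``cross-check.'' However, Theorem~\ref{thm:validcomposite} is not the right tool there. These two-letter composites are partial substitutions, not full valid composites $\ev_\sigma\partial_F$. What you actually need are the identifications just listed.

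Your localization route does go through. The reduction to $i=1$ is harmless, and comparing coefficients of the values $f(p)$ closes up. For example, in the first identity the coefficient of $f(t_a,y,t_b)$ vanishes because $(t_b-y)-(t_a-y)-(t_b-t_a)=0$.

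\textbf{Comparison.} Your route is self-contained and elementary, with no nil-Hecke manipulation. Its cost is that it hides the structural point the paper relies on: the relations are literal images of Theorem~\ref{thm:Magic}. That is why the appendix's diagrammatic straightening runs the same recursion as Theorem~\ref{thm:upartialFpositive}. It is also why the $+\beta$/$-\beta$ asymmetry gives the same positive coefficient $t_b-t_a$ in both relations, with $a<b$ used only for Graham-positivity.
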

\begin{proof}
    These identities follow from the identities on long-range divided differences \begin{align*}\partial_{i,i+2}=&s_i\partial_{i+1}+\partial_{i}+(x_{i+2}-x_{i+1})s_i\partial_{i+1}\partial_{i}\\\partial_{i,i+2}=&s_{i+1}s_i\partial_{i+1}+s_{i+1}\partial_{i}+(x_{i+1}-x_{i+2})s_{i+1}\partial_{i}\partial_{i+1},\end{align*}
    after applying $\ropeik{i+1}{a}\ropeik{i+2}{b}$ to the first identity and $\ropeik{i+1}{b}\ropeik{i+2}{a}$ to the second identity.
\end{proof}

We now explain how one can interpret our straightening rules diagrammatically. What follows is a decorated version of the calculus developed in \cite{NST_a}. The generators $\tope{i,k}$ and $\rope{i,k}$ are represented by the strand diagrams in Figure~\ref{fig:generators_strands}.

\begin{figure}[H]
    \centering
    \includegraphics[width=0.8\linewidth]{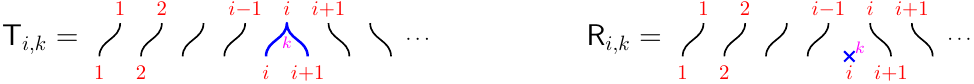}
    \caption{The strand diagrams that represent the generators}
    \label{fig:generators_strands}
\end{figure}
These elementary strand diagrams play the role of building blocks.
Any valid composite may be represented by stacking these strand diagrams one atop the other.
See Figure~\ref{fig:example_composite_strand} for the diagram representing the composite $\rope{1,2}\tope{1,1}\tope{1,3}$ of the preceding example.
\begin{figure}[H]
    \centering
    \includegraphics[width=0.2\linewidth]{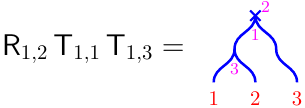}
    \caption{Strand diagram for a valid composite. The numbers in pink correspond to the second index of each operation, read from top to bottom.}
    \label{fig:example_composite_strand}
\end{figure}
\begin{fact}
The commutation relations imply that the composite operation is determined by the combinatorial data of the labeled nested forest structure.
\end{fact}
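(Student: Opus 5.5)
The plan is to show that a valid composite $X_1\cdots X_n$ is determined by a \emph{labeled nested forest}, and that any two composites with the same labeled forest are related by the four commutation relations. I would model this on the undecorated statement for the augmented Thompson monoid in \cite{nst_c,NST_a}, where normal forms of words in $\tope{i}$, $\rope{i}$ are indexed by (indexed) forests. The decoration should simply ride along, because each commutation relation moves a letter past another and keeps its second subscript. Concretely, read a valid composite top to bottom as a strand diagram. Each $\ropeik{i}{k}$ starts a new strand (a leaf/root node labeled $k$) at position $i$. Each $\topeik{i}{k}$ merges the strands at positions $i,i+1$ into one node labeled $k$. This produces a forest whose nodes carry distinct labels in $\{1,\ldots,n\}$, using condition (1) of validity, and whose planar left-to-right order of children is recorded.

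First I check that each of the four relations preserves this labeled planar forest. For instance, in $\topeik{i}{k_1}\topeik{j}{k_2}=\topeik{j}{k_2}\topeik{i+1}{k_1}$ with $i>j$, the two merges act on disjoint pairs of strands. The shift $i\mapsto i+1$ exactly compensates for the strand count dropping by one to the left after the $j$-merge. So the same two nodes, with the same labels and children, are produced. The $\rope{}$ cases are identical, with "insert a strand" in place of "merge". This gives the well-definedness direction: equivalent words have the same labeled forest.

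Next I show the converse: every valid word is equivalent to a canonical word determined only by the labeled forest. That suffices for the Fact, since the composite operation is then a function of the forest. I would define the canonical word by a fixed traversal, say processing nodes in order of the position of their rightmost leaf, breaking ties by depth. Then I would show by induction on word length that any valid word can be commuted into it. Pick the letter that the canonical order wants last. The strands it acts on are not touched by any later letter of the given word. Hence it can be commuted rightward past each later letter using one of the four relations, choosing among them by comparing indices ($i>j$ versus $i\ge j$). Afterward we delete it and recurse, and the remaining word is still valid with the induced labeled forest.

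The main obstacle is this last commutation step. I must verify that whenever two adjacent letters act on "independent" parts of the forest, their indices satisfy exactly the inequalities ($i>j$ for $\tope{}\tope{}$ and $\rope{}\tope{}$, $i\ge j$ for the others) that some relation covers. In particular, the boundary case $i=j$ must be handled, where an $\rope{}$ inserted immediately to the left of a merge is independent of it, and the asymmetric $\ge$ versus $>$ must match the diagram precisely. I would settle this by a case analysis on the relative positions of the strands touched, following the undecorated argument of \cite{NST_a} and noting that the labels never enter.
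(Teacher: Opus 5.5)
Your plan is essentially the paper's argument. The paper views a valid composite as a labeled nested forest together with a linear extension of the poset generated by the child and direct-nesting relations; your bubble-to-canonical-form induction simply proves that any two linear extensions are connected by adjacent swaps of incomparable letters, each of which is one of the four relations. One point to make explicit in your case analysis: ``independent'' must exclude nesting as well as shared strands. The adjacent pair $\topeik{j-1}{k_1}\ropeik{j}{k_2}$, where the strand of $\ropeik{j}{k_2}$ lies between the two strands of the merge, is the one configuration other than parent--child that the relations do not (and should not) cover. Your rightmost-leaf order respects it, provided it is taken in order of application with deeper nodes first.
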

\begin{proof}
    The only additional data of the stacked diagram is of a linear extension of the poset on $[n]$ determined by the covering relations $a\prec b$ if $a$ is the child of $b$, or $a$ is directly nested under $b$. Two linear extensions of a poset are related by swapping adjacent commuting elements, and the commutation relations enumerate all of these local commutation relations.
\end{proof}

\begin{eg}Figure~\ref{fig:isotopy_strands} shows one decorated nested forest presented as two different valid composites, its three components colour-coded.
The first subscripts change under such an isotopy---the merge decorated $4$ is $\topeik{2}{4}$ on the left and $\topeik{4}{4}$ on the right---while the decorations do not.

\begin{figure}[H]
    \centering
    \includegraphics[width=0.5\linewidth]{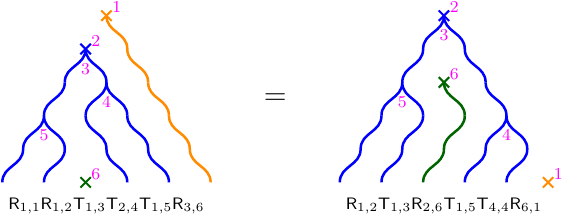}
    \caption{One decorated nested forest presented as two different valid composites}
    \label{fig:isotopy_strands}
\end{figure}
\end{eg}

This graphical perspective recasts the straightening rules in Corollary~\ref{th:two_three_term_relations} as shown in Figure~\ref{fig:magic_strands}.

\begin{figure}[H]
    \centering
    \includegraphics[scale=0.8]{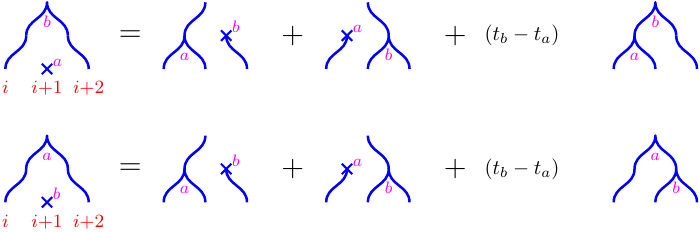}
    \caption{The Graham-positive three term straightening relations}
    \label{fig:magic_strands}
\end{figure}

\begin{eg}Figure~\ref{fig:big_boring_example} straightens the valid composite $\rope{1,6}\tope{1,1}\rope{1,2}\tope{2,5}\tope{2,3}\rope{3,4}$ to operations corresponding to non-nested forests. At each step, the two letters in red are those to which we apply the rules in Figure~\ref{fig:magic_strands}. The terms within the shaded boxes are those for which no further straightening is needed and thus represent final terms in the rewriting procedure.
\begin{figure}[!htbp]
    \includegraphics[scale=0.8]{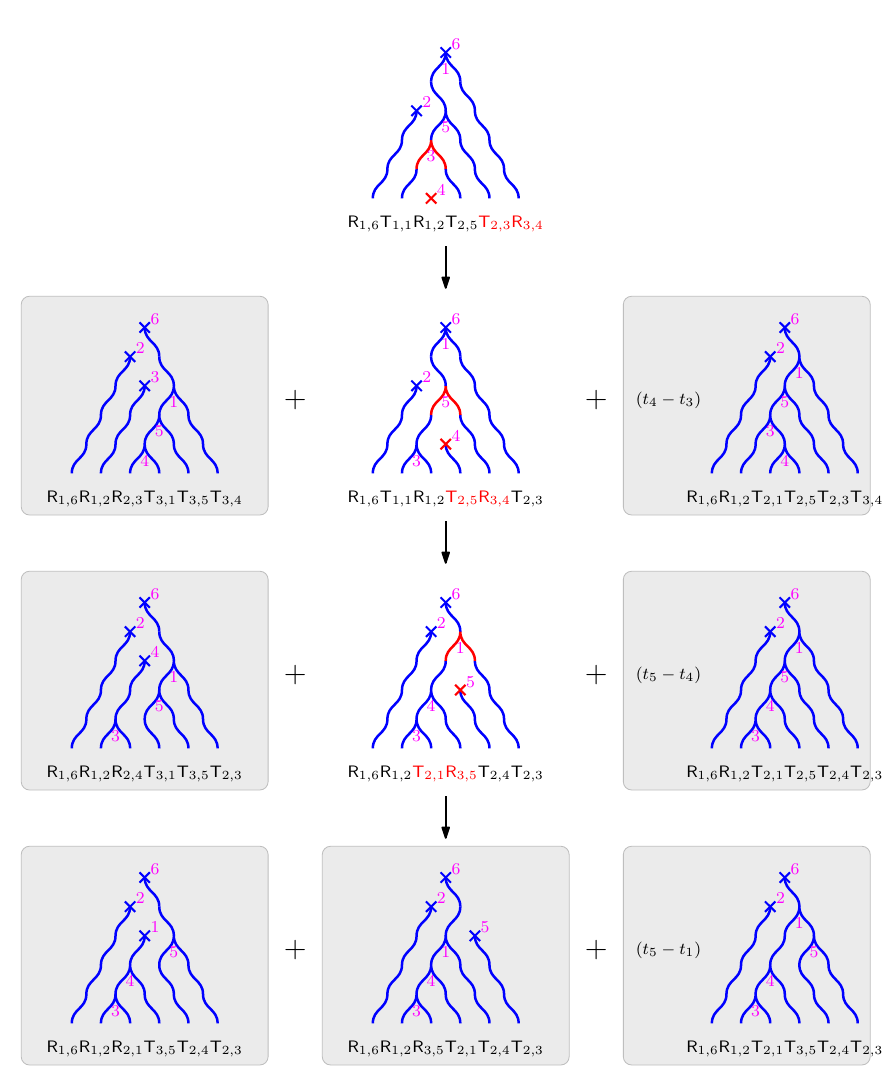}
    \caption{Straightening the valid composite $\rope{1,6}\tope{1,1}\rope{1,2}\tope{2,5}\tope{2,3}\rope{3,4}$ to non-nesting forests.}
    \label{fig:big_boring_example}
\end{figure}
\end{eg}
We can now reinterpret the algorithm from Theorem~\ref{thm:upartialFpositive} to expand $\ev_u\partial_F$ into a Graham-positive combination of $\ev_{\idem}\partial_w$ operations as was needed for Corollary~\ref{cor:partialFevalpos}. By \Cref{thm:validcomposite} we can interpret $\ev_u\partial_F$ as a valid composite of $\rope{i,k}$ and $\tope{i,k}$ operations. As described above, we can straighten the operation into a Graham-positive combination of operations associated to non-nested forests. Any non-empty forest has a node with children $i,i+1$ for some $i$, and applying the commutation relations we can assume $\tope{i,k}$ is the last operation in the valid composite. Replacing $\tope{i,k}$ with either $\rope{i,k}\partial_i$ or $\rope{i+1,k}\partial_i$ we can now iterate the straightening on the prefix before $\partial_i$. Eventually every term will be of the form $f(t_2-t_1,t_3-t_2,\ldots,t_n-t_{n-1})\rope{i_1,k_1}\cdots \rope{i_n,k_n}\partial_w$ where $f$ has nonnegative coefficients. Each $\rope{i_1,k_1}\cdots \rope{i_n,k_n}$ is an operation $\ev_w=\ev_{\idem}w$ for some $w\in S_n$, and we can apply AJS--Billey straightening to $w$ as in the proof of Theorem~\ref{th:ajs-billey} to conclude.

\bibliographystyle{hplain}
\bibliography{sample}
\end{document}